\theoremstyle{plain}
\newtheorem{thm}{Theorem}[section]
\newtheorem{cor}[thm]{Corollary}
\newtheorem{prop}[thm]{Proposition}
\newtheorem{lem}[thm]{Lemma}
\theoremstyle{definition}
\newtheorem{defn}[thm]{Definition}
\newtheorem{remark}[thm]{Remark}
\newtheorem*{claim*}{Claim}
\newcommand{\re}{\operatorname{Re}}
\newcommand{\im}{\operatorname{Im}}
\newcommand{\cc}{\mathbb{C}}
\renewcommand{\frac}[2]{\genfrac{}{}{0.6pt}{}{#1}{#2}}
\begin{document}

\title{Gromov hyperbolicity of pseudo-convex Levi corank one domains}
\author[B. Zhang]{Ben Zhang}

\address{Dept. of Mathematics, Northwest University, Xi'an, 710127, P.R. China}
\email{bzhang@nwu.edu.cn}

\subjclass[2010]{32F18, 32F45.}

\keywords{Kobayashi metric and distance, Gromov hyperbolicity, finite type.}


\begin{abstract}
After a study of the Kobayashi metrics on certain scaled domains, we show the stabilities of the infinitesimal Kobayashi metrics and the integrated distances in different scaling processes. As an application, we prove that bounded pseudo-convex domains of finite type where the Levi form of every boundary point has corank one are Gromov hyperbolic with respect to the Kobayashi metric. The results in this note generalize Zimmer's and Fiacchi's related works on Gromov hyperbolicity of weakly pseudo-convex domains.
\end{abstract}

\date{\today}
\maketitle{}


\section{Introduction}

In this note we study the Gromov hyperbolicity of a class of weakly pseudo-convex domains endowed with the Kobayashi metric. Loosely speaking, the Gromov hyperbolic metric spaces can be regarded as generalizations of negatively curved metric spaces and this notion has been used to study the extensions of the biholomorphisms and establish Denjoy-Wolff theorems on pseudo-convex domains.

The strongly pseudo-convex domains \cite{Balogh2000strongly}, complex ellipsoids in $\mathbb{C}^{d+1}$ \cite{gausser2018gromov}, smoothly bounded convex domains of finite type \cite{zimmer2016convexmathann}, and certain $\mathbb{C}$-proper weighted homogenous models \cite{zimmer2017Cconvex} are Gromov hyperbolic with respect to the Kobayashi metric. Recently, Fiacchi proved that smoothly bounded pseudo-convex domains of finite type in $\mathbb{C}^2$ are Gromov hyperbolic \cite{Fiacchi2020Gromov}.

In the present paper we shall explore the Gromov hyperbolicity of bounded pseudo-convex domains of finite type whose Levi form has corank one. These domains can be seen as generalizations of pseudo-convex of finite type domains in $\mathbb{C}^2$ and the complex ellipsoids appearing in \cite{gausser2018gromov}. Such domains have been studied by many authors, see \cite{bedford1991noncompactautomorphism}, \cite{ThaiandThu2009noncompact}, \cite{verma2015corankone} for instance. Motivated by Zimmer's work on convex domains of finite type \cite{zimmer2016convexmathann} and Fiacchi's strategy in \cite{Fiacchi2020Gromov}, we shall give the following theorem (here $d_\Omega$ is the Kobayashi distance):

\begin{thm}\label{thm0}
Let $\Omega\subset\mathbb{C}^{d+1}$ be a bounded pseudo-convex domain of finite type whose boundary is smooth of class $\mathcal{C}^{\infty}$, and suppose that the Levi form has rank at least $d-1$ at each point of the boundary. Then $(\Omega,d_\Omega)$ is Gromov hyperbolic.
\end{thm}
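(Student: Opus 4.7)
The plan is to establish Gromov hyperbolicity via a boundary-visibility criterion applied to $(\Omega, d_\Omega)$. Concretely, by a Bharali--Zimmer type criterion it suffices to show that for every pair of distinct boundary points $p, q \in \partial\Omega$ and every pair of sequences $x_n \to p$, $y_n \to q$ in $\Omega$, the Gromov product $(x_n \mid y_n)_o$ stays bounded as $n \to \infty$, where $o \in \Omega$ is a fixed base point. Equivalently, no almost-minimizing geodesic joining $x_n$ to $y_n$ can escape every compact subset of $\Omega$. I would argue by contradiction, assuming such an escaping family $\sigma_n$ exists and (after passing to a subsequence) concentrates its escaping behaviour near some boundary point $p$.

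The core mechanism is a non-isotropic scaling at $p$. For boundary points of finite type with Levi corank one, Catlin's and Bedford--Pinchuk's polydisc theory provides distinguished coordinates in which $\Omega$ is approximated by a weighted-homogeneous polynomial model
\[
M_p = \Bigl\{ z \in \mathbb{C}^{d+1} : \re(z_0) + P(z_1, \bar z_1) + \sum_{j=2}^{d} |z_j|^2 < 0 \Bigr\},
\]
where $P$ is a real subharmonic polynomial of degree $2m$ (the $1$-type of $p$) with no pluriharmonic terms. Applying non-isotropic dilations adapted to $p$ produces scaled copies $\Omega_n$ of $\Omega$ converging to $M_p$ locally in the Hausdorff sense; the stability results of the present note (for the infinitesimal Kobayashi metric and the integrated distance under scaling) then give $k_{\Omega_n} \to k_{M_p}$ and $d_{\Omega_n} \to d_{M_p}$ uniformly on compact subsets of $M_p$. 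Since $M_p$ is a $\mathbb{C}$-proper weighted-homogeneous model, Zimmer's theorem in \cite{zimmer2017Cconvex} asserts that $(M_p, d_{M_p})$ is itself Gromov hyperbolic, and in particular has the visibility property.

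To close the contradiction, I would rescale the escaping segments $\sigma_n$ in the coordinates adapted to $p$ to obtain quasi-geodesics $\tilde\sigma_n$ in the scaled domains $\Omega_n$. Using the metric convergence, a subsequence of $\tilde\sigma_n$ would converge to a quasi-geodesic $\sigma_\infty$ in $M_p$ whose endpoints lie at two distinct ideal boundary points of $M_p$ and which leaves every compact subset. This contradicts visibility in $M_p$, and the theorem follows.

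The main obstacle, and where I expect the bulk of the technical work, is verifying that the scaling procedure produces a \emph{non-degenerate} limit quasi-geodesic in $M_p$: the rescaled endpoints of $\sigma_n$ must converge to distinct ideal points, and the metric convergence must be strong enough that $\tilde\sigma_n$ neither collapses nor drifts to infinity in an uncontrolled way. This is exactly the role of the preparatory stability theorems for $k_{\Omega_n}$ and $d_{\Omega_n}$ announced in the abstract. A secondary difficulty is arranging the Catlin-type coordinates and the scaling data to depend uniformly on the base point $p$ in a neighborhood of $\partial\Omega$, so that scalings centered at nearby boundary points can be compared; this is standard but requires care since the $1$-type may in principle jump and the corank-one hypothesis must be used to keep the normal direction distinguished throughout.
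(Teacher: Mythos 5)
There are two genuine gaps in your plan. First, the criterion you start from is not valid: boundedness of the Gromov products $(x_n\mid y_n)_o$ for sequences tending to distinct boundary points is exactly the Bharali--Zimmer visibility property, and visibility does \emph{not} imply Gromov hyperbolicity. In fact, for the domains of Theorem~\ref{thm0} visibility is already known and free of charge, since smoothly bounded pseudo-convex domains of finite type are Goldilocks domains (this is Theorem~\ref{thm6} in the paper, quoted from \cite{zimmer2017goldilocks}); if your criterion were correct, the theorem would follow immediately with no scaling at all. Goldilocks domains of infinite type furnish visibility domains that fail to be Gromov hyperbolic (by Zimmer's finite-type characterization for convex domains), so no such implication can hold. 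The paper instead argues directly on the failure of the $\delta$-thin-triangle condition: it takes triangles $\sigma_{x_n,y_n},\sigma_{x_n,z_n},\sigma_{y_n,z_n}$ with $d_\Omega(u_n,\sigma_{x_n,z_n}\cup\sigma_{y_n,z_n})\geq n$, and uses visibility of $\Omega$ only as one ingredient, together with the fact that Kobayashi geodesic lines in $\Omega$ and in the rescaled limit domain land at two \emph{distinct} boundary points (Proposition~\ref{prop1} and Corollary~\ref{prop7}).

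Second, your identification of the scaled limit as a $\mathbb{C}$-proper weighted-homogeneous model, and the ensuing appeal to \cite{zimmer2017Cconvex} for Gromov hyperbolicity of $M_p$, is not justified. The limit produced by the non-isotropic scaling at a corank-one point of finite type is $\{\re(z_0)+P(z_1,\overline z_1)+\sum_{j\geq2}|z_j|^2<0\}$ with $P$ subharmonic without harmonic terms but in general \emph{not} homogeneous (its degree $2m'$ may even be smaller than the type $2m$), so it is not covered by the weighted-homogeneous results; it is not even clear a priori that the Catlin-type metric is comparable to the Kobayashi metric on a generic domain of this form (Remark~4.7), and the paper only gets that comparison on $\widehat\Omega$ because it is inherited through the scaling from Cho's estimate on $\Omega$. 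The paper never needs, and never asserts, Gromov hyperbolicity of the limit model; what it proves is the weaker statement that geodesics in $\widehat\Omega$ are well behaved, and precisely because $P$ is non-homogeneous this requires a second scaling at infinity (Section~\ref{section7}), with its own stability theorems, to reduce the behaviour at $\infty$ to the homogeneous top-degree model $\widetilde\Omega$. So the two pillars of your outline --- the sufficiency of the visibility/Gromov-product condition and the hyperbolicity of the model domain --- both fail as stated, and repairing them essentially forces the structure of the paper's argument.
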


The main technique we used here is the scaling method (cf, \cite{kim2011book} and \cite{krantz2008scaling}), and the key step is to show the stabilities of the Kobayashi metrics and the integrated distances in different scaling processes. The paper is arranged as follows:

In Section~\ref{section2} we collect some basic facts about the Kobayashi metric and the Gromov hyperbolicity. In Section~\ref{section3} we recall local geometry near boundary points where the Levi form has corank one. In Section~\ref{section4} we take the scaling procedure and show the stability of the Kobayashi metrics. Particularly we give global estimates of the Kobayashi metrics on the limit domains. It should be noted here that the stability of the infinitesimal Kobayashi metrics does not always imply the stability of the integrated Kobayashi distances, thus in Section~\ref{section5}, we examine the stability of the Kobayashi distances under the scaling processes. The basic point is that the limit domains are complete hyperbolic and certain Kobayashi balls are compactly contained in the limit domains \cite{verma2015corankone}. In Section~\ref{section6}, we study the boundary properties of the geodesics (quasi geodesics) in the limit domains. In Section~\ref{section8}, we give the properties of Gromov products by utilizing the Catlin metric. As the limit domains are unbounded, in order to understand the asymptotic behaviors of the geodesics at infinity, we need to scale the limit domains again. This procedure is different from the scaling process in Section~\ref{section4}, thus we need to establish the stability theorems again, and this will be done in Section~\ref{section7}. In the final section, we shall give the proof of Theorem~\ref{thm0}.

\section{Preliminaries}\label{section2}

\subsection{Gromov hyperbolic spaces}
First we give some basic definitions of the Gromove hyperbolic spaces. Let $(X,d)$ be a metric space. A curve $\sigma\colon[a,b]\to X$ is a \textit{geodesic} if $d(\sigma(t_1),\sigma(t_2))=|t_1-t_2|$ for all $t_1,t_2\in[a,b]$. A \textit{geodesic triangle} in $(X,d)$ is a choice of three points in $X$ and geodesic segments connecting these points. A metric space is called \textit{proper} if the closed balls are compact.

\begin{defn}
A proper geodesic metric space $(X,d)$ is called $\delta$-\textit{hyperbolic} if every geodesic triangle is $\delta$-thin. If $(X,d)$ is $\delta$-hyperbolic for some $\delta\geq0$ then $(X,d)$ is called \textit{Gromov hyperbolic}.
\end{defn}

For every $x,y,o\in X$, the \textit{Gromov product} is defined by
\[
(x|y)_o:=\frac{1}{2}(d(x,o)+d(y,o)-d(x,y)).
\]
For $A\geq1, B\geq0$, a curve $\sigma\colon[a,b]\to X$ is an $(A,B)$-\textit{quasi geodesic} if
\[
A^{-1}|t_1-t_2|-B\leq d(\sigma(t_1),\sigma(t_2))\leq A|t_1-t_2|+B
\]
for all $t_1,t_2\in[a,b]$. For more materials about Gromov hyperbolicity, we refer the reader to \cite{Bridson1999book}.

\subsection{Kobayashi metrics and distances}Given a domain $\Omega\subset\mathbb{C}^{d+1}$, for $(p,X)\in\Omega\times\mathbb{C}^{d+1}$, the Kobayashi pseudo-metric is defined by
\[
F_{\Omega}(p;X)=\inf\big\{|\lambda| : f\colon\Delta\to\Omega\text{~~holomorphic~~}, f(0)=p,df_{|0}\lambda=X\big\}.
\]
If for all $p\in\Omega$ and $X\in\mathbb{C}^{d+1}$, $X\neq0$ implies $F_\Omega(p;X)>0$, we say that $\Omega$ is \textit{Kobayashi hyperbolic} and thus $F_\Omega$ is a metric. For $p,q\in\Omega$, define
\[
d_\Omega(p,q)=\inf_{\gamma}\bigg\{\int^1_0F_\Omega(\gamma(t);\gamma'(t))dt\bigg\}
\]
where the infimum takeing over all piecewise $\mathcal{C}^1$ curve $\gamma\colon[0,1]\to\Omega$ with $\gamma(0)=p$, $\gamma(1)=q$. If $F_\Omega$ is a metric, then $d_{\Omega}$ is a distance on $\Omega$. For more properties of the Kobayashi metric and distance, we refer the reader to \cite{kobayashi1998bookhyperbolic} or \cite{abate1990booktaut}.

\section{Special coordinates and polydiscs}\label{section3}

Let $\Omega\subset\mathbb{C}^{d+1}$ be a pseudo-convex domain with smooth defining function $r$. Let $u_\infty\in\partial\Omega$ be a point of finite type $2m$ in the sense of D'Angelo, and suppose the Levi form of $\partial\Omega$ has at least $d-1$ positive eigenvalues at $u_\infty$. We may assume that $|\partial r/\partial z_0(x)|\geq c>0$ for all $x$ in a neighborhood $W$ of $u_\infty$ and $\partial\overline{\partial}r(x)[L^r_i,\overline{L_j^r}]_{2\leq i,j\leq d}$ has $(d-1)$-positive eigenvalues in $W$ where
\[
L_0^r=\frac{\partial}{\partial z_0},\quad L_j^r=\frac{\partial}{\partial z_j}-\bigg(\frac{\partial r}{\partial z_0}\bigg)^{-1}\frac{\partial r}{\partial z_j}\frac{\partial }{\partial z_0},\quad j=1,\ldots,d.
\]
The set $\{L_1^r,\ldots,L_d^r\}$ forms a basis of the complex tangent space of $\partial\Omega$ near $u_\infty$. Moreover, we have
\[
\partial\overline{\partial} r(u_\infty)[L_i^r,\overline{L_j^r}\,]=\delta_{ij},\quad 2\leq i,j\leq d,
\]
where $\delta_{ij}=1$ if $i=j$ and $\delta_{ij}=0$ otherwise.

For any integers $j,k>0$, for $x\in W$, set
\[
\mathcal{L}_{j,k}^r\partial\overline{\partial} r(x)=\underbrace{L_1^r\ldots L_1^r}_{(j-1)\text{times}}\underbrace{\overline{L_1^r}\cdots\overline{L_1^r}}_{(k-1)\text{times}}
\partial\overline{\partial} r(x)[L_1^r,\overline{L_1^r}\,],
\]
and define
\[
C_\ell^r(x)=\max\{|\mathcal{L}_{j,k}^r\partial\overline{\partial}r(x)| : j+k=\ell\}.
\]
Then we have the following estimate of the Kobayashi metric \cite[Theorem~1.]{cho1995corankone}:
\begin{thm}\label{thm3}
Let $\Omega$ be a smoothly bounded pseudoconvex domain in $\mathbb{C}^{d+1}$ and let $u_\infty\in\partial\Omega$ be a point of finite type $2m$ in the sense of D'Angelo. Also assume that the Levi form $\partial\overline{\partial}r(z)$ of $\partial\Omega$ has at least $(d-1)$ positive eigenvalues at $u_\infty$. Then there exist a neighborhood $W$ about $u_\infty$ and a positive constant $A\geq1$ such that for $X=b_1L_0^r+b_1L_1^r+\cdots+b_dL_d^r$ at $z\in \Omega\cap W$,
\[
A^{-1}M_r(z;X)\leq F_\Omega(z;X)\leq AM_r(z;X),
\]
where
\[
M_r(z;X):=\frac{|b_0|}{|r(z)|}+|b_1|\sum^{2m}_{\ell=2}\bigg|\frac{C_\ell^r(z)}{r(z)}\bigg|^{1/\ell}+\sum^d_{k=2}
\frac{|b_k|}{\sqrt{|r(z)|}}.
\]
\end{thm}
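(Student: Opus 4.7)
The strategy is the classical one from the complex analytic study of finite type points: one analyses $F_\Omega(z;X)$ direction by direction using the decomposition $X=b_0L_0^r+b_1L_1^r+\cdots+b_dL_d^r$ and constructs, in each direction separately, an appropriate holomorphic disc (for the upper bound) and a bounded plurisubharmonic bump (for the lower bound) matched to the anisotropic scales
\[
\tau_0(z)=|r(z)|,\qquad \tau_k(z)=\sqrt{|r(z)|}\;(2\leq k\leq d),\qquad \tau_1(z)=\bigg(\sum_{\ell=2}^{2m}\bigg|\frac{C_\ell^r(z)}{r(z)}\bigg|^{1/\ell}\bigg)^{-1}.
\]
These are exactly the reciprocals of the three groups of terms appearing in $M_r(z;X)$, and the decomposition reflects the fact that $\partial\overline\partial r$ is non-degenerate along $L_2^r,\ldots,L_d^r$, while the D'Angelo finite type $2m$ hypothesis at $u_\infty$ guarantees that some iterated Levi coefficient $\mathcal{L}_{j,k}^r\partial\overline\partial r(u_\infty)$ with $j+k\leq 2m$ is non-zero, so that $\tau_1(z)$ is finite and uniformly bounded.

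For the upper bound $F_\Omega(z;X)\leq AM_r(z;X)$ I would first pass to special holomorphic coordinates centered at the normal projection of $z$ to $\partial\Omega$ in which $L_0^r,\ldots,L_d^r$ are approximately $\partial/\partial w_0,\ldots,\partial/\partial w_d$, and then exhibit the polynomial polydisc $\mathrm{P}(z,\tau)$ with radii $(\tau_0,\tau_1,\ldots,\tau_d)$. Using the Taylor expansion of $r$ in these coordinates, the definition of $C_\ell^r$, and the pseudo-convexity of $\Omega$, one shows that $\mathrm{P}(z,\tau)\subset\Omega$ up to a uniform rescaling. Composing the inclusion $\mathrm{P}(z,\tau)\hookrightarrow\Omega$ with a disc in $\mathrm{P}(z,\tau)$ tangent to $X$ at the origin then yields a competitor for the Kobayashi infimum. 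For the lower bound I would use a Sibony-type bounded plurisubharmonic function $\psi$ on $W\cap\Omega$ whose complex Hessian at $z$ satisfies $\partial\overline\partial\psi(z)[X,\overline X]\geq c\,M_r(z;X)^2$: along $L_0^r$ this is provided by $\log(-r)$; along $L_k^r$ ($k\geq 2$) by a standard bump of the form $-|z-z_0|^2-Cr$; along $L_1^r$ one builds a weighted plurisubharmonic polynomial in the tangential coordinate whose Hessian at $z$ is comparable to $\sum_\ell |C_\ell^r(z)/r(z)|^{2/\ell}$. Sibony's estimate $F_\Omega(z;X)\geq c'\sqrt{\partial\overline\partial\psi(z)[X,\overline X]}$ then closes the circle.

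The main obstacle is the construction in the degenerate tangential direction $L_1^r$: one must verify, uniformly in $z\in W\cap\Omega$, both that the polydisc of radius $\tau_1(z)$ fits inside $\Omega$ and that it supports a plurisubharmonic bump with Hessian of the stated size. The key technical input is that, in the adapted coordinates, every non-pluriharmonic monomial of weighted degree $\leq 2m$ appearing in the Taylor expansion of $r$ at the base point is dominated, up to a constant depending only on $u_\infty$, by some $C_\ell^r$; conversely the $C_\ell^r$ themselves are controlled by the Taylor coefficients. This uniform two-sided comparison is exactly where the D'Angelo finite type hypothesis is essential, and once it is in place the matching estimates from the two previous paragraphs combine to give $A^{-1}M_r(z;X)\leq F_\Omega(z;X)\leq AM_r(z;X)$ on all of $\Omega\cap W$.
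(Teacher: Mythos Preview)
The paper does not prove this theorem at all: it is quoted verbatim as \cite[Theorem~1.]{cho1995corankone} and used as a black box throughout. There is therefore no ``paper's own proof'' to compare against; the result is imported from Cho's work.

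Your outline is the standard Catlin--Cho strategy and is, at the level of a sketch, the correct one: anisotropic polydisc inclusions $Q_\delta(z)\subset\Omega$ for the upper bound, and Sibony-type bounded plurisubharmonic weights for the lower bound, with the three scales $\tau_0,\tau_1,\tau_k$ matching the three groups of terms in $M_r$. Two points deserve emphasis if you intend to turn this into an actual proof. First, the polydisc you want is not quite $\mathrm{P}(z,\tau)$ with $\tau_1$ as you wrote it, but the $Q_\delta(z)=\Phi_z^{-1}(R_\delta(z))$ built from Cho's special coordinates $\Phi_z$; the inclusion $Q_{c\delta}(z)\subset\Omega$ for $\delta=|r(z)|$ is exactly \cite[Lemma~2.5]{cho1994levirankn-2}, and the equivalence $\tau(z,\delta)\simeq\eta(z,\delta)$ you implicitly use is \cite[Proposition~2.7]{cho1994levirankn-2}. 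Second, the hard part of the lower bound is not the Sibony inequality itself but the construction of a bounded plurisubharmonic function on $\Omega\cap W$ whose Hessian in the $L_1^r$ direction is at least $c\sum_\ell |C_\ell^r(z)/r(z)|^{2/\ell}$ \emph{uniformly} in $z$; this requires Catlin's subelliptic multiplier machinery (or Cho's adaptation of it to the corank-one setting), and your phrase ``a weighted plurisubharmonic polynomial in the tangential coordinate'' hides several pages of estimates. The sketch is sound as an architecture, but the $L_1^r$ bump is the entire content of the theorem and cannot be dispatched in a sentence.
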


Now we show that about each point $x\in W$, there is a special coordinate system and we use this coordinates to define certain ``distorted" polydiscs which are intrinsically attached near the boundary points. These polydiscs are closely related to the local geometry of the boundary. Under the hypotheses that $u_\infty\in\partial\Omega$, $|\partial r/\partial z_0(x)|\geq c>0$ for all $x\in W$, and $\partial\overline{\partial}r(x)[L^r_i,\overline{L_j^r}]_{2\leq i,j\leq d}$ has $(d-1)$-positive eigenvalues in $W$, we have the following:

\begin{prop}[{\cite[Proposition~2.2]{cho1994levirankn-2}}]
For each $x\in W$ and positive integer $2m$, there is a biholomorphism $\Phi_{x}\colon\mathbb{C}^{d+1}\to\mathbb{C}^{d+1}$, $\Phi_{x}(x)=0$ such that
\begin{align*}
r\circ\Phi_{x}^{-1}(\zeta)=&r(x)+\re(\zeta_0)+\sum^d_{\alpha=2}\sum_{\substack{j+k\leq m\\j,k>0}}\re\big(b^\alpha_{j,k}(x)\zeta^j_1\overline{\zeta}^k_1\zeta_\alpha\big)\notag\\
&+\sum_{\substack{j+k\leq 2m\\j,k>0}}a_{j,k}(x)\zeta_1^j\overline{\zeta}_1^k+\sum^d_{\alpha=2}|\zeta_\alpha|^2+R(x;\zeta)
\end{align*}
where $R(x;\zeta)$ is the error term.
\end{prop}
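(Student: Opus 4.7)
The plan is to construct $\Phi_x$ as a composition of four explicit polynomial biholomorphisms that successively normalize the Taylor expansion of $r$ at the reference point $x\in W$. First I would translate $x$ to the origin and apply a unitary change of coordinates so that (using $|\partial r/\partial z_0|\ge c>0$) the real normal direction to $\partial\Omega$ is aligned with $\re(z_0)$ and the complex tangent space at $x$ is identified with $\{z_0=0\}$, with the degenerate direction of the Levi form corresponding to $z_1$ and its positive-definite $(d-1)$-dimensional part to $z_2,\ldots,z_d$. After this preparatory step, the Taylor expansion of $r$ reads $r(x)+\re(z_0)+(\text{higher order})$ with no linear tangent contribution.

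Second, I would apply a triangular polynomial change of variables $\zeta_0 = z_0 + Q(z_1,\ldots,z_d)$, $\zeta_j = z_j$ for $j\ge 1$, where $Q$ is twice the holomorphic tangent-variable part of the Taylor polynomial of $r(x)-r$ truncated at order $2m$. Because $r$ is real-valued, this absorbs into $\re(\zeta_0)$ every contribution that is pluriharmonic in the tangent variables alone; in particular it removes all purely holomorphic (and antiholomorphic) tangent monomials. Third, a linear change in the $(\zeta_1,\ldots,\zeta_d)$-coordinates, chosen so that the Hermitian form $\partial\overline{\partial}r(x)[L_i^r,\overline{L_j^r}\,]_{2\le i,j\le d}$ becomes the identity and the weakly pseudoconvex direction stays aligned with $\zeta_1$, produces the term $\sum_{\alpha=2}^{d}|\zeta_\alpha|^2$.

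Fourth, polynomial shears $\zeta_\alpha\mapsto\zeta_\alpha+p_\alpha(\zeta_1,\ldots,\zeta_d)$ in each $\zeta_\alpha$ with $\alpha\ge 2$ are chosen to absorb into $|\zeta_\alpha|^2$ every cross monomial of the form $\zeta_1^j\overline{\zeta}_1^k\zeta_\alpha^p\overline{\zeta}_\alpha^q$ that is purely holomorphic or purely antiholomorphic in $\zeta_\alpha$; this amounts to completing the square in $|\zeta_\alpha+p_\alpha|^2$. The only obstructions to removal are precisely the monomials $\re(b^\alpha_{j,k}(x)\zeta_1^j\overline{\zeta}_1^k\zeta_\alpha)$ with $j,k>0$ and $j+k\le m$; everything of higher bidegree is swept into $R(x;\zeta)$. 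The surviving purely $(\zeta_1,\overline{\zeta}_1)$-dependent part is then the finite polynomial $\sum_{\substack{j+k\le 2m\\ j,k>0}}a_{j,k}(x)\zeta_1^j\overline{\zeta}_1^k$, with the truncation at $j+k\le 2m$ justified by the finite type $2m$ of $u_\infty$ in the sense of D'Angelo and the remainder collected into $R$.

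The principal obstacle is the combinatorial bookkeeping in the last step: one must verify exactly which monomials can be eliminated by a holomorphic shear in $\zeta_\alpha$, and confirm that the residual terms of shape $\re(b^\alpha_{j,k}(x)\zeta_1^j\overline{\zeta}_1^k\zeta_\alpha)$ with $j+k\le m$ are indeed the sharp irreducible obstruction, so that no further normalization is possible without destroying the form already achieved. The corank-one hypothesis is used essentially here: with a single degenerate direction $\zeta_1$ the normalization in each strongly pseudoconvex variable $\zeta_\alpha$ can be carried out independently of the others, reducing the entire problem to a one-dimensional polynomial normal form along the weakly pseudoconvex $\zeta_1$-axis, which is what makes the explicit shape stated in the proposition possible.
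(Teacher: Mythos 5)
This proposition is not proved in the paper at all: it is quoted from Cho \cite[Proposition~2.2]{cho1994levirankn-2}, and your sketch follows essentially the same route as Cho's construction — translation and unitary alignment at $x$, a holomorphic shear in the normal variable absorbing all pluriharmonic tangential terms, diagonalization of the nondegenerate part of the Levi form to produce $\sum_{\alpha\geq2}|\zeta_\alpha|^2$, and holomorphic shears in $\zeta_2,\ldots,\zeta_d$ removing every term linear in $\zeta_\alpha$ whose $\zeta_1$-coefficient is holomorphic or antiholomorphic, leaving precisely the mixed obstructions $\re\big(b^\alpha_{j,k}(x)\zeta_1^j\overline{\zeta}_1^k\zeta_\alpha\big)$ with $j,k>0$. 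Two caveats you should be aware of: the shears feed back into lower terms (for instance $|p_\alpha(\zeta_1)|^2$ modifies the coefficients $a_{j,k}(x)$, and the $\zeta_0$-shear must be revisited), so the elimination has to be organized degree by degree rather than in one pass; and, as stated with an unspecified remainder, the display is almost tautological — the real content of Cho's proposition is the uniform control, as $x$ ranges over $W$, of the coefficients and of the error $R(x;\zeta)$, which is exactly what makes $\tau(x,\delta)$ and the polydiscs $Q_\delta(x)$ usable later, and your sketch (like the statement quoted here) leaves that quantitative part untouched.
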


Moreover, the mappings $\Phi_x$ satisfies the condition
\begin{equation}\label{eq1}
\Phi_x(x)=0\quad\text{and}\quad\Phi_x(x+(-\epsilon,'0))=(-2\epsilon\partial r/\partial\overline{z}_0(x),'0)\in\mathbb{C}\times\mathbb{C}^d.
\end{equation}
If we set $r_x:=r\circ\Phi^{-1}_x$, let $P^x:=[P^x_{\alpha,j}]_{2\leq\alpha,j\leq d}$ be the unitary matrix such that
\[
{\overline{P}^x}^{\mathrm{T}}\bigg[\frac{\partial^2 r(x)}{\partial z_\alpha\partial\overline{z}_j}\bigg]_{2\leq\alpha,j\leq d}P^x=\begin{pmatrix}
\lambda_2^x\\
&\ddots&\\
&&\lambda_d^x
\end{pmatrix}
\]
where the eigenvalues $\lambda_j^x>0$, $j=2,\ldots,d$, then by \cite[Equation~(2.15)]{cho1994levirankn-2} (on page 808), we have
\begin{align}\label{eq8}
(d\Phi_x)L^r_0&=\frac{\partial}{\partial\zeta_0}=2\frac{\partial r}{\partial\overline{z}_0}(x)L_0^{r_x}\notag\\
(d\Phi_x)L^r_1&=\frac{\partial}{\partial\zeta_1}-\bigg(\frac{\partial r_x}{\partial\zeta_0}\bigg)^{-1}\bigg(\frac{\partial r_x}{\partial\zeta_1}\bigg)\frac{\partial}{\partial\zeta_0}=L^{r_x}_1\notag\\
\text{and for~~}\alpha&=2,\ldots,d\\
(d\Phi_x)L_\alpha^r&=\sum^d_{j=2}\overline{P}_{\alpha, j}^x(\lambda_j^x)^{-\frac{1}{2}}\bigg[\frac{\partial}{\partial\zeta_j}-\bigg(\frac{\partial r_x}{\partial\zeta_0}\bigg)^{-1}\bigg(\frac{\partial r_x}{\partial\zeta_j}\bigg)\frac{\partial}{\partial\zeta_0}\bigg]=\sum^d_{j=2}\overline{P}_{\alpha, j}^x(\lambda_j^x)^{-\frac{1}{2}}L^{r_x}_j.\notag
\end{align}

\begin{remark}
If we assume $u_\infty$ is the origin and take a linear transformation of $\mathbb{C}^{d+1}$ (which is clearly an automorphism of $\mathbb{C}^{d+1}$) beforehand such that
\[
r(z)=\re(z_0)+f(z_1,\ldots,z_d,\im(z_0))
\]
near $u_\infty$, then for $x\in W$, $\partial r/\partial \overline{z}_0(x)=1/2$ and $\Phi_x(x+(-\epsilon,'0))=(-\epsilon,'0)$.
\end{remark}

Now we define the polydiscs. For $x\in W$, set
\begin{align*}
A_\ell(x)&=\max\{|a_{j,k}(x)| : j+k=\ell\},\quad 2\leq\ell\leq 2m\\
B_{\ell'}(x)&=\max\{|b_{j,k}^\alpha(x)| : j+k=\ell',2\leq\alpha\leq d\},\quad2\leq\ell'\leq m
\end{align*}
and for each $\delta>0$, define
\[
\tau(x,\delta)=\min\{(\delta/A_\ell(x))^{1/\ell}, (\delta/B_{\ell'}(x))^{1/\ell'} : 2\leq\ell\leq 2m, 2\leq\ell'\leq m\}.
\]
If fact, it was shown that the coefficients $b^\alpha_{j,k}(x)$ are insignificant and may be dropped out, so
\[
\tau(x,\delta)=\min\{(\delta/A_\ell(x))^{1/\ell}: 2\leq\ell\leq 2m\}.
\]
Moreover, if we set
\[
\eta(x,\delta)=\min\{(\delta/C_\ell^r(x))^{1/\ell} : 2\leq\ell\leq 2m\},
\]
it follows that $\tau(x,\delta)\simeq\eta(x,\delta)$ for $x\in W$ and $\delta>0$ (see \cite[Propsition~2.7.]{cho1994levirankn-2}). Here $\simeq$ means equivalence up to a positive constant which only depends on $\Omega$.

If $u_\infty$ is of finite type $2m$, we have $A_{2m}(u_\infty)\geq 2c'>0$, and hence we may assume that $A_{2m}(x)\geq c'>0$ for all $x\in W$ by shrinking $W$ if necessary. Thus we have the estimate
\[
\delta^{1/2}\lesssim\tau(x,\delta)\lesssim\delta^{1/(2m)},\quad x\in W.
\]
Now set
\[
\tau_0(x,\delta)=\delta,\quad \tau_1(x,\delta)=\tau(x,\delta), \quad \tau_2(x,\delta)=\cdots=\tau_d(x,\delta)=\delta^{1/2},
\]
define
\[
R_\delta(x)=\{\zeta\in\mathbb{C}^{d+1} : |\zeta_k|<\tau_k(x,\delta), k=0,1,2,\ldots,d\},
\]
and
\[
Q_\delta(x)=\{\Phi_x^{-1}(\zeta) : \zeta\in R_\delta(x)\}.
\]

\begin{remark}\label{remark4}
It was shown (see \cite[on page 287]{verma2015corankone}) that there exist positive constants $0<\delta_e<1$ and $C_e>1$ such that, for all $x\in W$ and $0<\delta<\delta_e$,
\begin{enumerate}
\item these ``polydiscs" satisfy the engulfing property, i.e.,  if $y\in Q_\delta(x)$, then we have $Q_\delta(y)\subset Q_{C_e\delta}(x)$, and
\item if $y\in Q_\delta(x)$, then $\tau(y,\delta)\leq C_e\tau(x,\delta)\leq C_e^2\tau(x,\delta)$.
\end{enumerate}
\end{remark}

\section{Scaling processes and stability of the Kobayashi metrics}\label{section4}

In this section, we take shall a scaling process near a boundary point $u_\infty\in\partial\Omega$ and give the estimates of the Kobayashi metrics on the scaled domains. We begin with the following theorem:
\begin{thm}\label{thm5}
Let $\Omega\subset\mathbb{C}^{d+1}$ be a bounded pseudo-convex domain of finite type with smooth boundary. Suppose that Levi form has rank at least $d-1$ at each boundary point. Let $\{u_n\}\subset\Omega$ be a sequence that converges to a point $u_\infty\in\partial\Omega$. Then there exist a sequence $\{\psi_n\}\subset\mathrm{Aut}(\Omega)$ and a polynomial $P\colon\mathbb{C}\to\mathbb{R}$ subharmonic, without harmonic terms and $P(0)=0$, such that
\begin{enumerate}
\item[$\mathrm{(i)}$] there exists $\epsilon_n\to0^+$ such that $r_n:=\frac{1}{\epsilon_n}r\circ\psi^{-1}_n\to r_P$ locally uniformly on compact subsets of $\mathbb{C}^{d+1}$;
\item[$\mathrm{(ii)}$] $\psi_n(u_n)\to(-1,'0)\in\mathbb{C}\times\mathbb{C}^d$.
\end{enumerate}
\end{thm}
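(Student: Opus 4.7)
The approach is the classical Cho--type anisotropic scaling. After an initial unitary change of coordinates we may assume $r(z)=\re(z_0)+f(z_1,\ldots,z_d,\im(z_0))$ near $u_\infty$, so the remark following~\eqref{eq8} gives $\Phi_x(x+(-\epsilon,'0))=(-\epsilon,'0)$. For each $n$, let $p_n\in\partial\Omega$ be the boundary point on the real $x_0$-line through $u_n$, so that $u_n=p_n+(-\epsilon_n,'0)$ with $\epsilon_n:=-r(u_n)\to 0^+$ and $p_n\to u_\infty$.

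Next I introduce the anisotropic dilation
\[
\Lambda_n(\zeta_0,\zeta_1,\zeta_2,\ldots,\zeta_d):=\bigl(\zeta_0/\epsilon_n,\ \zeta_1/\tau_n,\ \zeta_2/\sqrt{\epsilon_n},\ldots,\ \zeta_d/\sqrt{\epsilon_n}\bigr),\qquad \tau_n:=\tau(p_n,\epsilon_n),
\]
and set $\psi_n:=\Lambda_n\circ\Phi_{p_n}$. Each $\psi_n$ is a composition of polynomial biholomorphisms of $\mathbb{C}^{d+1}$. Property (ii) is then immediate: $\psi_n(u_n)=\Lambda_n(-\epsilon_n,'0)=(-1,'0)$.

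For property (i) I substitute the expansion of $r\circ\Phi^{-1}_{p_n}$ from Cho's normalization proposition into $r_n:=\epsilon_n^{-1}r\circ\psi_n^{-1}$ and track each term. The pure $z_1^j\bar z_1^k$ coefficients become $a_{j,k}(p_n)\tau_n^{j+k}/\epsilon_n$, whose absolute values are bounded by $1$ directly from the definition of $\tau$; since the admissible index set $\{(j,k):j,k>0,\,j+k\leq 2m\}$ is finite, a diagonal argument produces a subsequence on which each converges to a limit $\tilde a_{j,k}$, and I set $P(z_1):=\sum_{j,k}\tilde a_{j,k}z_1^j\bar z_1^k$. The Hermitian terms $|\zeta_\alpha|^2$ are scale-invariant, since $\zeta_\alpha$ and $\bar\zeta_\alpha$ are both dilated by $\sqrt{\epsilon_n}$. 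The mixed cross terms $\re(b^\alpha_{j,k}(p_n)\zeta_1^j\bar\zeta_1^k\zeta_\alpha)$ and the remainder $R$ are dealt with via the uniform estimates recorded in Section~\ref{section3} (in particular the comparability $\tau\simeq\eta$ and the ``insignificance'' of the $b$-coefficients noted just before the polydisc definition): they either vanish in the limit or, after passing to a further subsequence, are absorbed into the pure part by a holomorphic adjustment in the $z_\alpha$-direction, leaving $r_n\to r_P(z):=\re(z_0)+P(z_1)+\sum_{\alpha=2}^d|z_\alpha|^2$ locally uniformly on compact subsets of $\mathbb{C}^{d+1}$.

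Finally I verify the properties of $P$. The equality $P(0)=0$ is automatic since every monomial has $j,k\geq 1$; the absence of harmonic terms is built into Cho's normal form; and $P$ is subharmonic because each $r_n$ is plurisubharmonic ($\Omega$ is pseudo-convex), plurisubharmonicity passes to locally uniform limits, and the restriction of a plurisubharmonic function to a complex line is subharmonic. The main technical obstacle is the bookkeeping of the mixed $b$-terms and the remainder $R$, which requires uniform control of the expansion coefficients as $p_n\to u_\infty$; the engulfing and comparability properties of the polydiscs from Remark~\ref{remark4} provide exactly the uniformity needed to push everything through.
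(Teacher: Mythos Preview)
Your approach is the paper's: project $u_n$ along the $\re z_0$-axis to a boundary point (the paper calls it $\xi_n$), apply Cho's normalizing map $\Phi_{\xi_n}$, then the anisotropic dilation $\Lambda_n$, and track each block of the expansion term by term. Two small corrections worth noting: the mixed $b$-terms are not ``absorbed by a holomorphic adjustment'' but simply vanish in the limit, and the paper makes this precise by quoting \cite[Lemma~2.4]{cho1994levirankn-2}, which yields the explicit bound $\bigl|\sum_{j+k\le m} b^\alpha_{j,k}(\xi_n)\,\epsilon_n^{-1/2}\tau_n^{\,j+k}\zeta_1^j\bar\zeta_1^k\bigr|\le\tau_n^{1/10}\to0$; and the engulfing property of Remark~\ref{remark4} is not what supplies the uniformity for this theorem (it enters later, in the stability of the Kobayashi \emph{distance})---everything needed here comes directly from Cho's normal form and the definition of $\tau$.
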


\begin{proof}
We may assume that $u_\infty$ is the origin and $\Omega$ is locally defined by a smooth function $r$ in a neighborhood $W$ of $u_\infty\in\partial\Omega$. Suppose $u_\infty$ is of finite type $2m$ and $|\partial r/\partial\overline{z}_0(u_\infty)|\geq c>0$, and $\partial\overline{\partial}r(x)[L^r_i,\overline{L_j^r}]_{2\leq i,j\leq d}$ has $(d-1)$-positive eigenvalues in $W$. Then for $n$ large enough, pick $\epsilon_n>0$ such that $\xi_n:=u_n+(\epsilon_n,0,\ldots,0)\in\partial\Omega$. It follows that $\epsilon_n\to0$ since $u_n\to u_\infty$ and consequently $\xi_n\to u_\infty$. Set
\[
\Phi_n(z):=\Phi_{\xi_n}(z)=(\zeta_0,\zeta_1,\ldots,\zeta_d),
\]
then $\Phi_n(u_n)=(-2\epsilon_n\partial r/\partial\overline{z}_0(\xi_n),0,\ldots,0)$ and $\partial r/\partial\overline{z}_0(\xi_n)\to 1/2$ when $n\to\infty$.

Next define a dilation
\[
\Lambda_n(\zeta_0,\zeta_1,\cdots,\zeta_d):=\bigg(\frac{\zeta_0}{\tau_0(\xi_n,\epsilon_n)},\frac{\zeta_1}{\tau_1(\xi_n,\epsilon_n)},\ldots,
\frac{\zeta_d}{\tau_d(\xi_n,\epsilon_n)}\bigg).
\]
Set $\psi_n:=\Lambda_n\circ\Phi_n$ and $\Omega_n:=\psi_n(\Omega)$, then
\begin{align*}
r_n:=&\frac{1}{\epsilon_n}r\circ\Phi_n^{-1}\circ\Lambda_n^{-1}(\zeta)\\
=&\re(\zeta_0)+\sum_{\substack{j+k\leq 2m\\j,k>0}}a_{j,k}(\xi_n)\epsilon_n^{-1}\tau(\xi_n,\epsilon_n)^{j+k}\zeta_1^j\overline{\zeta}_1^k+\sum^d_{\alpha=2}
|\zeta_\alpha|^2\\
&+\sum^d_{\alpha=2}\sum_{\substack{j+k\leq m\\j,k>0}}\re\big[\big(b^\alpha_{j,k}(\xi_n)\epsilon_n^{-\frac{1}{2}}\tau(\xi_n,\epsilon_n)^{j+k}
\zeta_1^j\overline{\zeta}_1^k\big)\zeta_\alpha\big]+
\mathrm{O}(\tau(\xi_n,\epsilon_n))
\end{align*}
is a local defining function of $\Omega_n$ near the origin. It was shown that for $\alpha=2,\ldots,d$ and $\epsilon_n>0$ small, $|\zeta_1|\leq1$, (\cite[Lemma~2.4]{cho1994levirankn-2})
\[
\bigg|\sum_{\substack{j+k\leq m\\j,k>0}}\re\big[b^\alpha_{j,k}(\xi_n)\epsilon_n^{-\frac{1}{2}}\tau(\xi_n,\epsilon_n)^{j+k}
\zeta_1^j\overline{\zeta}_1^k\big]\bigg|\leq\tau(\xi_n,\epsilon_n)^{\frac{1}{10}}\to0
\]
when $n\to\infty$. Thus
\[
r_n(\zeta)\to r_P(\zeta):=\re(\zeta_0)+P(\zeta_1,\overline{\zeta}_1)+\sum^d_{\alpha=2}|\zeta_\alpha|^2
\]
uniformly on compact subsets of $\mathbb{C}^{d+1}$. Here $P(\zeta_1,\overline{\zeta}_1)$ is a subharmonic polynomial of degree $2m'\leq2m$ without harmonic terms, and the Laplacian does not vanish identically, and we set the limit domain by
\[
\widehat{\Omega}:=\{ \zeta\in\mathbb{C}^{d+1} : r_P(\zeta)<0\}.
\]
\end{proof}

\begin{remark}\label{remark1}
As $\partial\widehat{\Omega}$ is also of finite type and the Levi form of every boundary point of $\widehat{\Omega}$ has at most corank one, every boundary point is a holomorphic peak point \cite{cho1996peakpoints}. Also \cite[Lemma~1.]{bedford1991noncompactautomorphism} shows that there is a holomorphic peak function at $\infty$ on $\widehat{\Omega}$, thus \cite[Theorem~1.]{gaussier1999tautness} implies that $\widehat{\Omega}$ is complete hyperbolic, hence taut.
\end{remark}

If we denote by $P^n_{\alpha,j}:=P^{\xi_n}_{\alpha,j}$ and $\lambda^{\xi_n}_j:=\lambda_j^{(n)}$, it follows from \eqref{eq8} that

\begin{lem}
For $p\in\psi_n(\Omega\cap W)$ and $X\in\mathbb{C}^{d+1}$, set $p^*:=\psi_n^{-1}(p)$, we have
\begin{align*}
(d\psi_n)L^r_0(p^*)&=\frac{d_n}{\epsilon_n}\frac{\partial}{\partial\zeta_0}=\frac{d_n}{\epsilon_n}L_0^{r_n}(p)\\
(d\psi_n)L^r_1(p^*)&=\frac{1}{\tau_1(\xi_n,\epsilon_n)}\bigg[\frac{\partial}{\partial\zeta_1}-\bigg(\frac{\partial r_n}{\partial\zeta_0}\bigg)^{-1}_{|p}\bigg(\frac{\partial r_n}{\partial\zeta_1}\bigg)_{|p}\frac{\partial}{\partial\zeta_0}\bigg]
=\frac{1}{\tau_1(\xi_n,\epsilon_n)}L^{r_n}_1(p)\\
\text{and for~~}\alpha&=2,\ldots,d\\
(d\psi_n)L_\alpha^r(p^*)&=\sum^d_{j=2}\frac{\overline{P}_{\alpha, j}^n(\lambda_j^{(n)})^{-\frac{1}{2}}}{\tau_j(\xi_n,\epsilon_n)}\bigg[\frac{\partial}{\partial\zeta_j}-\bigg(\frac{\partial r_n}{\partial\zeta_0}\bigg)^{-1}_{|p}\bigg(\frac{\partial r_n}{\partial\zeta_j}\bigg)_{|p}\frac{\partial}{\partial\zeta_0}\bigg]=\sum^d_{j=2}\frac{\overline{P}_{\alpha, j}^n(\lambda_j^{(n)})^{-\frac{1}{2}}}{\tau_j(\xi_n,\epsilon_n)}L^{r_n}_j(p)
\end{align*}
where $d_n=2\partial r/\partial z_0(\xi_n)$ and $d_n\to1$ when $n\to\infty$.
\end{lem}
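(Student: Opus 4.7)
The plan is to factor $\psi_n=\Lambda_n\circ\Phi_n$ and apply the chain rule, $d\psi_n=d\Lambda_n\circ d\Phi_n$. The first factor is already handled by \eqref{eq8} with $x=\xi_n$: it expresses each $L_j^r$ as a scalar multiple (or, for $\alpha\geq 2$, a $\overline{P}^n$-weighted linear combination) of the Levi-adapted vectors $L_j^{r_{\xi_n}}$ in the intermediate coordinates $\zeta=\Phi_n(z)$. The second factor is diagonal: writing $w=\Lambda_n(\zeta)$ for the coordinates on $\Omega_n$, $d\Lambda_n$ sends $\partial/\partial\zeta_j$ to $\tau_j(\xi_n,\epsilon_n)^{-1}\partial/\partial w_j$ for each $j=0,1,\ldots,d$.

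The key step is the compatibility identity
\[
(d\Lambda_n)\,L_j^{r_{\xi_n}}\;=\;\frac{1}{\tau_j(\xi_n,\epsilon_n)}\,L_j^{r_n},\qquad j=0,1,\ldots,d,
\]
which I would verify directly from the defining relation $r_n=\epsilon_n^{-1}r_{\xi_n}\circ\Lambda_n^{-1}$. Since $\tau_0(\xi_n,\epsilon_n)=\epsilon_n$, the prefactor $\epsilon_n^{-1}$ cancels $\tau_0$ in the $w_0$-derivative, so $\partial r_n/\partial w_0=\partial r_{\xi_n}/\partial\zeta_0$, whereas $\partial r_n/\partial w_j=(\tau_j/\epsilon_n)\,\partial r_{\xi_n}/\partial\zeta_j$ for $j\geq 1$. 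The ratio $(\partial r_n/\partial w_0)^{-1}(\partial r_n/\partial w_j)$ therefore differs from $(\partial r_{\xi_n}/\partial\zeta_0)^{-1}(\partial r_{\xi_n}/\partial\zeta_j)$ by exactly the factor $\tau_j/\epsilon_n$, which is precisely what is needed for the tangential correction term inside $L_j^{r_{\xi_n}}$ to push forward to the analogous correction term inside $L_j^{r_n}$, up to the overall prefactor $\tau_j^{-1}$.

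Substituting this identity into \eqref{eq8} yields the three claimed expressions. The prefactor $d_n/\epsilon_n$ in the $L_0^r$ case is the product of the factor supplied by \eqref{eq8} with the dilation factor $\epsilon_n^{-1}$; under the preliminary normalization $r(z)=\re(z_0)+f(z_1,\ldots,z_d,\im z_0)$ recorded in the remark above, one has $\partial r/\partial z_0(u_\infty)=1/2$, and continuity along $\xi_n\to u_\infty$ then gives $d_n\to 1$.

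The main obstacle is purely bookkeeping: three coordinate systems ($z,\zeta,w$), three defining functions ($r,r_{\xi_n},r_n$), and several versions of the Levi-adapted basis are in play simultaneously, and one must check that the tangential correction terms transform into each other exactly under the successive pushforwards, with no residual $\partial/\partial w_0$ contributions. Once the compatibility identity above is in place, the rest of the proof is a substitution into \eqref{eq8} and the short continuity argument for $d_n$.
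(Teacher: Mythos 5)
Your argument is correct and is exactly the computation the paper intends: the paper gives no proof at all, simply asserting the lemma "follows from \eqref{eq8}", and your chain-rule decomposition $d\psi_n=d\Lambda_n\circ d\Phi_n$ together with the verification that the diagonal dilation carries $L_j^{r_{\xi_n}}$ to $\tau_j(\xi_n,\epsilon_n)^{-1}L_j^{r_n}$ (using $r_n=\epsilon_n^{-1}r_{\xi_n}\circ\Lambda_n^{-1}$ and $\tau_0=\epsilon_n$) is the omitted bookkeeping, done correctly. Your identification of $d_n$ and the limit $d_n\to1$ via the normalization in the remark also matches the paper's usage.
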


\begin{thm}\label{thm2}
Notations as Theorem~\ref{thm5}. There exists $A_0\geq1$ such that for all $p\in\psi_n(\Omega\cap W)$ and $X\in\mathbb{C}^{d+1}$,
\[
A_0^{-1}M_{r_n}(p;X)\leq M_r(p^*;(d\psi_n^{-1})_{|p}X)\leq A_0M_{r_n}(p;X)
\]
where $p^*=\psi_n^{-1}(p)$ and $M_{r_n}\to M_{r_P}$ locally uniformly on $\widehat{\Omega}$.
\end{thm}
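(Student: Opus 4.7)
\medskip
\noindent\emph{Proof sketch.} The plan is to handle the two assertions separately: first the quantitative equivalence of $M_{r_n}(p;X)$ and $M_r(p^*;(d\psi_n^{-1})_{|p} X)$ with a uniform constant $A_0$, and then the locally uniform convergence $M_{r_n}\to M_{r_P}$ on $\widehat{\Omega}$.

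For the equivalence, write $X=\sum_{k=0}^{d}b_k L_k^{r_n}(p)$ and invert the formulas of the preceding lemma to express $(d\psi_n^{-1})_{|p}X=\sum_{j}\tilde b_j L_j^{r}(p^*)$. Block-wise one reads off $\tilde b_0=(\epsilon_n/d_n)b_0$, $\tilde b_1=\tau_1(\xi_n,\epsilon_n)\,b_1$, and for $j\ge 2$, $\tilde b_j=\epsilon_n^{1/2}\sum_{k\ge 2}(\lambda_k^{(n)})^{1/2}((\overline{P}^n)^{-1})_{k,j}\,b_k$. Because $\overline{P}^n$ is unitary and the eigenvalues $\lambda_k^{(n)}$ lie in a fixed compact subset of $(0,\infty)$ (they converge to those of the limit Levi form in the transverse directions), one obtains $\sum_{j\ge 2}|\tilde b_j|\simeq\epsilon_n^{1/2}\sum_{k\ge 2}|b_k|$ uniformly in $n$. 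Using the basic scaling identity $|r(p^*)|=\epsilon_n|r_n(p)|$, the $L_0$-summand of $M_r(p^*;(d\psi_n^{-1})X)$ equals $|b_0|/(d_n|r_n(p)|)$, matching the first summand of $M_{r_n}(p;X)$ up to the factor $d_n\to 1$; the transverse summand $\sum_{j\ge 2}|\tilde b_j|/\sqrt{|r(p^*)|}$ is comparable to $\sum_{k\ge 2}|b_k|/\sqrt{|r_n(p)|}$ by the above norm equivalence, matching the third summand of $M_{r_n}(p;X)$.

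The tangential ($L_1$-) summand is the delicate one. For it I will establish the exact identity
\[
C_\ell^{r_n}(p)=\frac{\tau_1(\xi_n,\epsilon_n)^{\ell}}{\epsilon_n}\,C_\ell^{r}(p^*),\qquad 2\le\ell\le 2m.
\]
The preceding lemma supplies the pointwise relation $(d\psi_n)L_1^r(p^*)=\tau_1(\xi_n,\epsilon_n)^{-1}L_1^{r_n}(p)$ at every $p\in\psi_n(\Omega\cap W)$, so I may treat it as an identity of vector fields. Combined with $r_n=\epsilon_n^{-1}r\circ\psi_n^{-1}$ and the tensoriality of the Levi form under biholomorphism, this gives $\partial\overline{\partial}r_n(p)[L_1^{r_n},\overline{L_1^{r_n}}]=\tau_1^2\epsilon_n^{-1}\partial\overline{\partial}r(p^*)[L_1^r,\overline{L_1^r}]$. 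Iterating the chain rule, each further application of $L_1^{r_n}$ or $\overline{L_1^{r_n}}$ to this scalar function contributes one more factor of $\tau_1$, and the displayed identity follows. Hence $|C_\ell^{r_n}(p)/r_n(p)|^{1/\ell}=\tau_1|C_\ell^r(p^*)/r(p^*)|^{1/\ell}$, so the middle summand of $M_r(p^*;(d\psi_n^{-1})X)$ coincides exactly with the middle summand of $M_{r_n}(p;X)$. Combining the three comparisons yields the constant $A_0$.

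For the convergence claim, I will use that $r_n\to r_P$ holds in $C^\infty_{\mathrm{loc}}$, not merely in $C^0$: the $n$-dependent parts of the explicit formula for $r_n$ in the proof of Theorem~\ref{thm5} are a polynomial in $\zeta_1,\overline{\zeta}_1$ of bounded degree with coefficients $a_{j,k}(\xi_n)\epsilon_n^{-1}\tau(\xi_n,\epsilon_n)^{j+k}$ which converge (after passing to a subsequence), together with a remainder that is $O(\tau(\xi_n,\epsilon_n))$ in every $C^k$-norm on a fixed compact set. Consequently the coefficient functions defining $L_j^{r_n}$ and the quantities $C_\ell^{r_n}$ converge to their $r_P$-counterparts locally uniformly on $\widehat{\Omega}$; since $r_n\to r_P<0$ stays bounded away from $0$ on any compact subset of $\widehat{\Omega}$, each summand of $M_{r_n}(p;X)$ converges to the corresponding summand of $M_{r_P}(p;X)$ locally uniformly. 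The main obstacle is the derivation of the $C_\ell^{r_n}$ identity: one must treat the lemma's relation as a genuine vector-field identity so that it can be iterated inside the definition of $\mathcal{L}_{j,k}^{r_n}$. Once that is done, the tangential term falls out with no constant to absorb and the rest is routine bookkeeping.
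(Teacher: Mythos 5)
Your proposal is correct and follows essentially the same route as the paper: you invert the pushforward formulas of the preceding lemma to relate the coefficient vectors ($\tilde b_0=(\epsilon_n/d_n)b_0$, $\tilde b_1=\tau(\xi_n,\epsilon_n)b_1$, transverse block via the unitary $\overline{P}^n$ and uniformly bounded eigenvalues $\lambda_j^{(n)}$), and you use the scaling identities $r_n(p)=\epsilon_n^{-1}r(p^*)$ and $C_\ell^{r_n}(p)=\tau(\xi_n,\epsilon_n)^{\ell}\epsilon_n^{-1}C_\ell^{r}(p^*)$ for the tangential term, exactly as in the paper's proof. Your added observation that $M_{r_n}\to M_{r_P}$ requires convergence of the derivatives of $r_n$ (not just $C^0$ convergence) is a detail the paper leaves implicit but does not alter the argument.
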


\begin{proof}
The proof is similar to \cite[Theorem~4.1]{Fiacchi2020Gromov}. By the previous lemma,
\begin{align*}
\frac{\tau(\xi_n,\epsilon_n)^2}{\epsilon_n}\mathcal{L}^r_{1,1}(p^*)&=\frac{\tau(\xi_n,\epsilon_n)^2}{\epsilon_n}
\partial\overline{\partial}r(p^*)\big[L^r_1,\overline{L_1^r}\big]\\
&=\frac{\tau(\xi_n,\epsilon_n)^2}{\epsilon_n}\partial\overline{\partial}(r\circ\psi_n^{-1})(p)\big[(d\psi_n)L^r_1,
(d\psi_n)\overline{L^r_1}\big]\\
&=\partial\overline{\partial}\bigg(\frac{1}{\epsilon_n}r\circ\psi_n^{-1}\bigg)(p)\big[L^{r_n}_1,
\overline{L^{r_n}_1}\,\big]=\mathcal{L}^{r_n}_{1,1}(p).
\end{align*}
Similarly, for $\ell\in\{2,\ldots,2m\}$,
\[
\frac{\tau(\xi_n,\epsilon_n)^\ell}{\epsilon_n}C^r_\ell(p^*)=C^{r_n}_\ell(p)
\]
and
\[
C^{r_n}_\ell(p)\to C^{r_P}_\ell(p)=:A^P_\ell(p),\quad n\to\infty
\]
uniformly on compact subsets of $\mathbb{C}^{d+1}$.

Now, for any $X\in\mathbb{C}^{d+1}$, if we write $(d\psi_n^{-1})_{|p}X=:y_0L^r_0+\cdots+y_dL^r_d$, then
\begin{align*}
X&=y_0(d\psi_n)L_0^r+\cdots+y_d(d\psi_n)L_d^r\\
&=\frac{y_0d_n}{\epsilon_n}L_0^{r_n}+\frac{y_1}{\tau(\xi_n,\epsilon_n)}L^{r_n}_1+\sum^d_{\alpha=2}y_\alpha
\sum^d_{j=2}\frac{\overline{P}_{\alpha, j}^n(\lambda_j^{(n)})^{-\frac{1}{2}}}{\sqrt{\epsilon_n}}L^{r_n}_j\\
&=:x_0L_0^{r_n}+x_1L^{r_n}_1+x_2L^{r_n}_2+\cdots+x_dL_d^{r_n}
\end{align*}
and it follows that
\[
y_0=\epsilon_n x_0/d_n,\quad y_1=\tau(\xi_n,\epsilon_n)x_1
\]
and
\[
\begin{pmatrix}
y_2\\
\vdots\\
y_d
\end{pmatrix}=\sqrt{\epsilon_n}
\begin{pmatrix}
(\lambda_2^{(n)})^{1/2}\\
&\ddots&\\
&&(\lambda_d^{(n)})^{1/2}
\end{pmatrix}\big[\,\overline{P}^n_{\alpha,j}]^{-1}
\begin{pmatrix}
x_2\\
\vdots\\
x_d
\end{pmatrix}.
\]

Recall that $\big[\,\overline{P}^n_{\alpha,j}]^{-1}$ is an $(d-1)\times(d-1)$ unitary matrix. Let $c_*>0$ be a constant such that $0<c_*^{-1}\leq(\lambda_j^x)^{1/2}\leq c_*$ for all $x\in W$. As $d_n\to1$ we can choose $A_0\geq1$ for all $n$ large such that
\begin{align*}
M_r(p^*;X)&=\frac{|y_0|}{|r(p^*)|}+|y_1|\sum^{2m}_{\ell=2}\bigg|\frac{C_\ell^r(p^*)}{r(p^*)}\bigg|^{\frac{1}{\ell}}+
\sum^{d}_{\alpha=2}\frac{|y_\alpha|}{\sqrt{|r(p^*)|}}\\
&\leq\frac{1}{d_n}\frac{|x_0|}{|r_n(p)|}+|x_1|\sum^{2m}_{\ell=2}
\bigg|\frac{C_\ell^{r_n}(p)}{r_n(p)}\bigg|^{\frac{1}{\ell}}+\sum^d_{j=2}\frac{c_*\sqrt{d}|x_j|}{\sqrt{|r_n(p)|}}\\
&\leq A_0M_{r_n}(p;X)
\end{align*}
and similarly we have the opposite inequality
\[
A^{-1}_0M_{r_n}(p;X)\leq M_r(p^*;d\psi_n^{-1}X)\leq A_0M_{r_n}(p;X)
\]
and the proof is complete.

\end{proof}

Now we need the following theorem \cite[Lemma~3.3]{verma2015corankone}:

\begin{thm}\label{thm4}
For $(z;X)\in\widehat{\Omega}\times\mathbb{C}^{d+1}$,
\[
\lim_{n\to\infty}F_{\Omega_n}(z;X)=F_{\widehat{\Omega}}(z;X).
\]
Moreover, the convergence is uniform on compact subsets of $\widehat{\Omega}\times\mathbb{C}^{d+1}$.
\end{thm}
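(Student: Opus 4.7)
The natural strategy is to prove the two one-sided inequalities $\limsup_n F_{\Omega_n}(z;X)\leq F_{\widehat\Omega}(z;X)$ and $\liminf_n F_{\Omega_n}(z;X)\geq F_{\widehat\Omega}(z;X)$ separately, and then upgrade the resulting pointwise limit to locally uniform convergence using the two-sided estimate
\[
(AA_0)^{-1}M_{r_n}(p;Y)\leq F_{\Omega_n}(p;Y)\leq AA_0\,M_{r_n}(p;Y)
\]
obtained by combining the biholomorphic invariance $F_{\Omega_n}(p;Y)=F_\Omega(p^*;(d\psi_n^{-1})_{|p}Y)$ with Theorems~\ref{thm3} and~\ref{thm2}, together with the locally uniform convergence $M_{r_n}\to M_{r_P}$ on $\widehat\Omega$.

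For the upper bound, given $\varepsilon>0$ I would pick an analytic disc $f\colon\Delta\to\widehat\Omega$ with $f(0)=z$ and $df(0)\lambda=X$ such that $|\lambda|\leq F_{\widehat\Omega}(z;X)+\varepsilon$. For $r<1$ the dilation $f_r(\zeta):=f(r\zeta)$ has image inside the compact set $f(\overline{\Delta_r})\subset\widehat\Omega$. Since Theorem~\ref{thm5}(i) gives $r_n\to r_P$ locally uniformly, the open set $\Omega_n=\{r_n<0\}$ contains $f_r(\overline\Delta)$ for all $n$ large, so $f_r$ is admissible for $F_{\Omega_n}(z;X)$ with tangent parameter $\lambda/r$. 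Therefore $F_{\Omega_n}(z;X)\leq|\lambda|/r$, and sending $n\to\infty$, $r\to1^-$, $\varepsilon\to 0^+$ yields the claim.

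For the lower bound, I would take near-extremal discs $f_n\colon\Delta\to\Omega_n$ with $f_n(0)=z$, $df_n(0)\lambda_n=X$, and $|\lambda_n|-F_{\Omega_n}(z;X)\to 0$. The upper bound implies $\{\lambda_n\}$ is bounded, so it suffices to extract a subsequence $\{f_{n_k}\}$ converging locally uniformly on $\Delta$ to a holomorphic $f\colon\Delta\to\widehat\Omega$ with $f(0)=z$. The main obstacle is that the targets $\Omega_n$ vary, so tautness of $\widehat\Omega$ (Remark~\ref{remark1}) cannot be applied directly. My plan here is twofold. First, the holomorphic peak function at $\infty$ on $\widehat\Omega$ supplied by Remark~\ref{remark1}, together with a standard Bedford--Pinchuk-type localization, prevents $f_n(\overline{\Delta_r})$ from escaping to infinity in $\mathbb{C}^{d+1}$ for any fixed $r<1$; this produces a Euclidean-bounded family, hence a locally uniform subsequential limit $f$ by Montel. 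Second, the lower estimate $F_{\Omega_n}(p;Y)\geq(AA_0)^{-1}M_{r_n}(p;Y)$ blows up as $p$ approaches $\partial\widehat\Omega$, because $|r_n(p)|\to 0$ uniformly on compacta there; together with a Hurwitz-type argument using the peak points at finite boundary points (also furnished by Remark~\ref{remark1}), this forces $f(\Delta)\subset\widehat\Omega$ rather than $\overline{\widehat\Omega}$. Passing to the limit in $df_n(0)\lambda_n=X$ along the subsequence then yields $F_{\widehat\Omega}(z;X)\leq|\lim\lambda_{n_k}|\leq\liminf_n F_{\Omega_n}(z;X)$.

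Locally uniform convergence on compact subsets of $\widehat\Omega\times\mathbb{C}^{d+1}$ will follow from the pointwise convergence combined with the sandwich bounds above and the standard continuity properties of the Kobayashi pseudometric on taut domains: the family $\{F_{\Omega_n}\}$ is equibounded on compacts and positively homogeneous in the tangent direction, so pointwise convergence promotes to uniform convergence through a routine Arzel\`a--Ascoli argument. I expect the peak-function/tautness step in the lower bound to be the technical heart of the proof, since the unbounded nature of $\widehat\Omega$ means one cannot invoke Montel directly without first ruling out escape to infinity.
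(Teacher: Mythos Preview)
The paper does not give its own proof of this statement: it simply quotes it as \cite[Lemma~3.3]{verma2015corankone}. So the relevant comparison is between your outline and the argument in that reference.

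Your upper bound (dilating a nearly extremal disc $f\colon\Delta\to\widehat\Omega$ so that its image lies in a compact, hence in $\Omega_n$ for $n$ large) is correct and is exactly the standard first half of such stability results.

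The lower bound, however, has a genuine gap. You propose to control the discs $f_n\colon\Delta\to\Omega_n$ by composing with the holomorphic peak function at $\infty$ supplied by Remark~\ref{remark1}. That peak function lives on $\widehat\Omega$ (more precisely on a half-space-type model containing $\widehat\Omega$), whereas $f_n$ maps into $\Omega_n=\psi_n(\Omega)$, and you have given no reason why the $\Omega_n$ are uniformly contained in the domain of definition of that function. The convergence $r_n\to r_P$ from Theorem~\ref{thm5} is only on compacta, and globally the $\Omega_n$ are anisotropic blow-ups of a bounded domain: they are not contained in any fixed half-space, so the Bedford--Pinchuk function at $\infty$ cannot be applied to $f_n$ without further work. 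The same issue undermines your ``blow-up'' argument: the sandwich $(AA_0)^{-1}M_{r_n}\leq F_{\Omega_n}\leq AA_0\,M_{r_n}$ you derive from Theorems~\ref{thm3} and~\ref{thm2} is only valid on $\psi_n(\Omega\cap W)$, and you have not shown that the nearly extremal discs $f_n$ stay inside $\psi_n(\Omega\cap W)$.

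The approach in \cite{verma2015corankone} localizes on the other side. One uses a (plurisubharmonic) peak function at the finite boundary point $u_\infty\in\partial\Omega$ to obtain a uniform localization $F_\Omega\simeq F_{\Omega\cap W}$ near $u_\infty$; after transporting by $\psi_n$, this replaces $F_{\Omega_n}$ by $F_{\psi_n(\Omega\cap W)}$ up to a fixed multiplicative constant. The local pieces $\psi_n(\Omega\cap W)$ \emph{are} all contained in a single taut bumping domain of $\widehat\Omega$ (this is where the polydisc and engulfing machinery of Section~\ref{section3} enters), and then a normal-families argument of the Yu type (as the paper later invokes explicitly in Section~\ref{section7} for the second scaling) finishes the lower bound. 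In short: the localization that makes Montel applicable is at $u_\infty\in\partial\Omega$, not at $\infty\in\partial\widehat\Omega$.

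Your final step, upgrading to locally uniform convergence, is also not quite as you describe it: equiboundedness from the sandwich does not by itself feed into Arzel\`a--Ascoli (you have no equicontinuity). In practice one argues by contradiction along a sequence $(z_n,X_n)\to(z,X)$ and reruns the pointwise argument at moving base points, or one notes that both the upper and lower bounds above are already uniform on compacta once the localization is in place.
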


By Theorem~\ref{thm3} and Theorem~\ref{thm2}, if fact, we have a global estimate of the Kobayashi metric on $\widehat{\Omega}$:
\begin{lem}\label{lem4}
There exists a constant $A\geq1$ such that for every $(z;X)\in\widehat{\Omega}\times\mathbb{C}^{d+1}$ where $X=(x_0,x_1,\ldots,x_n)$, if we denote by
\[
M_{r_P}(z;X):=\frac{|x_0+2x_1P'(z_1)+2\sum^d_{\alpha=2}x_\alpha\overline{z}_\alpha|}
{|r_P(z)|}+|x_1|\sum^{2m}_{\ell=2}\bigg|\frac{A^P_\ell(z_1)}
{r_P(z)}\bigg|^{\frac{1}{\ell}}+\sum^d_{\alpha=2}\frac{|x_\alpha|}{\sqrt{|r_P(z)|}},
\]
where $A^P_{\ell}(z_1)=\max\big\{\big|\frac{\partial^{j+k}P}{\partial z_1^j\partial\overline{z}_1^k}(z_1)\big| :j,k>0,j+k=\ell\big\}$, then
\[
A^{-1}M_{r_P}(z;X)\leq F_{\widehat{\Omega}}(z;X)\leq AM_{r_P}(z;X).
\]
\end{lem}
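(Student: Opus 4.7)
The plan is to transport the local bound of Theorem~\ref{thm3} from the original domain $\Omega$ to the limit domain $\widehat{\Omega}$ via the scaling $\psi_n$, and then take the limit $n\to\infty$.

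Fix $(z;X)\in\widehat{\Omega}\times\mathbb{C}^{d+1}$. Since $\Omega_n=\psi_n(\Omega)\to\widehat{\Omega}$ in the sense that $r_n\to r_P$ on compact subsets, for all sufficiently large $n$ we have $z\in\Omega_n$, so that $p^*:=\psi_n^{-1}(z)\in\Omega$ is well-defined. The map $\psi_n=\Lambda_n\circ\Phi_{\xi_n}$ is a dilation centered at $\xi_n\to u_\infty$ with dilation factors $\tau_j(\xi_n,\epsilon_n)^{-1}\to\infty$, so $p^*\to u_\infty$ for each fixed $z$; in particular $p^*\in\Omega\cap W$ for $n$ large enough. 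Setting $X^*:=(d\psi_n^{-1})_{|z}X$, biholomorphic invariance of the Kobayashi metric gives $F_{\Omega_n}(z;X)=F_{\Omega}(p^*;X^*)$, so I can apply Theorem~\ref{thm3} to obtain
\[
A^{-1}M_r(p^*;X^*)\leq F_{\Omega_n}(z;X)\leq AM_r(p^*;X^*)
\]
for all such $n$, with $A\geq1$ independent of $n$.

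The next step is to replace $M_r(p^*;X^*)$ by $M_{r_n}(z;X)$ using Theorem~\ref{thm2}, which yields
\[
(AA_0)^{-1}M_{r_n}(z;X)\leq F_{\Omega_n}(z;X)\leq AA_0\,M_{r_n}(z;X).
\]
Now I let $n\to\infty$. On the left, Theorem~\ref{thm4} gives $F_{\Omega_n}(z;X)\to F_{\widehat{\Omega}}(z;X)$, and Theorem~\ref{thm2} gives $M_{r_n}(z;X)\to M_{r_P}(z;X)$ pointwise on $\widehat{\Omega}\times\mathbb{C}^{d+1}$. Taking the limit with $A:=AA_0$ yields precisely the desired double inequality.

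It remains to verify that the limit of $M_{r_n}$ agrees with the explicit formula for $M_{r_P}$ stated in the lemma. Here the main bookkeeping is to decompose $X=(x_0,x_1,\ldots,x_d)$ in standard coordinates into the moving frame $\{L_j^{r_n}\}$: writing $X=b_0^{(n)}L_0^{r_n}+\cdots+b_d^{(n)}L_d^{r_n}$, one computes $b_0^{(n)}=x_0+2x_1(\partial r_n/\partial\zeta_1)(z)+2\sum_{\alpha\geq2}x_\alpha(\partial r_n/\partial\zeta_\alpha)(z)$ (after using $\partial r_n/\partial\zeta_0\to1/2$ and $d_n\to1$), while $b_j^{(n)}=x_j$ for $j\geq1$. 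Passing to the limit and using $\partial r_n/\partial\zeta_1\to P'(z_1)$ and $\partial r_n/\partial\zeta_\alpha\to\overline{z}_\alpha$ for $\alpha\geq2$ yields the first term in the formula for $M_{r_P}$; the second term follows from $C^{r_n}_\ell\to A^P_\ell$ together with $L_1^{r_n}\to L_1^{r_P}$; the third term is immediate. The main (minor) obstacle is precisely this identification of the limiting frame and the correct recasting of the $|b_0|$--term in $(x_0,\ldots,x_d)$--coordinates; every other step is a direct consequence of the stability results already established.
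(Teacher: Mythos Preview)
Your proposal is correct and follows essentially the same route as the paper's proof: apply Theorem~\ref{thm3} at $p^*=\psi_n^{-1}(z)\in\Omega\cap W$, convert $M_r$ to $M_{r_n}$ via Theorem~\ref{thm2}, then let $n\to\infty$ using Theorem~\ref{thm4} and $M_{r_n}\to M_{r_P}$; the identification of $b_0$ in the limiting frame is exactly the computation the paper carries out. One cosmetic remark: $\partial r_n/\partial\zeta_0=1/2$ holds identically (not merely in the limit), so the ``$\to1/2$'' and the reference to $d_n$ are unnecessary here.
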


\begin{proof}
If write $X=\sum^d_{k=0}b_k(X)L^{r_P}_k$ and $P'(z_1)=\partial P(z_1,\overline{z}_1)/\partial z_1$, it follows that
\begin{align*}
b_1(X)&=x_1,\ldots,b_d(X)=x_d\\
b_0(X)&=x_0+2P'(z_1)x_1+2\overline{z}_2x_2+\cdots+2\overline{z}_dx_d,
\end{align*}
and $M_{r_n}$ converges uniformly on compact subset to $M_{r_P}$.

Now for $(z,X)\in\widehat{\Omega}\times\mathbb{C}^{d+1}$, we have
\[
F_{\Omega_n}(z;X)=F_{\Omega}(\psi^{-1}_n(z);(d\psi_n^{-1})_{|z}X),
\]
and for all $n$ large, we may assume that $\psi_n^{-1}(z)\in\Omega\cap W$. Combine Theorem~\ref{thm3} we have
\[
A^{-1}M_r(\psi_n^{-1}(z);(d\psi_n^{-1})_{|z}X)\leq F_{\Omega}(\psi_n(z);(d\psi_n^{-1})_{|z}X)\leq AM_r(\psi_n^{-1}(z);(d\psi_n^{-1})_{|z}X)
\]
and follows by Theorem~\ref{thm2} that
\[
A^{-1}A_0^{-1}M_{r_n}(z;X)\leq F_{\Omega_n}(z;X)\leq AA_0M_{r_n}(z;X).
\]
Let $n\to\infty$, we have
\[
A^{-1}A_0^{-1}M_{r_P}(z;X)\leq F_{\widehat{\Omega}}(z;X)\leq AA_0M_{r_P}(z;X).
\]
For simplicity of notation we replace $AA_0$ by $A$ and the proof is complete.
\end{proof}

\begin{remark}
It should be noted here that, for a generic domain of the form
\[
D=\bigg\{z\in\mathbb{C}^{d+1} : r_\phi(z)=\re(z_0)+\phi(z_1,\overline{z}_1)+\sum^d_{\alpha=2}|z_\alpha|^2\bigg\},
\]
where $\phi$ is a subharmonic polynomial of even degree which contains no harmonic terms, we can always define a quantity $M_{r_\phi}(z;X)$, which is also a Finsler metric on $D$. But it is not clear whether $M_{r_\phi}$ is comparable with the Kobayashi metric $F_D$ on $D$ (if $\phi$ is homogeneous, they are comparable, see \cite[Proposition~3.3]{Fiacchi2020Gromov} or \cite[Theorem~1.]{herbort1992metrichomogeneous}).
\end{remark}

\section{Stability of the Kobayashi distances}\label{section5}

In this section, we shall investigate the Kobayashi distances on the scaled domains. The technique used here is a modification of \cite[Proposition~3.6]{verma2015corankone}. The main theorem is the following:

\begin{thm}\label{thm1}
Notations as in Theorem~\ref{thm5}, for any $p,q\in\widehat{\Omega}$,
\begin{equation}\label{eq0}
\lim_{n\to\infty}d_{\Omega_n}(p,q)=d_{\widehat{\Omega}}(p,q).
\end{equation}
Moreover, the convergence is uniform on compact subsets of $\widehat{\Omega}\times\widehat{\Omega}$.
\end{thm}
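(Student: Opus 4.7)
The plan is to prove both directions of the inequality $\lim d_{\Omega_n}(p,q)=d_{\widehat{\Omega}}(p,q)$ separately, using three ingredients: the infinitesimal convergence $F_{\Omega_n}\to F_{\widehat{\Omega}}$ from Theorem~\ref{thm4}, the locally uniform convergence $r_n\to r_P$ from Theorem~\ref{thm5}, and the fact that closed Kobayashi balls in $\widehat{\Omega}$ are compact, because $\widehat{\Omega}$ is complete hyperbolic (Remark~\ref{remark1}). Uniformity on compact subsets will then follow by noting that all constants appearing in both arguments depend only on a fixed compact source of basepoints.

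For the upper bound $\limsup_n d_{\Omega_n}(p,q)\le d_{\widehat{\Omega}}(p,q)$, I would fix $\varepsilon>0$ and choose a piecewise $\mathcal{C}^1$ curve $\gamma\colon[0,1]\to\widehat{\Omega}$ from $p$ to $q$ whose $F_{\widehat{\Omega}}$-length is at most $d_{\widehat{\Omega}}(p,q)+\varepsilon$. Its image $K_0=\gamma([0,1])$ is a compact subset of $\widehat{\Omega}=\{r_P<0\}$, and since $r_n\to r_P$ uniformly on $K_0$ we have $K_0\subset\Omega_n$ for all large $n$; thus $\gamma$ is admissible for $d_{\Omega_n}$. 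Theorem~\ref{thm4} provides uniform convergence of the integrands $F_{\Omega_n}(\gamma(t);\gamma'(t))\to F_{\widehat{\Omega}}(\gamma(t);\gamma'(t))$, and integrating yields $\limsup_n d_{\Omega_n}(p,q)\le d_{\widehat{\Omega}}(p,q)+\varepsilon$.

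For the lower bound, the crux of the theorem, I would for each $n$ take a near-geodesic $\gamma_n\colon[0,1]\to\Omega_n$ from $p$ to $q$, parameterized proportionally to $F_{\Omega_n}$-arclength, with total length $L_n\le d_{\Omega_n}(p,q)+1/n$. The upper bound already gives $L_n\le R$ for some $R$ independent of $n$. The key claim is then that the images $\gamma_n([0,1])$ all lie in a common compact subset $K\subset\widehat{\Omega}$. To establish this, I would invoke the global estimate $A^{-1}M_{r_n}\le F_{\Omega_n}\le AM_{r_n}$ extracted from the proof of Lemma~\ref{lem4}, together with the polydisc geometry of the $Q_\delta(\xi_n)$ and the engulfing property of Remark~\ref{remark4}: an $F_{\Omega_n}$-ball of radius $R$ around $p$ is controlled, uniformly in $n$, by a concrete Euclidean set which is relatively compact in $\widehat{\Omega}$, precisely the content of \cite[Proposition~3.6]{verma2015corankone}. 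Once $\gamma_n\subset K$, uniform convergence of $F_{\Omega_n}$ to $F_{\widehat{\Omega}}$ on $K$ and the Euclidean equivalence of $F_{\widehat{\Omega}}$ on $K$ give a uniform Lipschitz bound on $\gamma_n$; Arzel\`a--Ascoli then produces a subsequence converging uniformly to a Lipschitz curve $\gamma\colon[0,1]\to\widehat{\Omega}$ from $p$ to $q$. Lower semicontinuity of length under uniform convergence combined with uniform convergence of integrands gives
\[
d_{\widehat{\Omega}}(p,q)\le\int_0^1 F_{\widehat{\Omega}}(\gamma;\gamma')\,dt\le\liminf_{n\to\infty}\int_0^1 F_{\Omega_n}(\gamma_n;\gamma_n')\,dt=\liminf_{n\to\infty} L_n,
\]
which is the desired lower bound.

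The main obstacle is the compactness-trapping claim for the near-geodesics $\gamma_n$. Unlike in Fiacchi's $\mathbb{C}^2$ setting, the limit $\widehat{\Omega}$ here is an unbounded domain in $\mathbb{C}^{d+1}$, so one has to rule out both approach to $\partial\widehat{\Omega}$ and escape to infinity along the $\zeta_2,\ldots,\zeta_d$ directions, with constants uniform in $n$. The Kobayashi metric estimate of Lemma~\ref{lem4} is calibrated precisely for this, and the cited Proposition~3.6 of Verma carries out the polydisc bookkeeping needed; everything else in the proof is then standard.
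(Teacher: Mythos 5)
Your proposal is correct in substance, but for the crucial lower bound it takes a genuinely different route from the paper. The upper bound is the same as the paper's Lemma~\ref{lem2} (fix a near-optimal curve in $\widehat{\Omega}$, note it lies in $\Omega_n$ for large $n$, integrate using Theorem~\ref{thm4}). For the lower bound, both arguments ultimately rest on the same external input — the uniform compact containment of the scaled Kobayashi balls $\mathbf{B}_{\Omega_n}(v_n,R)$ in $\widehat{\Omega}$, which is \cite[Lemma~3.4]{verma2015corankone} (the paper's Lemma~\ref{lem3}; your reference to Proposition~3.6 there is a slight misattribution, as that is the distance-stability statement whose technique the paper adapts). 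But the mechanisms differ: you trap almost-minimizing curves $\gamma_n$ of $\Omega_n$ in a fixed compact $K\Subset\widehat{\Omega}$ (via the ball inclusions, exactly the role of the paper's Lemma~\ref{lem7} plus Lemma~\ref{lem3}) and then pass to the limit in the length integrals using the uniform convergence $F_{\Omega_n}\to F_{\widehat{\Omega}}$ on $K$, whereas the paper never touches near-geodesics: it applies the Kim--Krantz localization lemma \cite[Lemma~2.1]{krantz2009kobayashibunwong} (Lemma~\ref{lem1}) to the large Kobayashi ball $D'=\mathbf{B}_{\Omega_n}((-1,'0),2R')$, obtains $d_{D'}(p,q)\leq(1-\epsilon)^{-1}d_{\Omega_n}(p,q)$, and concludes by monotonicity since $D'\subset\widehat{\Omega}$. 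Your route is more self-contained (no $\tanh$-localization lemma) but requires the extra care you acknowledge; note also that once $\gamma_n\subset K\subset\widehat{\Omega}$ you can skip Arzel\`a--Ascoli and lower semicontinuity altogether, since $\gamma_n$ is itself admissible for $d_{\widehat{\Omega}}$ and, writing $|F_{\Omega_n}(z;X)-F_{\widehat{\Omega}}(z;X)|\leq\epsilon_n\|X\|$ on $K$ with $\epsilon_n\to0$ and using the uniform Euclidean length bound for $\gamma_n$ on $K$, one gets $d_{\widehat{\Omega}}(p,q)\leq\int_0^1F_{\widehat{\Omega}}(\gamma_n;\gamma_n')\,dt\leq L_n+C\epsilon_n$ directly. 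Finally, your uniformity-on-compacts remark should be fleshed out as in the paper, using complete hyperbolicity of $\widehat{\Omega}$ to place all the chosen curves for $p,q\in K$ in one compact $K'$.
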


The proof is divided into several lemmas, first we have the following:

\begin{lem}\label{lem2}
For any $p,q\in\widehat{\Omega}$,
\begin{equation*}
\limsup_{n\to\infty}d_{\Omega_n}(p,q)\leq d_{\widehat{\Omega}}(p,q),
\end{equation*}
and the convergence is uniform on compact subsets of $\widehat{\Omega}\times\widehat{\Omega}$.
\end{lem}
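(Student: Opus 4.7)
The plan is to run the standard upper-semicontinuity argument for integrated Kobayashi distances: approximate $d_{\widehat{\Omega}}(p,q)$ from above by the length of a nearly optimal piecewise $\mathcal{C}^1$ curve in $\widehat{\Omega}$, then use the same curve as a test curve in $\Omega_n$ for $n$ large, exploiting the locally uniform convergence $F_{\Omega_n}\to F_{\widehat{\Omega}}$ supplied by Theorem~\ref{thm4}.

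Fix $(p,q) \in \widehat{\Omega} \times \widehat{\Omega}$ and $\varepsilon > 0$. First I would choose a piecewise $\mathcal{C}^1$ curve $\gamma\colon [0,1] \to \widehat{\Omega}$ from $p$ to $q$ satisfying
$$\int_0^1 F_{\widehat{\Omega}}(\gamma(t); \gamma'(t))\, dt \leq d_{\widehat{\Omega}}(p,q) + \varepsilon.$$
Since $\gamma([0,1])$ is compact in $\widehat{\Omega}=\{r_P<0\}$ and $r_n \to r_P$ locally uniformly on $\mathbb{C}^{d+1}$ (Theorem~\ref{thm5}(i)), one has $\gamma([0,1]) \subset \Omega_n$ for all sufficiently large $n$. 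The set $\{(\gamma(t), \gamma'(t)) : t \in [0,1]\}$ is compact in $\widehat{\Omega} \times \mathbb{C}^{d+1}$, so Theorem~\ref{thm4} yields $F_{\Omega_n}(\gamma(t); \gamma'(t)) \to F_{\widehat{\Omega}}(\gamma(t); \gamma'(t))$ uniformly in $t \in [0,1]$. By bounded convergence and the definition of $d_{\Omega_n}$,
$$\limsup_{n\to\infty}d_{\Omega_n}(p,q) \leq \lim_{n\to\infty}\int_0^1 F_{\Omega_n}(\gamma(t); \gamma'(t))\, dt = \int_0^1 F_{\widehat{\Omega}}(\gamma(t); \gamma'(t))\, dt \leq d_{\widehat{\Omega}}(p,q) + \varepsilon,$$
and letting $\varepsilon \to 0$ gives the pointwise bound.

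For the uniform version on a compact set $K \subset \widehat{\Omega} \times \widehat{\Omega}$, I would invoke the complete hyperbolicity of $\widehat{\Omega}$ (Remark~\ref{remark1}): closed Kobayashi balls in $\widehat{\Omega}$ are compact. Continuity of $d_{\widehat{\Omega}}$ together with compactness of $K$ gives $M:=\sup_K d_{\widehat{\Omega}} < \infty$, and after fixing a base point $z_0 \in \widehat{\Omega}$ one may select, for each $(p,q) \in K$, an almost-optimal connecting curve of length at most $d_{\widehat{\Omega}}(p,q) + \varepsilon$ whose image lies in the compact set $\c{B}_{\widehat{\Omega}}(z_0, R)$ for some $R = R(K,\varepsilon) < \infty$. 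Covering $K$ by finitely many small product neighborhoods on which a single representative test curve works simultaneously (up to triangle-inequality errors of order $\varepsilon$) and applying the pointwise argument to the finitely many representatives, one reduces to uniform convergence of the corresponding length integrals over a single compact subset of $\widehat{\Omega}$, which Theorem~\ref{thm4} provides.

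The main obstacle will be exactly this uniform step: one has to arrange the almost-optimal test curves so that their images jointly remain in a fixed compact subset of $\widehat{\Omega}$, independent of $(p,q) \in K$, so that one can pass to a single $n$ beyond which all curves lie in $\Omega_n$ simultaneously. Without complete hyperbolicity the curves could drift toward $\partial\widehat{\Omega}$ and fail to be contained in the $\Omega_n$ uniformly; with it, the curves are confined to compact Kobayashi balls, and local uniform convergence $r_n \to r_P$ together with Theorem~\ref{thm4} upgrades the pointwise length convergence to the desired uniform statement.
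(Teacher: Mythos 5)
Your proposal is correct and follows essentially the same route as the paper: take an almost-optimal test curve, confine it to a fixed compact subset of $\widehat{\Omega}$ using the uniform bound on $d_{\widehat{\Omega}}$ over $K$ together with complete hyperbolicity, and then apply the locally uniform convergence $F_{\Omega_n}\to F_{\widehat{\Omega}}$ of Theorem~\ref{thm4} to the length integrals. The only difference is your final covering-by-product-neighborhoods step, which is superfluous: once all near-optimal curves lie in one compact set $K'$, the uniform convergence of the metrics on $K'$ (with bounded derivatives of the curves) already gives a single $n_0$ valid for all $(p,q)\in K$, exactly as in the paper.
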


\begin{proof}
Let $K\Subset\widehat{\Omega}$ be a compact subset. Thus, there is a uniform constant $R'>0$ such that $d_{\widehat{\Omega}}(p,q)\leq R'$ for any $p,q\in K$. Let $\gamma\colon[0,1]\to\widehat{\Omega}$ be a piecewise $\mathcal{C}^1$-curve such that $\gamma(0)=p$, $\gamma(1)=q$ and
\[
\int^1_0F_{\widehat{\Omega}}(\gamma(t);\gamma'(t))dt\leq d_{\widehat{\Omega}}(p,q)+\epsilon/2.
\]
As $K$ is compact, there exists a compact subset $K'\Subset\widehat{\Omega}$ which only depends on $K$ and $\epsilon$ such that $K\subset K'$ and $\gamma\subset K'$ for all $p,q\in K$. (Here the existence of $K'$ is ensured by the uniform constant $R'$ and the complete hyperbolicity of $\widehat{\Omega}$.)

Now by the uniform convergence of the Kobayashi metric on $K'$ (Theorem~\ref{thm4}), for all $n$ sufficiently large, one gets
\begin{align*}
d_{\Omega_n}(p,q)&\leq\int^1_0F_{\Omega_n}(\gamma(t);\gamma'(t))dt\\
&\leq\int^1_0F_{\widehat{\Omega}}(\gamma(t);\gamma'(t))dt+\epsilon/2\\
&\leq d_{\widehat{\Omega}}(p,q)+\epsilon.
\end{align*}
Thus
\[
\limsup_{n\to\infty}d_{\Omega_n}(p,q)\leq d_{\widehat{\Omega}}(p,q)+\epsilon
\]
and the proof is complete.
\end{proof}

Next we show the opposite inequality. Let
\[
v_n:=(-2\partial r/\partial\overline{z}_0(\xi_n),'0)\in\mathbb{C}\times\mathbb{C}^d\quad\text{and thus}\quad \psi_n(u_n)=v_n\to(-1,'0)
\]
in Theorem~\ref{thm5}. For an arbitrary hyperbolic domain $D\subset\mathbb{C}^{d+1}$, let $d_D$ be the Kobayashi distance on $D$. Denote by
\[
\mathbf{B}_D(x,\delta):=\big\{p\in D : d_D(x,p)<\delta\big\}
\]
the Kobayashi ball centered at $x\in D$ with radius $\delta>0$.

\begin{remark}\label{remark2}
By the above lemma, for each $\nu>0$, there exists a $R_\nu>0$ such that
\[
\mathbf{B}_{\widehat{\Omega}}((-1,'0),\nu)\subset \mathbf{B}_{\Omega_n}((-1,'0),R_\nu)
\]
for all $n>N$ large. In fact, for $x\in\mathbf{B}_{\widehat{\Omega}}((-1,'0),\nu)$, the previous lemma implies that there exists $N>0$ such that
\[
d_{\Omega_n}((-1,'0),x)\leq d_{\widehat{\Omega}}((-1,'0),x)+1/2< \nu+1:=R_\nu
\]
for all $n>N$ and consequently $x\in\mathbf{B}_{\Omega_n}((-1,'0),R_\nu)$.
\end{remark}

\begin{lem}[{\cite[Lemma~3.4.]{verma2015corankone}}]\label{lem3}
For any $R>0$ fixed, $\mathbf{B}_{\Omega_n}(v_n,R)$ is uniformly compactly contained in $\widehat{\Omega}$  for all $n$ large.
\end{lem}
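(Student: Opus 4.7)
The goal is to show that $\mathbf{B}_{\Omega_n}(v_n,R)$ stays inside a fixed compact subset of $\widehat{\Omega}$ for all large $n$. Since $\psi_n$ is a biholomorphism and hence a Kobayashi isometry, $d_{\Omega_n}(v_n,w)=d_\Omega(u_n,\psi_n^{-1}(w))$, so the claim is equivalent to $\psi_n(\mathbf{B}_\Omega(u_n,R))$ being contained in a fixed compact subset of $\widehat{\Omega}$. Because compact subsets of $\widehat{\Omega}$ are characterised by bounded Euclidean norm together with uniformly positive distance from $\partial\widehat{\Omega}$, I would split the verification into a depth estimate and a tangential boundedness estimate.

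For the depth estimate, I would integrate the lower bound $F_\Omega(z;X)\geq A^{-1}|b_0(X)|/|r(z)|$ from Theorem~\ref{thm3} along any near-minimising curve $\gamma$ from $u_n$ to $y_n:=\psi_n^{-1}(w_n)$ inside $\Omega\cap W$. Because the defining function $r\circ\Phi_n^{-1}$ has $\re\zeta_0$ as its leading real part and $L_0^r$ projects onto $\partial/\partial\zeta_0$, one can bound $|b_0(\gamma'(t))|$ below by a positive constant times $|d|r(\gamma(t))|/dt|$ after shrinking $W$, which yields
\[
d_\Omega(u_n,y_n)\geq\frac{1}{2A}\bigg|\log\frac{|r(y_n)|}{|r(u_n)|}\bigg|-C.
\]
Since $|r(u_n)|\simeq\epsilon_n$ and $|r_n(w_n)|=|r(y_n)|/\epsilon_n$, the hypothesis $d_\Omega(u_n,y_n)\leq R$ forces $|r_n(w_n)|\geq c_R>0$ uniformly in $n$, so $w_n$ stays at positive distance from $\partial\widehat{\Omega}$.

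For the tangential estimate, I would establish the polydisc containment $\mathbf{B}_\Omega(u_n,R)\subset Q_{C_R\epsilon_n}(\xi_n)$ with $C_R=e^{cR}$, and then push forward by $\psi_n=\Lambda_n\circ\Phi_n$ to obtain the Euclidean polydisc
\[
\psi_n(Q_{C_R\epsilon_n}(\xi_n))=\big\{|\zeta_k|<\tau_k(\xi_n,C_R\epsilon_n)/\tau_k(\xi_n,\epsilon_n)\big\},
\]
whose sides are at most $C_R$ in the $\zeta_0$-direction and at most $C_R^{1/2}$ in every other direction (since the exponent $1/\ell$ appearing in $\tau_k$ is at most $1/2$), independent of $n$. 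The polydisc containment itself rests on two ingredients: first, Cho's upper bound $F_\Omega\leq AM_r$ applied to straight segments in $\Phi_n$-coordinates inside $R_\delta(\xi_n)$ shows that the Kobayashi diameter of each $Q_\delta(\xi_n)$ inside $\Omega$ is bounded by a universal constant; second, the engulfing property of Remark~\ref{remark4} permits assembling these bounded-diameter blocks into a chain $Q_{\epsilon_n}\subset Q_{C_e\epsilon_n}\subset\cdots\subset Q_{C_e^k\epsilon_n}$ whose Kobayashi radius grows linearly in $k$, so the Kobayashi ball of radius $R$ sits inside $O(R)$ levels of the chain.

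The main technical obstacle is the tangential polydisc containment: the depth estimate is essentially one-dimensional, but the tangential estimate requires combining the engulfing property with Cho's bounds in a chain-type argument analogous to Balogh--Bonk-type reasoning for strongly pseudoconvex domains, and the exponents $\tau_k$ must be handled simultaneously rather than direction by direction. Once both estimates are in place, $\psi_n(\mathbf{B}_\Omega(u_n,R))$ lies in a fixed bounded polydisc at uniformly positive depth in $\widehat{\Omega}$, which is compactly contained in $\widehat{\Omega}$ and establishes the lemma.
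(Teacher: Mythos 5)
Your proposal cannot be checked against an in-paper argument, because the paper does not prove Lemma~\ref{lem3}: it is quoted from \cite[Lemma~3.4]{verma2015corankone}, and the remark following it only records that the proof rests on the engulfing property of the polydiscs. Your skeleton (reduce to $\psi_n(\mathbf{B}_\Omega(u_n,R))$, prove a depth estimate plus a tangential localization $\mathbf{B}_\Omega(u_n,R)\subset Q_{C_R\epsilon_n}(\xi_n)$, then observe that $\psi_n(Q_{C_R\epsilon_n}(\xi_n))=\Lambda_n(R_{C_R\epsilon_n}(\xi_n))$ is a polydisc of uniformly bounded size since $\tau_k(\xi_n,C\delta)\leq C^{1/2}\tau_k(\xi_n,\delta)$) is indeed the right one and is in the spirit of the cited reference; the normal estimate via the $b_0$-term of Theorem~\ref{thm3} is standard, although you do not address the localization issue that a near-minimizing curve from $u_n$ to $y_n$ may leave $W$, where Cho's estimate is unavailable.

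The genuine gap is in the tangential containment, which is precisely the hard content of \cite{verma2015corankone}, and the mechanism you sketch for it does not work. First, the claim that ``the Kobayashi diameter of each $Q_\delta(\xi_n)$ inside $\Omega$ is bounded by a universal constant'' is false: $Q_\delta(\xi_n)$ is centered at the boundary point $\xi_n$, so $Q_\delta(\xi_n)\cap\Omega$ contains points arbitrarily close to $\partial\Omega$, which by your own depth estimate are at arbitrarily large Kobayashi distance from $u_n$. Second, and more fundamentally, even a corrected bounded-diameter statement fed into the engulfing chain only yields inclusions of the form $Q_{C_e^k\epsilon_n}(\xi_n)\cap\Omega\subset\mathbf{B}_\Omega(u_n,Dk)$, i.e.\ \emph{upper} bounds for the distance of points inside the polydiscs; the lemma needs the opposite inclusion $\mathbf{B}_\Omega(u_n,R)\subset Q_{C_R\epsilon_n}(\xi_n)$, which is a \emph{lower} bound: every curve starting at $u_n$ and exiting $Q_{\delta'}(\xi_n)$ must have Kobayashi length at least on the order of $\log(\delta'/\epsilon_n)$ minus a constant. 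That is the content of the distance estimates of \cite[Theorem~1.1]{verma2015corankone}, obtained by integrating the full lower bound $F_\Omega\gtrsim\sum_k|b_k|/\tau_k(z,|r(z)|)$ (all components, not only the $b_0$-term) and using the engulfing property, in the form of Remark~\ref{remark4}, to compare $\tau_k$ at the moving point of the curve with $\tau_k(\xi_n,\delta')$, so that crossing each coordinate wall of the polydisc costs a definite amount of length. Your sketch never produces such a lower bound, so the central step of the proof is missing.
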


\begin{remark}
This lemma relies on the engulfing property of the polydiscs in Remark~\ref{remark4}, and it is not clear whether it holds when $\Omega$ is unbounded and $v_n\to\infty$.
\end{remark}

\begin{lem}\label{lem7}
For any compact subset $K\Subset\widehat{\Omega}$, there exists $C_K>0$ which only depends on $K$, such that for all $R>0$ fixed, there exists $N>0$ such that
\[
\mathbf{B}_{\Omega_n}(\varpi,R)\subset\mathbf{B}_{\Omega_n}((-1,'0),C_K+1+R),\quad\varpi\in K
\]
for all $n>N$.
\end{lem}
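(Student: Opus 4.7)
The plan is to reduce the statement to a triangle-inequality argument, with the main input being the uniform upper bound for $d_{\Omega_n}$ on compact subsets of $\widehat{\Omega}\times\widehat{\Omega}$ obtained in Lemma~\ref{lem2}.

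First I would observe that $(-1,'0)\in\widehat{\Omega}$, since $r_P(-1,'0)=-1+P(0)+0=-1<0$. Because $\widehat{\Omega}$ is complete hyperbolic (Remark~\ref{remark1}), $d_{\widehat{\Omega}}$ is a genuine continuous distance, so the quantity
\[
C_K:=\sup_{\varpi\in K}d_{\widehat{\Omega}}\bigl((-1,'0),\varpi\bigr)
\]
is finite and depends only on $K$. This is the constant I would take.

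Next I would apply Lemma~\ref{lem2} to the compact set $\{(-1,'0)\}\times K\Subset\widehat{\Omega}\times\widehat{\Omega}$. The uniform version of that lemma gives an $N$ (independent of $R$) such that for every $n>N$ and every $\varpi\in K$,
\[
d_{\Omega_n}\bigl((-1,'0),\varpi\bigr)\le d_{\widehat{\Omega}}\bigl((-1,'0),\varpi\bigr)+1\le C_K+1.
\]
Then for any $z\in \mathbf{B}_{\Omega_n}(\varpi,R)$, the triangle inequality for $d_{\Omega_n}$ yields
\[
d_{\Omega_n}\bigl((-1,'0),z\bigr)\le d_{\Omega_n}\bigl((-1,'0),\varpi\bigr)+d_{\Omega_n}(\varpi,z)<(C_K+1)+R,
\]
which is precisely the desired inclusion.

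There is essentially no obstacle here beyond bookkeeping; the content is entirely packaged in the uniform-on-compacta part of Lemma~\ref{lem2}, and the triangle inequality. The one point to verify carefully is that $(-1,'0)$ really belongs to $\widehat{\Omega}$ (so that $C_K$ makes sense and Lemma~\ref{lem2} applies at this basepoint), which follows immediately from $P(0)=0$ in Theorem~\ref{thm5}.
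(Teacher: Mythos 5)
Your proof is correct and is essentially identical to the paper's: the same constant $C_K=\max_{\varpi\in K}d_{\widehat{\Omega}}((-1,'0),\varpi)$, the same appeal to the uniform convergence in Lemma~\ref{lem2} to bound $d_{\Omega_n}((-1,'0),\varpi)$ on $K$, followed by the same triangle inequality. The only (harmless) differences are cosmetic: the paper uses the tolerance $1/2$ instead of $1$ and notes explicitly that $K\subset\Omega_n$ for $n$ large, while you add the check that $(-1,'0)\in\widehat{\Omega}$.
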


\begin{proof}
First assume that $K\subset\Omega_n$ for all $n$ large since $\Omega_n\to\widehat{\Omega}$. Set $C_K:=\max\{d_{\widehat{\Omega}}((-1,'0),y) : y\in K\}$, by Lemma~\ref{lem2}, there exist $N>0$ such that
\[
d_{\Omega_n}((-1,'0),\varpi)\leq d_{\widehat{\Omega}}((-1,'0),\varpi)+1/2\leq C_K+1/2,\quad \varpi\in K
\]
for all $n>N$ since $K$ is compact.

Thus for any $R>0$ and any $x\in\mathbf{B}_{\Omega_n}(\varpi,R)$, we have
\[
d_{\Omega_n}((-1,'0),x)\leq d_{\Omega_n}((-1,'0),\varpi)+d_{\Omega_n}(\varpi,x)< C_K+1+R.
\]
for all $n>N$, which indicates
\[
\mathbf{B}_{\Omega_n}(\varpi,R)\subset\mathbf{B}_{\Omega_n}((-1,'0),C_K+1+R)
\]
uniformly for all $n>N$ and $\varpi\in K$ and the proof is complete.
\end{proof}

Now we need the following lemma (see for instance \cite[Lemma~2.1.]{krantz2009kobayashibunwong}):

\begin{lem}\label{lem1}
Let $D$ be a Kobayashi hyperbolic domain in $\cc^n$ with a sub-domain $D'\subset D$. Let $p,q\in D'$, $d_D(p,q)=a$ and $b>a$. If $D'$ satisfies the condition $\mathbf{B}_{D}(q,b)\subset D'$, then the following inequality holds:
\begin{equation*}
d_{D'}(p,q)\leq\frac{1}{\tanh(b-a)}d_D(p,q).
\end{equation*}
\end{lem}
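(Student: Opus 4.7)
The plan is to prove this via an infinitesimal comparison of the Kobayashi metrics on $D$ and $D'$, followed by integration along a near-geodesic in $D$. The underlying idea is that any holomorphic disc $\Delta\to D$ based at a point $x$ with $d_D(q,x)<b$, when restricted to a sufficiently small sub-disc, automatically has image inside $\mathbf{B}_D(q,b)\subset D'$; the restriction then gives an admissible disc for computing $F_{D'}(x;\cdot)$, at the cost of a factor coming from the Poincar\'e metric on $\Delta$.

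First I would establish the pointwise inequality
\[
F_{D'}(x;v)\leq\frac{F_D(x;v)}{\tanh(b-d_D(q,x))}\qquad\text{for every }x\in\mathbf{B}_D(q,b),\ v\in\mathbb{C}^n.
\]
Given a holomorphic disc $f\colon\Delta\to D$ with $f(0)=x$ and $df_0\lambda=v$, set $a_x:=d_D(q,x)<b$ and $r:=\tanh(b-a_x)\in(0,1)$. The distance-decreasing property of the Kobayashi metric yields, for $w\in\Delta$ with $|w|<r$,
\[
d_D(x,f(w))\leq d_\Delta(0,w)=\tanh^{-1}|w|<b-a_x,
\]
so the triangle inequality forces $f(w)\in\mathbf{B}_D(q,b)\subset D'$. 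The rescaled disc $g(z):=f(rz)$ therefore maps $\Delta$ into $D'$ with $g(0)=x$ and $dg_0(\lambda/r)=v$, giving $F_{D'}(x;v)\leq|\lambda|/r$. Taking the infimum over all admissible pairs $(f,\lambda)$ yields the claimed inequality.

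Next, fix $\epsilon>0$ small enough that $a+\epsilon<b$ and choose a piecewise $\mathcal{C}^1$ curve $\sigma\colon[0,1]\to D$ from $q$ to $p$ with
\[
\int_0^1 F_D(\sigma(t);\sigma'(t))\,dt\leq a+\epsilon.
\]
Setting $a(t):=\int_0^t F_D(\sigma(s);\sigma'(s))\,ds$ gives $d_D(q,\sigma(t))\leq a(t)\leq a+\epsilon<b$, so $\sigma(t)\in\mathbf{B}_D(q,b)\subset D'$ for every $t\in[0,1]$, and $\sigma$ is an admissible competitor for $d_{D'}(p,q)$. Applying the infinitesimal inequality pointwise and bounding $\tanh(b-a(t))\geq\tanh(b-a-\epsilon)$, I would conclude
\[
d_{D'}(p,q)\leq\int_0^1 F_{D'}(\sigma(t);\sigma'(t))\,dt\leq\frac{1}{\tanh(b-a-\epsilon)}\int_0^1 F_D(\sigma(t);\sigma'(t))\,dt\leq\frac{a+\epsilon}{\tanh(b-a-\epsilon)}.
\]
Letting $\epsilon\to 0^+$ gives the stated inequality.

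The main work is the infinitesimal comparison; the integration step is routine, with the only subtlety being the need to parametrize the near-geodesic so that its arclength from $q$ is monotone, ensuring it stays inside $\mathbf{B}_D(q,b)$. No compactness or completeness assumption on $D'$ is needed beyond the hypothesis $\mathbf{B}_D(q,b)\subset D'$.
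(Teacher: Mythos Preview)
Your proof is correct. The paper does not supply its own argument for this lemma; it simply quotes the result from \cite[Lemma~2.1]{krantz2009kobayashibunwong}. Your approach---restricting any competing analytic disc $f\colon\Delta\to D$ to the sub-disc of radius $r=\tanh(b-d_D(q,x))$ so that it lands in $\mathbf{B}_D(q,b)\subset D'$, thereby obtaining the pointwise bound $F_{D'}\leq F_D/\tanh(b-d_D(q,\cdot))$, and then integrating along a near-minimizing curve in $D$ starting at $q$---is exactly the standard proof of this localization inequality and is almost certainly what Kim--Krantz do. One small remark: the comment in your last paragraph about needing the arclength to be monotone is unnecessary; the cumulative length $a(t)=\int_0^t F_D(\sigma;\sigma')$ is automatically non-decreasing, and the only thing you use is the trivial bound $a(t)\leq a+\epsilon$, which holds for any parametrization.
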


\begin{proof}[Proof of Theorem~\ref{thm1}]
Let $K\Subset\widehat{\Omega}$ such that $(-1,'0)\in K$. As $\widehat{\Omega}$ is complete hyperbolic, it follows that
\[
\widehat{\Omega}=\bigcup_{\nu=1}\mathbf{B}_{\widehat{\Omega}}((-1,'0),\nu).
\]
As $K$ is compact, there exists a $\nu_0$ such that $K\subset\mathbf{B}_{\widehat{\Omega}}((-1,'0),\nu_0)$ and Remark~\ref{remark2} implies that $\mathbf{B}_{\widehat{\Omega}}((-1,'0),\nu_0)\subset\mathbf{B}_{\Omega_n}((-1,'0),R_0)$ for some $R_0>0$ for all $n$ large. Thus we have
\[
K\subset\mathbf{B}_{\widehat{\Omega}}((-1,'0),\nu_0)\subset\mathbf{B}_{\Omega_n}((-1,'0),R_0)
\]
for all $n$ large.

Next, for any $p,q\in K$, we have
\[
d_{\Omega_n}(p,q)\leq d_{\Omega_n}(p,(-1,'0))+d_{\Omega_n}((-1,'0),q)\leq 2R_0.
\]
For small $\epsilon>0$, let $R'$ be a number such that $R'\gg \max\{4R_0,2(C_K+1)\}$ and
\[
\tanh(R'-2R_0)\geq 1-\epsilon,
\]
then Lemma~\ref{lem7} reveals that
\begin{equation*}
\mathbf{B}_{\Omega_n}(p,R')\subset \mathbf{B}_{\Omega_n}((-1,'0),C_K+1+R')\subset \mathbf{B}_{\Omega_n}((-1,'0),2R')
\end{equation*}
uniformly for $p\in K$ for all $n$ large. By Lemma~\ref{lem1} ($\Omega_n$ and $\mathbf{B}_{\Omega_n}((-1,'0),2R')$ play the role of $D$ and $D'$, and let $b=R'$, respectively), we have
\begin{equation*}
d_{\mathbf{B}_{\Omega_n}((-1,'0),2R')}(p,q)\leq\frac{d_{\Omega_n}(p,q)}{\tanh(R'-d_{\Omega_n}(p,q))}
\leq\frac{d_{\Omega_n}(p,q)}{\tanh(R'-2R_0)}\leq\frac{d_{\Omega_n}(p,q)}{1-\epsilon}.
\end{equation*}

Also there exists $N>0$ such that
\[
\mathbf{B}_{\Omega_n}((-1,'0),2R')\subset \mathbf{B}_{\Omega_n}(v_n,2R'+1)\subset\widehat{\Omega}
\]
for all $n>N$ large, where the second inclusion comes from Lemma~\ref{lem3}. In fact, we can pick $\delta>0$ such that $\mathbb{B}((-1,'0),2\delta)\subset\Omega_n$ and $v_n\in\mathbb{B}((-1,'0),\delta)$ for all $n>N$ for some integer $N$, where $\mathbb{B}((-1,'0),2\delta)$ is the Euclidean ball centered at $(-1,'0)$ with radius $2\delta$. Thus for any $x\in\mathbf{B}_{\Omega_n}((-1,'0),2R')$, the decreasing property of the Kobayashi distance reveals that
\begin{align*}
d_{\Omega_n}(x,v_n)&\leq d_{\Omega_n}(x,(-1,'0))+d_{\Omega_n}((-1,'0),v_n)\\
&\leq 2R'+d_{\mathbb{B}((-1,'0),2\delta)}((-1,'0),v_n)<2R'+1
\end{align*}
for all $n>N$.

Consequently we have
\[
d_{\widehat{\Omega}}(p,q)\leq d_{\mathbf{B}_{\Omega_n}((-1,'0),2R')}(p,q)\leq\frac{d_{\Omega_n}(p,q)}{1-\epsilon}.
\]
Let $n\to\infty$ we have
\[
d_{\widehat{\Omega}}(p,q)\leq\frac{1}{1-\epsilon}\liminf_{n\to\infty}d_{\Omega_n}(p,q).
\]
Now Lemma~\ref{lem2} and the fact that $\epsilon>0$ is arbitrary complete the proof.
\end{proof}


\section{Geodesics in the limit domains}\label{section6}

In this section, we will study the boundary behaviors of the Kobayashi $(A,0)$ quasi geodesics in the limit domain $\widehat{\Omega}$, where $\widehat{\Omega}$ is obtained by $u_\infty\in\partial\Omega$, $\{u_n\}\subset\Omega$ in Theorem~\ref{thm5}. First we have the following lemma:
\begin{lem}\label{lem5}
For each $x=(x_0,x_1,\ldots,x_n)\in\partial\widehat{\Omega}$ and $a>0$, the curve
\[
\sigma(t):=x-(ae^{-t},0,\ldots,0),\quad t\in\mathbb{R}
\]
is an $(A,0)$ quasi geodesic with respect to the Kobayashi metric.
\end{lem}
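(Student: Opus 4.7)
The plan is to establish the two-sided bound $A^{-1}|t_2-t_1|\leq d_{\widehat{\Omega}}(\sigma(t_1),\sigma(t_2))\leq A|t_2-t_1|$ directly from the explicit estimate of the Kobayashi metric in Lemma~\ref{lem4}. First I would record two algebraic facts that follow from the special shape of $\sigma$: since only the $\zeta_0$-coordinate moves and $r_P$ depends on $\zeta_0$ only through $\re(\zeta_0)$,
\[
r_P(\sigma(t))=r_P(x)-ae^{-t}=-ae^{-t},\qquad \sigma'(t)=(ae^{-t},0,\ldots,0).
\]
In particular $\sigma(t)\in\widehat{\Omega}$ for every $t\in\rr$.

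For the upper bound I would substitute $\sigma'(t)$ into the formula for $M_{r_P}$ in Lemma~\ref{lem4}. Because $Y_1=\cdots=Y_d=0$, the second and third summands vanish, and the first summand reduces to $|ae^{-t}|/|r_P(\sigma(t))|=1$. Hence $M_{r_P}(\sigma(t);\sigma'(t))\equiv 1$, so Lemma~\ref{lem4} gives $F_{\widehat{\Omega}}(\sigma(t);\sigma'(t))\leq A$, and integration along $\sigma$ yields $d_{\widehat{\Omega}}(\sigma(t_1),\sigma(t_2))\leq A|t_2-t_1|$.

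For the lower bound I would introduce the auxiliary height function
\[
\phi(\zeta):=-\log(-r_P(\zeta))\quad\text{on }\widehat{\Omega},
\]
which along our curve satisfies $\phi(\sigma(t))=t-\log a$, so $|\phi(\sigma(t_2))-\phi(\sigma(t_1))|=|t_2-t_1|$. The key step is to prove that $\phi$ is globally Lipschitz with respect to $d_{\widehat{\Omega}}$. Since $\partial r_P(Y)=\tfrac{1}{2}\big(Y_0+2P'(\zeta_1)Y_1+2\sum_{\alpha\geq 2}\overline{\zeta}_\alpha Y_\alpha\big)$, the first summand of $M_{r_P}(\zeta;Y)$ in Lemma~\ref{lem4} is precisely $2|\partial r_P(Y)|/|r_P(\zeta)|=2|\partial\phi(Y)|$. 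Therefore
\[
|\partial\phi(Y)|\leq\tfrac{1}{2}M_{r_P}(\zeta;Y)\leq\tfrac{A}{2}F_{\widehat{\Omega}}(\zeta;Y).
\]
Because $\phi$ is real-valued, for any piecewise $\mathcal{C}^1$-curve $\gamma$ one has $\tfrac{d}{dt}\phi(\gamma(t))=2\re[\partial\phi(\gamma'(t))]$, hence $|\tfrac{d}{dt}\phi(\gamma(t))|\leq A\,F_{\widehat{\Omega}}(\gamma(t);\gamma'(t))$. Integrating and taking the infimum over curves from $p$ to $q$ gives $|\phi(p)-\phi(q)|\leq A\,d_{\widehat{\Omega}}(p,q)$. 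Applied to $p=\sigma(t_1)$, $q=\sigma(t_2)$ this yields $|t_2-t_1|\leq A\,d_{\widehat{\Omega}}(\sigma(t_1),\sigma(t_2))$, completing the proof with the same constant $A$ (possibly enlarged once to absorb the factor $\tfrac12$).

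There is no serious obstacle here; the argument is essentially a direct unpacking of Lemma~\ref{lem4}. The main conceptual point is the observation that $\phi=-\log(-r_P)$ serves as a ``scalar potential'' adapted to the normal direction: its holomorphic differential is bounded above by exactly the first summand of $M_{r_P}$, while the complex-tangential summands of $M_{r_P}$ (the ones carrying the anisotropy of the metric) do not interfere because $\sigma$ moves only in the normal $\zeta_0$-direction. This approach parallels the analogous computation in \cite{Fiacchi2020Gromov} for pseudoconvex domains in $\cc^2$.
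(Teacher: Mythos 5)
Your proposal is correct and follows essentially the same route as the paper: the upper bound comes from computing $M_{r_P}(\sigma(t);\sigma'(t))\equiv1$ and invoking Lemma~\ref{lem4}, while your Lipschitz estimate for $\phi=-\log(-r_P)$ is exactly the paper's inequality $d_{\widehat{\Omega}}(p,q)\geq A^{-1}\big|\log\big(r_P(p)/r_P(q)\big)\big|$, obtained there by integrating the first summand of $M_{r_P}$ along an arbitrary curve. (Note also that no enlargement of $A$ is needed, since the factor $2$ from taking the real part cancels the $\tfrac12$, as your own displayed bound already shows.)
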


\begin{proof}
For any $p,q\in\widehat{\Omega}$, let $\gamma\colon[0,1]\to\widehat{\Omega}$ be a generic piecewise $\mathcal{C}^1$ path connecting $p$ and $q$. By Lemma~\ref{lem4} we have
\begin{align}\label{eq7}
d_{\widehat{\Omega}}(p,q)&\geq A^{-1}\inf_\gamma\int^1_0 M_{r_P}(\gamma(s);\gamma'(s))ds\notag\\
&\geq A^{-1}\inf_\gamma\int^1_0\frac{|\gamma_0'(s)+2\gamma_1'(s)P'(\gamma_1(s))+2\sum^d_{\alpha=2}\gamma_\alpha'(s)
\overline{\gamma_\alpha(s)}|}{-\re(\gamma_0(s))-P(\gamma_1(s))-\sum^d_{\alpha=2}|\gamma_\alpha(s)|^2}ds\notag\\
&\geq A^{-1}\inf_\gamma\int^1_0\bigg|\frac{[r_P(\gamma(s))]'}{-r_P(\gamma(s))}\bigg|ds=A^{-1}\bigg|\log\bigg(\frac{r_P(p)}{r_P(q)}
\bigg)\bigg|.
\end{align}
Thus for $t_1<t_2$, we have $d_{\widehat{\Omega}}(\gamma(t_2),\gamma(t_1))\geq A^{-1}(t_2-t_1)$. Also a direction computation shows that
\[
\mathrm{length}(\sigma_{|[t_1,t_2]})\leq A\int^{t_2}_{t_1}M_{r_P}(\gamma(s);\gamma'(s))ds=A(t_2-t_1)
\]
and it follows that $\sigma$ is an $(A,0)$ quasi geodesic line.
\end{proof}

Now we need the uniform estimates of the Kobayashi metrics:

\begin{lem}\label{lem6}
Let $\Omega$, $\psi_n$, $P$ and $\widehat{\Omega}$ be as in Theorem~\ref{thm5}. Then for each $R>0$, there exist $c>0$ and $C>0$ such that for all $n$ large
\[
M_{r_n}(z;X)\geq\frac{c\|X\|}{|r_n(z)|^{1/{(2m)}}},\quad z\in\psi_n(\Omega)\cap \mathbb{B}(0,R),\quad X\in\mathbb{C}^{d+1}
\]
and for any $o\in\widehat{\Omega}$
\[
d_{\Omega_n}(z,o)\leq C+A\ln\bigg(\frac{1}{|r_n(z)|}\bigg),\quad z\in \psi_n(\Omega)\cap \mathbb{B}(0,R).
\]
Here $\mathbb{B}(0,R)$ is the Euclidean ball centered at the origin with radius $R$.
\end{lem}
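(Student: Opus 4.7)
The plan is to treat the two estimates separately. For the first, I write $X=\sum_{k=0}^d b_k L_k^{r_n}(z)$; since $r_n\to r_P$ and the fields $L_k^{r_P}$ are explicit, on $\overline{\mathbb{B}(0,R)}$ the change of basis between $\{L_k^{r_n}\}$ and the standard frame is bounded above and below uniformly in $n$, so $\|X\|\simeq|b_0|+\cdots+|b_d|$. Two of the three summands in $M_{r_n}$ are handled trivially on $\{|r_n(z)|\leq 1\}$: $|b_0|/|r_n(z)|\geq|b_0|/|r_n(z)|^{1/(2m)}$ and $|b_\alpha|/\sqrt{|r_n(z)|}\geq|b_\alpha|/|r_n(z)|^{1/(2m)}$ for $\alpha\geq 2$. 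The real work lies in the $b_1$-term, for which the goal is to prove $\max_{2\leq\ell\leq 2m}C_\ell^{r_n}(z)\geq c>0$ uniformly on $\overline{\mathbb{B}(0,R)}$ for $n$ large.

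This last claim reduces, by the uniform convergence $C_\ell^{r_n}\to A_\ell^P$ observed in the proof of Theorem~\ref{thm2}, to the pointwise statement $\max_\ell A_\ell^P(z_1)>0$ for every $z_1\in\mathbb{C}$. If $\max_\ell A_\ell^P(z_1^\ast)=0$ at some $z_1^\ast$, then every mixed derivative $\partial^{j+k}P/\partial z_1^j\partial\bar z_1^k(z_1^\ast)$ with $j,k>0$ vanishes; since $P$ has total degree $2m'$, its Taylor expansion at $z_1^\ast$ would consist of harmonic (i.e.\ pure $w^k$ and $\bar w^k$) monomials only, forcing $\Delta P\equiv 0$ and contradicting the hypothesis on $P$. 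Continuity and compactness promote the pointwise bound to $\max_\ell A_\ell^P\geq c(R)>0$ on $\{|z_1|\leq R\}$, and selecting an $\ell^\ast$ attaining this maximum, combined with $1/\ell^\ast\geq 1/(2m)$, yields
\[
|b_1|\sum_{\ell=2}^{2m}\left|\frac{C_\ell^{r_n}(z)}{r_n(z)}\right|^{1/\ell}\gtrsim \frac{|b_1|}{|r_n(z)|^{1/(2m)}}.
\]
Summing the three contributions settles $\{|r_n(z)|\leq 1\}$; on the complementary region $\{|r_n(z)|\geq 1\}\cap\mathbb{B}(0,R)$, $r_n$ is bounded uniformly in $n$, so the target inequality reduces to $M_{r_n}(z;X)\gtrsim\|X\|$, again by the same three-term analysis.

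For the second estimate, I take the Euclidean segment $\gamma(t)=z+(-t,0,\ldots,0)$, $t\in[0,T]$. The explicit form of $r_n$ forces $r_n(\gamma(t))=r_n(z)-t+\eta_n(t)$ where $\eta_n\to 0$ uniformly on bounded intervals of $t$, so $\gamma$ remains in $\psi_n(\Omega)$ and $|r_n(\gamma(t))|\geq|r_n(z)|+t-|\eta_n(t)|$; I choose $T$ so that $|r_n(\gamma(T))|=1$, or $T=0$ if already $|r_n(z)|\geq 1$. Writing $\gamma'(t)=-L_0^{r_n}(\gamma(t))$ gives $M_{r_n}(\gamma(t);\gamma'(t))=1/|r_n(\gamma(t))|$, and Theorem~\ref{thm3} combined with Theorem~\ref{thm2}, applied to $\Omega$ and its $\psi_n$-image, yields $F_{\Omega_n}(\gamma(t);\gamma'(t))\leq A/|r_n(\gamma(t))|$ uniformly in $n$. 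Integration produces
\[
d_{\Omega_n}(z,\gamma(T))\leq A\int_0^T\frac{dt}{|r_n(z)|+t-|\eta_n(t)|}\leq A\ln\!\frac{1}{|r_n(z)|}+C'.
\]
Since $\gamma(T)$ lies in a fixed compact subset $K\Subset\widehat{\Omega}$ independent of $n$, Theorem~\ref{thm1} bounds $d_{\Omega_n}(\gamma(T),o)$ by a uniform constant for $n$ large, and the triangle inequality concludes. The main obstacle is the non-vanishing $\max_\ell A_\ell^P>0$ and its uniform promotion to the $C_\ell^{r_n}$ via the convergence in Theorem~\ref{thm2}; everything else reduces to the standard radial-path computation in the scaled coordinates.
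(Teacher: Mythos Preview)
Your proposal is correct and follows essentially the same route as the paper. For the metric estimate, the paper records only the elementary inequality
\[
\Bigl|y+\sum_{k=1}^d a_k x_k\Bigr|+\sum_{k=1}^d|x_k|\geq\frac{1}{1+|a|}\Bigl(|y|+\sum_{k=1}^d|x_k|\Bigr),
\]
which is precisely your change-of-basis bound between the $L_k^{r_n}$-frame and the standard frame; the non-vanishing of $\max_\ell A_\ell^P$ that you prove in detail is used but not argued in the paper (it is asserted without proof in the analogous Section~\ref{section7} lemma). For the distance estimate the paper simply refers to Lemma~\ref{lem5} and \cite[Lemma~5.5]{Fiacchi2020Gromov}; unpacking that reference gives exactly your radial-curve computation---integrate $M_{r_n}$ along $t\mapsto z+(-t,'0)$ to reach a fixed compact level set of $r_n$, then use the stability of distances (Theorem~\ref{thm1}) to control the remaining piece $d_{\Omega_n}(\gamma(T),o)$.

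One small technical remark: since $r_n$ carries the error term $\eta_n$, the cleanest way to handle the integral is the substitution $s=|r_n(\gamma(t))|$, using that $\partial r_n/\partial(\mathrm{Re}\,\zeta_0)\to 1$ uniformly on compacts; this introduces a harmless multiplicative factor close to $1$, so the constant in front of $\ln(1/|r_n(z)|)$ need not be exactly the same $A$ as in Lemma~\ref{lem4}. This is immaterial for all subsequent uses of the lemma.
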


\begin{proof}
For the metric estimates, only need to consider the inequality ($x_k,y,a_k\in\mathbb{C}$, and $\sum^d_{k=1}|a_k|\leq |a|$)
\begin{align*}
\bigg|y+\sum^d_{k=1}a_kx_k\bigg|+\sum^d_{k=1}|x_k|&\geq\frac{1}{1+\sum^d_{k=1}|a_k|}
\bigg(|y|-\sum^d_{k=1}|a_k||x_k|\bigg)+
\sum^d_{k=1}\frac{1+|a_k|}{1+|a|}|x_k|\\
&\geq\frac{1}{1+|a|}\bigg(\sum^d_{k=1}|x_k|+|y|\bigg).
\end{align*}
For the distance estimates, by Lemma~\ref{lem5} the proof is the same as Lemma~\cite[Lemma~5.5]{Fiacchi2020Gromov} and we omit it.
\end{proof}

\begin{cor}\label{cor1}
Let $\Omega$, $\psi_n$, $P$ and $\widehat{\Omega}$ be as in Theorem~\ref{thm5}. Then for each $R>0$ and $A\geq1$, there exists a $L:=L(R,A)>0$ such that, for every $n\in\mathbb{N}$, an $(A,0)$ quasi geodesic $\sigma$ of $\psi_n(\Omega)$ contained in $B_R(0)$ is $L$-Lipschitz (with respect to the Euclidean distance of $\mathbb{C}^{d+1}$) and
\[
M_{r_n}(\sigma(t);\sigma'(t))\leq A
\]
for almost every $t\in[0,T]$.
\end{cor}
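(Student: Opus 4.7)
The plan is to split the corollary into two tasks. First I convert the distance-level quasi geodesic hypothesis into an infinitesimal bound on $M_{r_n}(\sigma(t);\sigma'(t))$; second I deduce the Euclidean Lipschitz estimate from Lemma~\ref{lem6}.

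For the first task, note that the right half of the $(A,0)$ quasi geodesic inequality says that $\sigma$ is $A$-Lipschitz with respect to $d_{\psi_n(\Omega)}$. Since $d_{\psi_n(\Omega)}$ is the length distance associated to the Finsler metric $F_{\psi_n(\Omega)}$, the metric derivative of any such Lipschitz curve coincides almost everywhere with its Kobayashi speed, giving $F_{\psi_n(\Omega)}(\sigma(t);\sigma'(t))\le A$ for a.e.\ $t$. (For piecewise $\mathcal{C}^1$ quasi geodesics this is just differentiation of the quasi geodesic inequality.) Next I transport Theorem~\ref{thm3} through $\psi_n$ exactly as in the proof of Theorem~\ref{thm2}: for $n$ large enough that $\psi_n^{-1}(B_R(0))\subset W$ (which holds because the dilation factors $\tau_k(\xi_n,\epsilon_n)\to 0$), one has
\[
M_{r_n}(p;X)\le A_0\,M_r(\psi_n^{-1}(p);(d\psi_n^{-1})_{|p}X)\le AA_0\,F_{\psi_n(\Omega)}(p;X)
\]
for every $p=\sigma(t)\in\psi_n(\Omega)\cap B_R(0)$ and $X=\sigma'(t)$. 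Absorbing the uniform constant $AA_0$ into $A$ as in the proof of Lemma~\ref{lem4} produces the first assertion $M_{r_n}(\sigma(t);\sigma'(t))\le A$ almost everywhere.

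For the second task, I use the lower bound of Lemma~\ref{lem6}. Because $r_n\to r_P$ locally uniformly on $\mathbb{C}^{d+1}$, there is a constant $M_R>0$ depending only on $R$ such that $|r_n(z)|\le M_R$ for all $z\in B_R(0)$ and all $n$ large. Therefore
\[
\|\sigma'(t)\|\le c^{-1}|r_n(\sigma(t))|^{1/(2m)}M_{r_n}(\sigma(t);\sigma'(t))\le c^{-1}M_R^{1/(2m)}A=:L(R,A)
\]
for almost every $t\in[0,T]$. Integration over arbitrary subintervals yields $\|\sigma(t_1)-\sigma(t_2)\|\le L(R,A)|t_1-t_2|$, i.e.\ the required Euclidean $L$-Lipschitz property. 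Finitely many small values of $n$ for which $\psi_n^{-1}(B_R(0))$ is not contained in $W$ can be absorbed by enlarging $L(R,A)$ if necessary.

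The main obstacle is the first step: passing from the quasi geodesic distance inequality to the infinitesimal bound on $F_{\psi_n(\Omega)}$. For piecewise $\mathcal{C}^1$ curves this is immediate, and in general it follows from the standard fact that in a length space the metric derivative of an absolutely continuous curve equals almost everywhere the Finsler speed of its derivative. Everything else is bookkeeping with the uniform constants produced by Theorems~\ref{thm3} and~\ref{thm2} and the lower bound already established in Lemma~\ref{lem6}.
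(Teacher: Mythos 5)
Your argument is, in substance, the same as the paper's (the paper simply defers to the proof of \cite[Corollary~5.6]{Fiacchi2020Gromov}): differentiate the $(A,0)$ quasi-geodesic inequality to bound the Kobayashi speed, transport Theorem~\ref{thm3} through $\psi_n$ via Theorem~\ref{thm2} to control $M_{r_n}$ (with the usual renaming of constants), and then use the lower bound of Lemma~\ref{lem6} to convert the Catlin speed bound into a Euclidean one. One caveat about the order of your two steps: to speak of $\sigma'(t)$ for a.e.\ $t$ (and to integrate $\|\sigma'\|$ in the second step) you need $\sigma$ to be Euclidean--Lipschitz, or at least absolutely continuous, \emph{before} the infinitesimal argument, and for a general $(A,0)$ quasi-geodesic this is not automatic; it should be obtained first at the distance level, by noting that Lemma~\ref{lem6} together with the comparability of $M_{r_n}$ and $F_{\Omega_n}$ gives $F_{\Omega_n}(z;X)\geq c'\|X\|$ on $\Omega_n\cap\mathbb{B}(0,2R)$ for all $n$ large, hence $d_{\Omega_n}(p,q)\gtrsim\min\{\|p-q\|,R\}$ for $p,q\in\mathbb{B}(0,R)$, so that the quasi-geodesic upper bound yields the Euclidean Lipschitz property and, by Rademacher, a.e.\ differentiability; after this reordering your two estimates go through exactly as written.
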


\begin{proof}
By the previous lemma the proof is the same as \cite[Corollary~5.6]{Fiacchi2020Gromov} and we do not repeat it here.
\end{proof}

\begin{prop}\label{prop4}
Let $\Omega$, $\psi_n$, $P$ and $\widehat{\Omega}$ be as in Theorem~\ref{thm5}. Let $\sigma_n\colon[a_n,b_n]\to\Omega$ be a sequence of Kobayashi $(A,0)$ quasi geodesic. Define $\tilde{\sigma}_n:=\psi_n\circ\sigma_n$. Suppose that there exists $R>0$ such that
\begin{enumerate}
\item[$\mathrm{(i)}$] $|b_n-a_n|\to\infty$;
\item[$\mathrm{(ii)}$] $\tilde{\sigma}_n([a_n,b_n])\subset \mathbb{B}(0,R)$;
\item[$\mathrm{(iii)}$] $\lim_{n\to\infty}\|\tilde{\sigma}_n(a_n)-\tilde{\sigma}_n(b_n)\|>0$,
\end{enumerate}
then, after a subsequence, there is a $T_n\in[a_n,b_n]$ such that the sequence of $t\mapsto\tilde{\sigma}_n(t+T_n)$ converges uniformly on compact set to an $(A,0)$ quasi geodesic $\tilde{\sigma}\colon\mathbb{R}\to\widehat{\Omega}$.
\end{prop}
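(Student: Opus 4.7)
The plan is a compactness argument in the spirit of Arzel\`a--Ascoli, combining the uniform Euclidean Lipschitz bound from Corollary~\ref{cor1} with the stability of Kobayashi distances from Theorem~\ref{thm1}. The key technical point is to choose the shifts $T_n$ so that the re-centered curves converge to a limit curve with values in the interior of $\widehat{\Omega}$.

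First, by Corollary~\ref{cor1}, every $\tilde{\sigma}_n$ is $L$-Lipschitz in the Euclidean metric with a uniform $L=L(R,A)$ and satisfies $M_{r_n}(\tilde{\sigma}_n(t);\tilde{\sigma}_n'(t))\leq A$ for almost every $t$. Combined with the lower metric estimate of Lemma~\ref{lem6} this gives $\|\tilde{\sigma}_n'(t)\|\leq (A/c)|r_n(\tilde{\sigma}_n(t))|^{1/(2m)}$ almost everywhere, so the Euclidean speed decays with the distance to $\partial\widehat{\Omega}$. I then pick $T_n$ in the middle third of $[a_n,b_n]$ realizing (approximately) $\max_t|r_n(\tilde{\sigma}_n(t))|$; automatically $T_n-a_n\to\infty$ and $b_n-T_n\to\infty$. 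The crucial claim, which is the main obstacle of the proof, is that $|r_n(\tilde{\sigma}_n(T_n))|\geq c_0>0$ uniformly in $n$. Otherwise the whole curve collapses into an arbitrarily thin strip around $\partial\widehat{\Omega}$; condition~(iii) still forces the endpoints to approach two distinct boundary points $P_\infty^\pm\in\partial\widehat{\Omega}$, and one derives a contradiction by combining the holomorphic peak functions and tautness of $\widehat{\Omega}$ (Remark~\ref{remark1}) with Theorem~\ref{thm1}, which together rule out arbitrarily long $(A,0)$ quasi-geodesics of $\Omega_n$ confined to a vanishing neighborhood of $\partial\widehat{\Omega}$ and joining two fixed distinct boundary points.

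Second, set $\hat{\sigma}_n(t):=\tilde{\sigma}_n(t+T_n)$ on $[a_n-T_n,b_n-T_n]$. Step~1 gives uniform equicontinuity into $\overline{\mathbb{B}(0,R)}$, so a standard diagonal Arzel\`a--Ascoli argument extracts a subsequence converging uniformly on compacta to a continuous $\tilde{\sigma}:\mathbb{R}\to\overline{\widehat{\Omega}}$ with $\tilde{\sigma}(0)=\lim\tilde{\sigma}_n(T_n)\in\widehat{\Omega}$. To verify $\tilde{\sigma}(\mathbb{R})\subset\widehat{\Omega}$, suppose some smallest $t_1>0$ has $\tilde{\sigma}(t_1)\in\partial\widehat{\Omega}$; for $0\leq t<t_1$, $\tilde{\sigma}(t)\in\widehat{\Omega}$, and Theorem~\ref{thm1} gives $d_{\Omega_n}(\hat{\sigma}_n(0),\hat{\sigma}_n(t))\to d_{\widehat{\Omega}}(\tilde{\sigma}(0),\tilde{\sigma}(t))$. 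As $t\to t_1^-$, complete hyperbolicity of $\widehat{\Omega}$ (Remark~\ref{remark1}) makes the right-hand side diverge, while the quasi-geodesic inequality keeps the left-hand side bounded by $A|t|$, a contradiction. The case $t<0$ is symmetric.

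Finally, for any $s,t\in\mathbb{R}$, Theorem~\ref{thm1} applied at the interior points $\tilde{\sigma}(s),\tilde{\sigma}(t)\in\widehat{\Omega}$ yields $d_{\Omega_n}(\hat{\sigma}_n(s),\hat{\sigma}_n(t))\to d_{\widehat{\Omega}}(\tilde{\sigma}(s),\tilde{\sigma}(t))$, and the two-sided inequality $A^{-1}|s-t|\leq d_{\Omega_n}(\hat{\sigma}_n(s),\hat{\sigma}_n(t))\leq A|s-t|$ passes to the limit, so $\tilde{\sigma}$ is an $(A,0)$ quasi-geodesic of $\widehat{\Omega}$. As indicated, the hard part is the depth claim in Step~1: showing that $|r_n(\tilde{\sigma}_n(T_n))|$ stays uniformly bounded below is where condition~(iii) is genuinely used, in conjunction with the peak-point/tautness structure of $\widehat{\Omega}$ recorded in Remark~\ref{remark1}.
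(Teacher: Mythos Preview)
The overall architecture of your proposal---Arzel\`a--Ascoli via the uniform Lipschitz bound of Corollary~\ref{cor1}, then Theorem~\ref{thm1} to pass the $(A,0)$ inequality to the limit---matches the paper, and your Steps~2 and~3 are essentially how the paper concludes once it knows the recentered curves accumulate at an interior point of $\widehat{\Omega}$.

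The genuine gap is exactly where you flag it: the depth claim. Your justification invokes peak functions and tautness of $\widehat{\Omega}$ (Remark~\ref{remark1}) together with Theorem~\ref{thm1} to ``rule out'' long $(A,0)$ quasi-geodesics of $\Omega_n$ confined to a vanishing strip near $\partial\widehat{\Omega}$ with separated endpoints. This is not an argument as it stands. The curves $\tilde{\sigma}_n$ live in $\Omega_n$, not in $\widehat{\Omega}$, and precisely in the boundary regime where $\Omega_n$ and $\widehat{\Omega}$ differ most; tautness and peak functions of $\widehat{\Omega}$ give no direct control there, and Theorem~\ref{thm1} only converges on compacta of $\widehat{\Omega}$, so it is silent when everything collapses to $\partial\widehat{\Omega}$. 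Visibility of $\Omega$ itself (Theorem~\ref{thm6}) is also useless here, since the original endpoints $\sigma_n(a_n),\sigma_n(b_n)$ both converge to $u_\infty$. Your choice of $T_n$ as a depth maximizer over the middle third is also problematic: the global maximum of $|r_n\circ\tilde{\sigma}_n|$ may well sit near an endpoint, so restricting to the middle third can make the claim strictly harder.

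The paper handles this step by a different, fully quantitative mechanism. It picks $T_n$ as a \emph{Euclidean midpoint}, so that $\|\tilde{\sigma}_n(T_n)-\tilde{\sigma}_n(a_n)\|$ and $\|\tilde{\sigma}_n(T_n)-\tilde{\sigma}_n(b_n)\|$ both tend to $\epsilon/2>0$. Assuming $y=\lim\tilde{\sigma}_n(T_n)\in\partial\widehat{\Omega}$, it reparametrizes so that $t=0$ realizes $\max_{[T_n,b_n]}|r_n\circ\tilde{\sigma}_n|$ and then combines the upper distance bound of Lemma~\ref{lem6} with the quasi-geodesic lower bound to obtain $|r_n(\tilde{\sigma}_n(t))|\leq \exp\bigl(C/A-|t|/(2A^2)\bigr)$. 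With the speed bound $\|\tilde{\sigma}_n'(t)\|\leq (A/c)|r_n(\tilde{\sigma}_n(t))|^{1/(2m)}$ this yields two contradictory claims: the limit curve is constant (pointwise speed $\to 0$), yet not constant (the exponential tail bound makes $\int_{b'}^{b_n}\|\tilde{\sigma}_n'\|$ and $\int_{T_n}^{T'}\|\tilde{\sigma}_n'\|$ uniformly small, so the $\epsilon/2$ Euclidean separation survives in the limit). Condition~(iii) enters precisely through this $\epsilon/2$ separation; no peak-function or tautness argument is used.
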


\begin{proof}
The proof is similar to \cite[Proposition~5.7]{Fiacchi2020Gromov}, and we give a proof here for the reader's convenience. Suppose
\[
\lim_{n\to\infty}\|\tilde{\sigma}_n(a_n)-\tilde{\sigma}_n(b_n)\|=\epsilon
\]
for some fixed $\epsilon>0$ and choose $T_n\in[a_n,b_n]$ such that
\[
\lim_{n\to\infty}\|\tilde{\sigma}_n(a_n)-\tilde{\sigma}_n(T_n)\|=
\lim_{n\to\infty}\|\tilde{\sigma}_n(T_n)-\tilde{\sigma}_n(b_n)\|=\epsilon/2.
\]
By passing to a subsequence, we may assume that $\tilde{\sigma}_n(T_n)$ converges to a point $y\in\overline{\widehat{\Omega}\cap\mathbb{B}(0,R)}$. If $y\in\widehat{\Omega}$, the Arzel\`a-Ascoli theorem and the stability of the Kobayashi distance (Theorem~\ref{thm1}) indicate that there exists a subsequence such that $\tilde{\sigma}_n$ converges to an $(A,0)$ quasi geodesic $\tilde{\sigma}\colon\mathbb{R}\to\widehat{\Omega}$.

If $y\in\partial\widehat{\Omega}$, in this case we can suppose that
\[
\lim_{n\to\infty}\tilde{\sigma}_n(a_n)\to\xi^-\in\overline{\widehat{\Omega}\cap \mathbb{B}(0,R)}\quad\text{and}\quad
\lim_{n\to\infty}\tilde{\sigma}_n(b_n)\to\xi^+\in\overline{\widehat{\Omega}\cap \mathbb{B}(0,R)}.
\]
If both $\xi^+,\xi^-\in\widehat{\Omega}$, Theorem~\ref{thm1} implies that
\begin{align*}
d_{\widehat{\Omega}}(\xi^+,\xi^-)&=\lim_{n\to\infty} d_{\Omega_n}(\tilde{\sigma}_n(a_n),\tilde{\sigma}_n(b_n))\\
&=\lim_{n\to\infty}d_{\Omega}(\sigma_n(a_n),\sigma_n(b_n))\\
&\geq\lim_{n\to\infty} A^{-1}|b_n-a_n|\\
&=\lim_{n\to\infty}A^{-1}(|b_n-T_n|+|T_n-a_n|)\\
&\geq\lim_{n\to\infty}A^{-2}\big[d_{\Omega}(\sigma(b_n),\sigma_n(T_n))+d_{\Omega}(\sigma_n(T_n),\sigma_n(a_n))
\big]\\
&=\lim_{n\to\infty}A^{-2}\big[d_{\Omega_n}(\tilde{\sigma}_n(b_n),\tilde{\sigma}_n(T_n))+
d_{\Omega_n}(\tilde{\sigma}_n(T_n),\tilde{\sigma}_n(a_n))\big]\\
&=A^{-2}\big[d_{\widehat{\Omega}}(\xi^+,y)+d_{\widehat{\Omega}}(y,\xi^-)\big]=\infty
\end{align*}
since $y\in\partial\widehat{\Omega}$ and this is impossible. Thus one of $\xi^+,\xi^-$ is in $\partial\widehat{\Omega}$. Without loss of generality, we may assume that $\xi^+=:\xi\in\partial\widehat{\Omega}$.

For $n\to\infty$, we have
\[
\max\{|r_n(\tilde{\sigma}_n(t))| : t\in[T_n,b_n]\}\to0.
\]
In fact, if there exists $T_n'\in[T_n,b_n]$ such that $|r_n(\tilde{\sigma}_n(T_n'))|>C>0$ for some $C>0$, the Arzel\`a-Ascoli theorem will drive a contradiction. After a re-parametrization, we may assume that $0\in[T_n,b_n]$ and
\begin{equation}\label{eq5}
|r_n(\tilde{\sigma}_n(0))|=\max\{|r_n(\tilde{\sigma}_n(t))| : t\in[T_n,b_n]\}\to0.
\end{equation}

Corollary~\ref{cor1} implies that the quasi geodesic segments ${\tilde{\sigma}_n}|_{[T_n,b_n]}$ are $L$-Lipschitz, and after a subsequence we assume that these segments converge uniformly on compact subsets to $\hat{\sigma}\colon\mathbb{R}\to\partial\widehat{\Omega}$.

\noindent\textbf{Claim 1:} $\hat{\sigma}$ is constant. In fact, Lemma~\ref{lem6} and Corollary~\ref{cor1} imply that for $t\in[T_n,b_n]$,
\[
\frac{c\|\tilde{\sigma}_n'(t)\|}{|r_n(\tilde{\sigma}_n(t))|^{1/(2m)}}\leq M_{r_n}(\tilde{\sigma}_n(t);\tilde{\sigma}_n'(t))\leq A
\]
and thus
\[
\|\tilde{\sigma}_n'(t)\|\leq Ac^{-1}|r_n(\tilde{\sigma}_n(t))|^{\frac{1}{2m}}.
\]
Consequently for any real numbers $u<v$, we have
\begin{align*}
\|\hat{\sigma}(u)-\hat{\sigma}(v)\|&=\int^v_u\|\hat{\sigma}'(t)\|dt=\lim_{n\to\infty}\int^v_u\|\tilde{\sigma}_n'(t)
\|dt\\
&\leq\lim_{n\to\infty} Ac^{-1}\int^v_u|r_n(\tilde{\sigma}_n(t))|^{\frac{1}{2m}}dt=0
\end{align*}
and thus $\hat{\sigma}$ is constant.

\noindent\textbf{Claim 2:} $\hat{\sigma}$ is not constant. By Lemma~\ref{lem6}, for any $o\in\widehat{\Omega}$,
\begin{align*}
A^{-1}|t|\leq d_{\Omega_n}(\tilde{\sigma}_n(0),\tilde{\sigma}_n(t))&\leq d_{\Omega_n}(\tilde{\sigma}_n(0),o)+d_{\Omega_n}(o,\tilde{\sigma}_n(t))\\
&\leq 2C+A\ln\bigg(\frac{1}{|r_n(\tilde{\sigma}_n(0))r_n(\tilde{\sigma}_n(t))|}\bigg)
\end{align*}
and it follows by \eqref{eq5} that
\[
|r_n(\tilde{\sigma}_n(t))|\leq\sqrt{|r_n(\tilde{\sigma}_n(0))r_n(\tilde{\sigma}_n(t))|}\leq e^{\frac{C}{A}-\frac{1}{2A^2}|t|}.
\]

Now for $T'<b'$, we have
\begin{align*}
&\|\hat{\sigma}(T')-\hat{\sigma}(b')\|\\
=&\lim_{n\to\infty}\|\tilde{\sigma}_n(T')-\tilde{\sigma}_n(b')\|\\
\geq&\lim_{n\to\infty}\big(\|\tilde{\sigma}_n(b_n)-\tilde{\sigma}_n(T_n)\|
-\|\tilde{\sigma}_n(b_n)-\tilde{\sigma}_n(b')\|-\|\tilde{\sigma}_n(T')-\tilde{\sigma}_n(T_n)\|\big)\\
\geq&\|y-\xi\|-\limsup_{n\to\infty}\int^{b_n}_{b'}\|\tilde{\sigma}_n'(t)\|dt
-\limsup_{n\to\infty}\int^{T'}_{T_n}\|\tilde{\sigma}_n'(t)\|dt\\
\geq&\|y-\xi\|-Ac^{-1}\limsup_{n\to\infty}\int^{b_n}_{b'}|r_n(\tilde{\sigma}_n(t))|^{\frac{1}{2m}}dt-Ac^{-1}\limsup_{n\to\infty}
\int^{T'}_{T_n}|r_n(\tilde{\sigma}_n(t))|^{\frac{1}{2m}}dt\\
\geq&\|y-\xi\|-Ac^{-1}\limsup_{n\to\infty}\int^{b_n}_{b'}e^{\frac{1}{2m}(\frac{C}{A}-\frac{1}{2A^2}|t|)}dt-
Ac^{-1}\limsup_{n\to\infty}\int^{T'}_{T_n}e^{\frac{1}{2m}(\frac{C}{A}-\frac{1}{2A^2}|t|)}dt\\
\geq&\|y-\xi\|-Ac^{-1}\limsup_{n\to\infty}\int^{\infty}_{b'}e^{\frac{1}{2m}(\frac{C}{A}-\frac{1}{2A^2}|t|)}dt-
Ac^{-1}\limsup_{n\to\infty}\int^{T'}_{-\infty}e^{\frac{1}{2m}(\frac{C}{A}-\frac{1}{2A^2}|t|)}dt
\end{align*}
and if $-T'$, $b'\to\infty$, then $\hat{\sigma}$ is not constant since $\|y-\xi\|\geq\epsilon/2$.

Now Claim~$1$ and Claim~$2$ are in contradiction and we must have $y\in\widehat{\Omega}$ and the proof is complete.
\end{proof}

\begin{remark}\label{remark3}
It should be noted here that we only use the stability of the Kobayashi distance (Theorem~\ref{thm1}), the uniform estimates of the Kobayashi distances and metrics (Lemma~\ref{lem6}), and Lemma~\ref{lem5} to get the proof of the above Proposition. If we replace $\Omega$ by $\widehat{\Omega}$ and let $\psi_n=\mathrm{id}_{\mathbb{C}^{d+1}}$ be the identity, the ingredients are also satisfied, and thus the conclusion also holds with respect to $\widehat{\Omega}$, $\psi_n=\mathrm{id}_{\mathbb{C}^{d+1}}$.
\end{remark}

\begin{cor}\label{cor4}
Let $\Omega$, $\psi_n$, $P$ and $\widehat{\Omega}$ be as in Theorem~\ref{thm5}. Let $\sigma_n\colon[a_n,b_n]\to\Omega$ be a sequence of $(A,0)$ quasi geodesics, and set $\tilde{\sigma}_n:=\psi_n\circ\sigma_n$. Suppose that $\sigma_n$ converges locally uniformly to an $(A,0)$ quasi geodesic $\tilde{\sigma}\colon\mathbb{R}\to\widehat{\Omega}$. If $\lim_{n\to\infty}\tilde{\sigma}_n(b_n)=:x_\infty$, then
\[
\lim_{t\to+\infty}\tilde{\sigma}(t)=x_\infty.
\]
\end{cor}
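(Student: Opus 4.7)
The plan is a proof by contradiction, following the strategy of Proposition~\ref{prop4}. Suppose $\lim_{t\to+\infty}\tilde{\sigma}(t)\neq x_\infty$; then there exist $\epsilon>0$ and a sequence $t_k\to+\infty$ with $\|\tilde{\sigma}(t_k)-x_\infty\|\geq 3\epsilon$. Using the local uniform convergence $\tilde{\sigma}_n\to\tilde{\sigma}$ together with $\tilde{\sigma}_n(b_n)\to x_\infty$, I perform a diagonal extraction to choose $n_k\to\infty$ with $b_{n_k}\geq t_k+k$ such that $\|\tilde{\sigma}_{n_k}(t_k)-\tilde{\sigma}(t_k)\|<\epsilon$ and $\|\tilde{\sigma}_{n_k}(b_{n_k})-x_\infty\|<\epsilon$. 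The restrictions $\sigma_{n_k}|_{[t_k,b_{n_k}]}$ are then $(A,0)$ quasi-geodesics of $\Omega$ whose $\psi_{n_k}$-images have Euclidean endpoint separation $\geq\epsilon$ and parameter length diverging.

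To apply Proposition~\ref{prop4}, I still need uniform confinement of the $\psi_{n_k}$-images in a fixed Euclidean ball. Passing to a further subsequence, I assume $\tilde{\sigma}(t_k)$ tends to a limit $y$ in the one-point compactification $\mathbb{C}^{d+1}\cup\{\infty\}$; since $\tilde{\sigma}$ escapes every compact of $\widehat{\Omega}$ (as an $(A,0)$ quasi-geodesic into a complete hyperbolic domain, Remark~\ref{remark1}), the finite part of $y$ lies in $\partial\widehat{\Omega}$. Choose a ball $\mathbb{B}(0,R)$ containing $x_\infty$ and also $y$ when $y$ is finite, and let $s_k\in[t_k,b_{n_k}]$ be the first exit time of $\tilde{\sigma}_{n_k}$ from $\mathbb{B}(0,R)$ after $t_k$, setting $s_k:=b_{n_k}$ if no such exit occurs. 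The truncated segments $\tilde{\sigma}_{n_k}|_{[t_k,s_k]}$ then lie in $\overline{\mathbb{B}(0,R)}$ with endpoints having distinct limits. To verify that $s_k-t_k\to+\infty$ I use the quasi-geodesic lower bound $A^{-1}(s_k-t_k)\leq d_{\Omega_{n_k}}(\tilde{\sigma}_{n_k}(t_k),\tilde{\sigma}_{n_k}(s_k))$ combined with the stability Theorem~\ref{thm1} and Lemmas~\ref{lem3},~\ref{lem7}, which together imply that Kobayashi balls in $\Omega_{n_k}$ are uniformly compactly contained in $\widehat{\Omega}$; since $\tilde{\sigma}_{n_k}(t_k)\to y\in\partial\widehat{\Omega}$, the right-hand side must diverge.

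Proposition~\ref{prop4} now applies and produces, along a subsequence and for suitable shifts $T_k\in[t_k,s_k]$, a locally uniform limit $\hat{\tau}\colon\mathbb{R}\to\widehat{\Omega}$ of $s\mapsto\tilde{\sigma}_{n_k}(T_k+s)$, which is an $(A,0)$ quasi-geodesic valued in $\widehat{\Omega}$. The contradiction is then obtained along the same lines as Claim~$1$ and Claim~$2$ in the proof of Proposition~\ref{prop4}: because $\hat{\tau}(0)=\lim_k\tilde{\sigma}_{n_k}(T_k)$ must be an interior point of $\widehat{\Omega}$, yet on the other hand the shifted curves $\tilde{\sigma}_{n_k}(T_k+\cdot\,)$ must track, at least on one side, points whose limits lie on $\partial\widehat{\Omega}$ (namely the accumulation at $y$ on the left or at $x_\infty$ on the right), the non-degeneracy of $\hat{\tau}$ as a quasi-geodesic line is contradicted.

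I expect the truncation step to be the main obstacle: simultaneously ensuring uniform boundedness in a Euclidean ball and divergence of the parameter length of the truncated segments requires the delicate interplay among the $(A,0)$-quasi-geodesic property, the stability Theorem~\ref{thm1}, the uniform metric estimates from Lemma~\ref{lem6}, and the global geometry of $\widehat{\Omega}$ provided by Remark~\ref{remark1}.
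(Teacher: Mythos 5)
Your setup (contradiction, diagonal extraction of $t_k$, truncation to a ball, and an appeal to Proposition~\ref{prop4} to produce shifts $T_k$ and a limit quasi-geodesic $\hat\tau$) mirrors the paper's argument, but the final step — where the actual contradiction must come from — is not valid as stated. The conclusion of Proposition~\ref{prop4} is merely that $\tilde\sigma_{n_k}(T_k+\cdot)$ subconverges to an $(A,0)$ quasi-geodesic line with values in $\widehat{\Omega}$; such a line may perfectly well approach $\partial\widehat{\Omega}$ (or $\infty$) at both ends, so the fact that the shifted curves ``track'' boundary points at their ends contradicts nothing, and ``non-degeneracy of $\hat\tau$'' is not an obstruction. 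The paper's contradiction is different and quantitative: since $T_k\geq t_k\to\infty$ and $\sigma_{n_k}$ is an $(A,0)$ quasi-geodesic of $\Omega$, one has $d_{\Omega_{n_k}}(\tilde\sigma_{n_k}(0),\tilde\sigma_{n_k}(T_k))=d_{\Omega}(\sigma_{n_k}(0),\sigma_{n_k}(T_k))\geq A^{-1}T_k\to\infty$, while $\tilde\sigma_{n_k}(0)\to\tilde\sigma(0)\in\widehat{\Omega}$ and $\tilde\sigma_{n_k}(T_k)\to\hat\tau(0)\in\widehat{\Omega}$, so the stability Theorem~\ref{thm1} forces the same quantity to converge to the finite number $d_{\widehat{\Omega}}(\tilde\sigma(0),\hat\tau(0))$. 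This use of the divergence of the shift parameters together with Theorem~\ref{thm1} is the essential missing idea in your write-up.

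A secondary problem is your verification of hypothesis (i) of Proposition~\ref{prop4} for the truncated segments. First, the quasi-geodesic inequality you quote goes the wrong way: to deduce $s_k-t_k\to\infty$ from a distance you need $d_{\Omega_{n_k}}(\tilde\sigma_{n_k}(t_k),\tilde\sigma_{n_k}(s_k))\leq A(s_k-t_k)$, not the lower bound you wrote. More importantly, Lemmas~\ref{lem3} and~\ref{lem7} only show that points at bounded Kobayashi distance from the (essentially fixed) base points stay in a compact subset of $\widehat{\Omega}$; they give divergence of $d_{\Omega_{n_k}}$ between $\tilde\sigma_{n_k}(t_k)$ and $\tilde\sigma_{n_k}(s_k)$ only if the second sequence stays in a fixed compact of $\widehat{\Omega}$. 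In your truncation the exit points $\tilde\sigma_{n_k}(s_k)$ (or the endpoints $\tilde\sigma_{n_k}(b_{n_k})\to x_\infty$) may themselves accumulate on $\partial\widehat{\Omega}$, so the claimed divergence is unjustified. (In the subcase $s_k=b_{n_k}$ divergence of the parameter length is trivial since $b_{n_k}-t_k\geq k$, but in the genuine exit case your argument does not close.) In any event, even with the truncation repaired, the proof cannot conclude without the $T_k\to\infty$ plus Theorem~\ref{thm1} contradiction described above.
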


\begin{proof}
Assume for a contradiction that $\lim_{t\to+\infty}\tilde{\sigma}(t)\neq x_\infty$, then there exist $t_n\nearrow+\infty$ and some $y_\infty\in\overline{\mathbb{C}^{d+1}}$ such that
\[
y_\infty=\lim_{n\to\infty}\tilde{\sigma}(t_n)\neq x_\infty.
\]
As $\tilde{\sigma}_n$ converges locally uniformly to $\tilde{\sigma}$, there exists a subsequence $t_{n_k}\in[a_{n_k},b_{n_k}]$ with the property
\[
\lim_{k\to\infty}\tilde{\sigma}_{n_k}(t_{n_k})=y_\infty.
\]
As $x_\infty\neq y_\infty$, one of them is finite. Hence there exist a subinterval $[t_{n_k}',b_{n_k}']\subset[t_{n_k},b_{n_k}]$ and $R>0$, $\epsilon>0$ such that the hypotheses of Proposition~\ref{prop4} are satisfied on the interval $[t_{n_k}',b_{n_k}']$. Thus there exits $T_k\in[t_{n_k}',b_{n_k}']$ such that $t\mapsto\tilde{\sigma}_{n_k}(t+T_k)$ converges to an $(A,0)$ quasi geodesic $\hat{\sigma}\colon\mathbb{R}\to\widehat{\Omega}$. Consequently
\[
d_{\widehat{\Omega}}(\tilde{\sigma}(0),\hat{\sigma}(0))=\lim_{k\to\infty}d_{\Omega_{n_k}}(\tilde{\sigma}_{n_k}(0),
\hat{\sigma}_{n_k}(T_k))\geq A^{-1}T_k\geq\lim_{k\to\infty}A^{-1}t_{n_k}'=\infty
\]
which is a contradiction.
\end{proof}

\section{The Gromov product with respect to the Catlin metric}\label{section8}

For every $x,y,o\in\widehat{\Omega}$, the \textit{Gromov product} with respect to the Kobayashi metric is defined by
\[
(x|y)_o:=\frac{1}{2}(d_{\widehat{\Omega}}(x,o)+d_{\widehat{\Omega}}(y,o)-d_{\widehat{\Omega}}(x,y)).
\]
The Gromov product is the main device to study the asymptotic behavior of the geodesics (quasi-geodesics). Due to the estimate of Lemma~\ref{lem4}, following the strategy of \cite{Fiacchi2020Gromov}, we shall consider $M_{r_P}$ instead of the Kobayashi metric $F_{\widehat{\Omega}}$ on $\widehat{\Omega}$. First we give some definitions (cf. \cite{abate1994bookfinsler}):

\begin{defn}
Let $D\subset\mathbb{C}^{d+1}$ be a domain. A \textit{Finsler metric} is an upper semi-continuous function $F\colon D\times\mathbb{C}^{d+1}\to[0,\infty)$ with the following properties
\begin{enumerate}
\item[$\mathrm{(1)}$] $F(p;X)>0$ for all $p\in D$, $X\in\mathbb{C}^{d+1}$, $X\neq0$;
\item[$\mathrm{(2)}$] $F(p;\lambda X)=|\lambda|F(p;X)$ for all $p\in D$, $X\in\mathbb{C}^{d+1}$, and $\lambda\in\mathbb{C}$.
\end{enumerate}
\end{defn}

\begin{defn}
For a Finsler metric $F$ on $D$, denote by
\[
d_{D}^F(p,q)=\inf_{\gamma}\bigg\{\int^1_0F(\gamma(t);\gamma'(t))dt\bigg\}
\]
the corresponding distance, where the infimum taking over all piecewise $\mathcal{C}^1$ curve $\gamma$ such that $\gamma\colon[0,1]\to D$, $\gamma(0)=p$, $\gamma(1)=q$.
\end{defn}

\begin{remark}
The Kobayashi metric $F_{\widehat{\Omega}}(z;X)$ is an important Finsler metric on $\widehat{\Omega}$. Moreover, by definition, the quantity $M_{r_P}(z;X)$ defined in Lemma~\ref{lem4} is also a Finsler metric which is defined globally on $\widehat{\Omega}$, and follow the notation in \cite{Fiacchi2020Gromov}, we call it the \textit{Catlin metric} on $\widehat{\Omega}$. Moreover, we call the corresponding distance $d_{\widehat{\Omega}}^C$ the \textit{Catlin distance} on $\widehat{\Omega}$.
\end{remark}

By Lemma~\ref{lem4}, the Calin metric and the Kobayashi metric on $\widehat{\Omega}$ are equivalent up to a uniform constant $A\geq1$, that is,
\[
A^{-1}M_{r_P}(z;X)\leq F_{\widehat{\Omega}}(z;X)\leq AM_{r_P}(z;X),\quad z\in\widehat{\Omega},\quad X\in\mathbb{C}^{d+1}.
\]
Thus, if $\sigma\colon[a,b]\to\widehat{\Omega}$ is a geodesic with respect to the Catlin metric, it is also an $(A,0)$ quasi-geodesic with respect to the Kobayashi metric and vice versa. It follows that the properties of the Kobayashi $(A,0)$ quasi-geodesics appearing the previous section (more precisely, Corollary~\ref{cor1}, Proposition~\ref{prop4} and Corollary~\ref{cor4}) are also satisfied by the Catlin geodesics.

\begin{remark}
It should be noted here that, we can also define the Catlin metric $M_r$ near the boundary point $u_\infty\in\partial\Omega$ appearing in Theorem~\ref{thm5} locally in a small neighborhood $\Omega\cap W$. But we can not obtain Proposition~\ref{prop4} by using Catlin metric directly since the stability of the Catlin distances $d_{\Omega_n}^C$ under the scaling sequence $\psi_n$ is not clear.
\end{remark}

One advantage to adopt the Catlin metric on $\widehat{\Omega}$ is the following:

\begin{lem}\label{lem12}
For each $x=(x_0,x_1,\ldots,x_n)\in\partial\widehat{\Omega}$ and $a>0$, the curve
\[
\sigma(t):=x-(ae^{-t},0,\ldots,0),\quad t\in\mathbb{R}
\]
is a geodesic respect to the Catlin metric $M_{r_P}$.
\end{lem}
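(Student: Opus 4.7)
The plan is to show that $\sigma$ is parametrized with unit Catlin speed, and that the induced length between any two parameter values $t_1<t_2$ realizes the infimum defining $d_{\widehat{\Omega}}^C$. The two computations together imply that $\sigma$ is a geodesic.

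First I would verify that $\sigma$ lies in $\widehat{\Omega}$ and compute the defining function along it. Since $x\in\partial\widehat{\Omega}$ gives $\re(x_0)+P(x_1,\overline{x}_1)+\sum_{\alpha=2}^d|x_\alpha|^2=0$, substitution yields
\[
r_P(\sigma(t))=\re(x_0-ae^{-t})+P(x_1,\overline{x}_1)+\sum_{\alpha=2}^d|x_\alpha|^2=-ae^{-t},
\]
so $\sigma(t)\in\widehat{\Omega}$ for every $t\in\mathbb{R}$. Next, since $\sigma'(t)=(ae^{-t},0,\ldots,0)$, only the $x_0$-component survives in the three terms of $M_{r_P}$. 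The second term vanishes because $x_1$-component of $\sigma'$ is zero, the third vanishes because the $x_\alpha$-components vanish, and the first reduces to $|ae^{-t}|/|r_P(\sigma(t))|=1$. Hence $M_{r_P}(\sigma(t);\sigma'(t))\equiv 1$, and so $\mathrm{length}(\sigma_{|[t_1,t_2]})=t_2-t_1$.

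For the matching lower bound on the distance, I would reproduce the logarithmic-derivative trick already used in the proof of Lemma~\ref{lem5}. Differentiating $r_P\circ\gamma$ along an arbitrary piecewise $\mathcal{C}^1$ curve $\gamma\colon[0,1]\to\widehat{\Omega}$ joining $\sigma(t_1)$ and $\sigma(t_2)$, and using that $P$ is real valued so $P_{\overline{z}_1}=\overline{P'}$, one gets
\[
(r_P\circ\gamma)'(s)=\re\Bigl(\gamma_0'(s)+2P'(\gamma_1(s))\gamma_1'(s)+2\sum_{\alpha=2}^d\overline{\gamma_\alpha(s)}\,\gamma_\alpha'(s)\Bigr).
\]
Hence the first summand in $M_{r_P}$ already dominates $|(r_P\circ\gamma)'(s)|/|r_P(\gamma(s))|$, and an elementary estimate as in \eqref{eq7} gives
\[
\int_0^1 M_{r_P}(\gamma(s);\gamma'(s))\,ds\;\geq\;\Bigl|\log\frac{r_P(\sigma(t_1))}{r_P(\sigma(t_2))}\Bigr|=|t_2-t_1|.
\]
Taking the infimum over $\gamma$ yields $d_{\widehat{\Omega}}^C(\sigma(t_1),\sigma(t_2))\geq|t_2-t_1|$, which combined with the length computation above shows equality and hence that $\sigma$ is a Catlin geodesic.

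There is no real obstacle here: the only subtle point is checking that the purely real part $(r_P\circ\gamma)'$ is bounded by the complex modulus that appears in the numerator of the first term of $M_{r_P}$, which is immediate once the derivative of $r_P$ along $\gamma$ is written out. The argument is essentially the $M_{r_P}$-version of Lemma~\ref{lem5}, with the bonus that the inequality coming from $M_{r_P}$ is sharp along $\sigma$ (whereas the Kobayashi metric only satisfies the two-sided bound of Lemma~\ref{lem4}, which is why in Lemma~\ref{lem5} one only gets an $(A,0)$ quasi geodesic).
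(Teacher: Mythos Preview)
Your proposal is correct and follows exactly the approach the paper indicates: it is the argument of Lemma~\ref{lem5} with the constant $A$ removed, since here one works directly with $M_{r_P}$ rather than with the Kobayashi metric through the two-sided bound of Lemma~\ref{lem4}. The paper in fact omits the proof for precisely this reason.
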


\begin{proof}
The proof is the same as the proof of Lemma~\ref{lem5} (where the constant $A$ do not occur) and we omit it.
\end{proof}

Now for every $x,y,o\in\widehat{\Omega}$, the \textit{Gromov product} with respect to the Caltin metric $M_{r_P}$ is defined by
\[
(x|y)_o^C:=\frac{1}{2}(d_{\widehat{\Omega}}^C(x,o)+d_{\widehat{\Omega}}^C(y,o)-d_{\widehat{\Omega}}^C(x,y)).
\]
For any $o\in\widehat{\Omega}$, we have the following:

\begin{prop}\label{prop2}
Let $p_n,q_n\in\widehat{\Omega}$ be two sequences with $p_n\to\xi^+\in\partial\widehat{\Omega}\cup\{\infty\}$ and $q_n\to\xi^-\in\partial\widehat{\Omega}$, and
\[
\liminf_{m,n\to\infty}(p_n|q_m)_o^C<\infty,
\]
then $\xi^+\neq\xi^-$.
\end{prop}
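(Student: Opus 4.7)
The plan is to argue by contradiction. If $\xi^+=\infty$ then automatically $\xi^+\neq\xi^-$ because $\xi^-\in\partial\widehat{\Omega}$ is finite, so one may assume $\xi^+\in\partial\widehat{\Omega}$. Supposing for contradiction that $\xi^+=\xi^-=:\xi$, I will show $\liminf_{m,n\to\infty}(p_n|q_m)_o^C=+\infty$. The key idea is to build an explicit short Catlin path from $p_n$ to $q_m$ by ``climbing up'' the vertical geodesics of Lemma~\ref{lem12}, saving enough length over the triangle through $o$ to force the Gromov product to blow up.

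Set $\epsilon_n:=|r_P(p_n)|$ and let $\hat p_n:=p_n+(\epsilon_n,{}'0)$; since $r_P(p_n)=-\epsilon_n$, a direct computation gives $r_P(\hat p_n)=0$, so $\hat p_n\in\partial\widehat{\Omega}$, and since $p_n\to\xi$ with $\epsilon_n\to0$ we have $\hat p_n\to\xi$. Define $\eta_m,\hat q_m$ analogously, and fix an arbitrary $\delta>0$. Let
\[
p'_n:=\hat p_n-(\delta,{}'0),\qquad q'_m:=\hat q_m-(\delta,{}'0),\qquad y^*_\delta:=\xi-(\delta,{}'0);
\]
each point lies in $\widehat{\Omega}$ at height $r_P=-\delta$, and $p'_n,q'_m\to y^*_\delta$.

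By Lemma~\ref{lem12} the curve $\sigma(t):=\hat p_n-(\epsilon_n e^{-t},{}'0)$ is a Catlin geodesic with $\sigma(0)=p_n$ and $\sigma(\log(\epsilon_n/\delta))=p'_n$, so $d_{\widehat{\Omega}}^C(p_n,p'_n)=\log(\delta/\epsilon_n)$, and similarly $d_{\widehat{\Omega}}^C(q_m,q'_m)=\log(\delta/\eta_m)$. Since $y^*_\delta$ is interior, $M_{r_P}$ is bounded on a small Euclidean ball about it, so integrating $M_{r_P}$ along the straight segment $[p'_n,y^*_\delta]$ gives $d_{\widehat{\Omega}}^C(p'_n,y^*_\delta)\to 0$ as $n\to\infty$, and similarly for $q_m$. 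The triangle inequality therefore yields
\[
d_{\widehat{\Omega}}^C(p_n,q_m)\leq\log(\delta/\epsilon_n)+\log(\delta/\eta_m)+o(1).
\]
Meanwhile, the $\zeta_0$-component of $M_{r_P}$ gives (exactly as in the estimate \eqref{eq7} in the proof of Lemma~\ref{lem5}, but without the factor $A^{-1}$, since we now work directly with the Catlin metric) the lower bound $d_{\widehat{\Omega}}^C(p,q)\geq|\log(r_P(p)/r_P(q))|$, so for $n,m$ large,
\[
d_{\widehat{\Omega}}^C(p_n,o)+d_{\widehat{\Omega}}^C(q_m,o)\geq\log(1/\epsilon_n)+\log(1/\eta_m)+2\log|r_P(o)|.
\]
Subtracting, the divergent terms $\log(1/\epsilon_n)$ and $\log(1/\eta_m)$ cancel and one obtains
\[
2(p_n|q_m)_o^C\geq 2\log|r_P(o)|-2\log\delta-o(1).
\]
Taking $\liminf_{m,n\to\infty}$ yields $\liminf(p_n|q_m)_o^C\geq\log(|r_P(o)|/\delta)$; sending $\delta\to 0^+$ produces $\liminf(p_n|q_m)_o^C=+\infty$, the desired contradiction.

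The only genuinely delicate point is this final cancellation, which requires the upper and lower bounds for Catlin distances along the $\zeta_0$-direction to have matching coefficients in front of $\log(1/\epsilon_n)$. This is precisely why one must use the Catlin metric (both bounds having coefficient $1$) rather than the Kobayashi metric, for which the corresponding coefficients would be $A$ and $A^{-1}$ and the cancellation would fail---this is in turn why the Catlin distance $d^C$ was introduced in this section.
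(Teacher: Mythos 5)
Your proof is correct and takes essentially the same route as the paper: argue by contradiction assuming $\xi^+=\xi^-$ finite, use the vertical Catlin geodesics of Lemma~\ref{lem12}, and exploit that the two vertical rays coalesce near the common boundary point to force $(p_n|q_m)_o^C\to\infty$. The only (harmless) difference is bookkeeping: you bound $d_{\widehat{\Omega}}^C(p_n,o)$ from below by the radial estimate $\big|\log\big(r_P(p_n)/r_P(o)\big)\big|$ and build an explicit short connecting path at height $\delta$, whereas the paper bounds $d_{\widehat{\Omega}}^C(\sigma^{\pm}_n(0),o)$ by a fixed $R$ at height $-1$ and uses $d_{\widehat{\Omega}}^C(\sigma^+_n(T),\sigma^-_n(T))\to 0$ at height $-e^{-T}$.
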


\begin{proof}
After a subsequence, we may assume that $\lim_{n\to\infty}(p_n|q_n)_o^C<\infty$ exists. Assume for a contradiction that $\xi^+=\xi^-=:\xi\in\partial\widehat{\Omega}$. Set $a_n:=-r_P(p_n)>0$ and $b_n:=-r_P(q_n)>0$ and let
\[
\sigma^+_n(t):=p_n+(a_n-e^{-t},'0),\qquad\sigma^-_n(t):=q_n+(b_n-e^{-t},'0).
\]
Then Lemma~\ref{lem12} confirms that they are Catlin geodesics.

As $r_P(\sigma_n^+(0))=r_P(\sigma_n^-(0))=-1$ and $p_n,q_n\to\xi$, there exists $R>0$ such that for all $n$ large
\[
d_{\widehat{\Omega}}^C(\sigma^+_n(0),o)<R,\quad\text{and}\quad d_{\widehat{\Omega}}^C(\sigma^-_n(0),o)<R.
\]
Thus for fixed $T>0$,
\begin{align*}
d_{\widehat{\Omega}}^C(o,p_n)&\geq d_{\widehat{\Omega}}^C(\sigma^+_n(0),p_n)-d_{\widehat{\Omega}}^C(\sigma^+_n(0),o)\\
&\geq d_{\widehat{\Omega}}^C(\sigma^+_n(0),p_n)-R\\
&= d_{\widehat{\Omega}}^C(\sigma^+_n(0),\sigma^+_n(T))+d_{\widehat{\Omega}}^C(\sigma^+_n(T),p_n)-R\\
&= d_{\widehat{\Omega}}^C(\sigma^+_n(T),p_n)+T-R
\end{align*}
and similarly
\[
d_{\widehat{\Omega}}^C(o,q_n)\geq d_{\widehat{\Omega}}^C(\sigma^-_n(T),q_n)+T-R.
\]

Also we have
\begin{equation*}
d_{\widehat{\Omega}}^C(p_n,q_n)\leq d_{\widehat{\Omega}}^C(p_n,\sigma^+_n(T))+
d_{\widehat{\Omega}}^C(\sigma_n^+(T),\sigma^-_n(T))+d_{\widehat{\Omega}}^C(\sigma^-_n(T),q_n)
\end{equation*}
and consequently the Gromov product has the estimate
\begin{align*}
2(p_n|q_n)_o^C&=d_{\widehat{\Omega}}^C(p_n,o)+d_{\widehat{\Omega}}^C(q_n,o)-d_{\widehat{\Omega}}^C(p_n,q_n)\\
&\geq2T-2R-d_{\widehat{\Omega}}^C(\sigma^+_n(T),\sigma^-_n(T)).
\end{align*}
But
\begin{align*}
\lim_{n\to\infty}d_{\widehat{\Omega}}^C(\sigma^+_n(T),\sigma^-_n(T))&=\lim_{n\to\infty}d_{\widehat{\Omega}}^C(p_n+
(a_n-e^{-T},0'),q_n+(b_n-e^{-T},0'))\\
&=d_{\widehat{\Omega}}^C(\xi+(-e^{-T},0'),\xi+(-e^{-T},0'))=0
\end{align*}
and this implies that
\[
\lim_{n\to\infty}(p_n|q_n)_o^C\geq T-R.
\]
This is impossible since $T$ is arbitrary and the proof is complete.
\end{proof}

\begin{cor}\label{cor2}
Let $p_n,q_n\in\widehat{\Omega}$ be two sequences with $p_n\to\xi^+\in\partial\widehat{\Omega}\cup\{\infty\}$ and $q_n\to\xi^-\in\partial\widehat{\Omega}\cup\{\infty\}$, and
\[
\lim_{n,m\to\infty}(p_n|q_m)_o^C=\infty,
\]
then $\xi^+=\xi^-$.
\end{cor}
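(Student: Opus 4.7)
I plan to argue by contradiction through the extraction of a limit quasi-geodesic. Assume $\xi^+\neq\xi^-$; since the two limits differ, at least one of them is finite, and after exchanging the roles of $\{p_n\}$ and $\{q_n\}$ we may assume $\xi^-\in\partial\widehat{\Omega}$. Because $(\widehat{\Omega},d_{\widehat{\Omega}}^C)$ is complete (by Remark~\ref{remark1} and the equivalence of the Catlin and Kobayashi distances in Lemma~\ref{lem4}), there exist Catlin geodesics $\sigma_n\colon[-T_n^-,T_n^+]\to\widehat{\Omega}$ from $p_n$ to $q_n$, parametrized by Catlin arc length; reparametrize so that the point $w_n:=\sigma_n(0)$ minimizes $t\mapsto d_{\widehat{\Omega}}^C(o,\sigma_n(t))$. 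Applying the triangle inequality to $o$ and the two sub-geodesics gives
\[
d_{\widehat{\Omega}}^C(p_n,o)+d_{\widehat{\Omega}}^C(q_n,o)\le d_{\widehat{\Omega}}^C(p_n,q_n)+2\,d_{\widehat{\Omega}}^C(w_n,o),
\]
whence $(p_n|q_n)^C_o\le d_{\widehat{\Omega}}^C(o,w_n)$. The hypothesis $(p_n|q_n)^C_o\to\infty$ therefore forces $w_n$ to leave every compact subset of $\widehat{\Omega}$, i.e.\ either to approach $\partial\widehat{\Omega}$ or to escape to Euclidean infinity.

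Each $\sigma_n$ is a Kobayashi $(A,0)$-quasi-geodesic by Lemma~\ref{lem4}. I would then apply Proposition~\ref{prop4} in the setting of Remark~\ref{remark3} (with $\psi_n=\mathrm{id}_{\mathbb{C}^{d+1}}$) to the sequence $\sigma_n$ on $[-T_n^-,T_n^+]$: hypothesis (i) ($T_n^-+T_n^+\to\infty$) follows from $p_n,q_n$ leaving every compact subset of $\widehat{\Omega}$, and hypothesis (iii) ($\liminf\|\sigma_n(-T_n^-)-\sigma_n(T_n^+)\|>0$) follows from $\xi^+\neq\xi^-$. Extracting a subsequence yields a limit $(A,0)$-quasi-geodesic $\tilde\sigma\colon\mathbb{R}\to\widehat{\Omega}$, and Corollary~\ref{cor4} identifies $\lim_{t\to-\infty}\tilde\sigma(t)=\xi^+$ and $\lim_{t\to+\infty}\tilde\sigma(t)=\xi^-$. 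In particular $\tilde\sigma(0)\in\widehat{\Omega}$, whereas $\tilde\sigma(0)=\lim w_n\in\partial\widehat{\Omega}\cup\{\infty\}$ by the previous paragraph---the desired contradiction.

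The main obstacle is verifying the boundedness hypothesis (ii) of Proposition~\ref{prop4}, namely that $\sigma_n([-T_n^-,T_n^+])$ lies in a fixed Euclidean ball $\mathbb{B}(0,R)$. Because $\widehat{\Omega}$ is unbounded, a Catlin geodesic could a priori make a large Euclidean excursion. I would rule this out by exploiting the explicit form of the defining function $r_P(z)=\mathrm{Re}(z_0)+P(z_1,\overline{z}_1)+\sum_{\alpha\ge 2}|z_\alpha|^2$ together with the bounds on $M_{r_P}$ from Lemma~\ref{lem4}: a straight-line Euclidean excursion of size $R$ along the $z_0$-direction accumulates approximately $\log R$ units of Catlin length through the $|b_0|/|r_P|$ term, while tangential displacements in $(z_1,\ldots,z_d)$ are penalized by the $|b_1|\sum_\ell|A_\ell^P|^{1/\ell}/|r_P|^{1/\ell}$ and $|b_\alpha|/\sqrt{|r_P|}$ terms. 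Together these estimates force a Catlin geodesic between bounded endpoints to stay in a bounded Euclidean region; the case $\xi^+=\infty$ is recovered by a preliminary translation that brings the sequence $p_n$ into a bounded region, after which the same argument applies and yields the contradiction.
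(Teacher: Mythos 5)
Your overall strategy coincides with the paper's: argue by contradiction, join $p_n$ to $q_n$ by Catlin geodesics, extract a limit quasi-geodesic via Proposition~\ref{prop4} in the form of Remark~\ref{remark3}, and play the Gromov product off against the distance from $o$ to a point on the connecting geodesic. However, two steps do not work as written. First, your closing contradiction rests on the identification $\tilde\sigma(0)=\lim_n w_n$. Proposition~\ref{prop4} only provides time shifts $T_n$ such that $t\mapsto\sigma_n(t+T_n)$ converges, so $\tilde\sigma(0)=\lim_n\sigma_n(T_n)$, and there is no reason for these shifted base points to be your minimizers $w_n=\sigma_n(0)$. The repair is immediate and is exactly what the paper does: since $\sigma_n(T_n)$ lies on a geodesic from $p_n$ to $q_n$, one has $(p_n|q_n)_o^C\le d_{\widehat{\Omega}}^C(o,\sigma_n(T_n))\to d_{\widehat{\Omega}}^C(o,\tilde\sigma(0))<\infty$, contradicting $(p_n|q_n)_o^C\to\infty$; equivalently, by the minimizing property, $d_{\widehat{\Omega}}^C(o,w_n)\le d_{\widehat{\Omega}}^C(o,\sigma_n(T_n))$ stays bounded, contradicting your first paragraph.

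The genuine gap is in your treatment of hypothesis (ii) of Proposition~\ref{prop4} when $\xi^+=\infty$. In that case the segments $\sigma_n$ lie in no fixed Euclidean ball, and no translation of $\mathbb{C}^{d+1}$ can carry a sequence tending to $\infty$ into a bounded region (nor does a generic translation preserve $\widehat{\Omega}$), so the proposed ``preliminary translation'' cannot restore (ii); likewise, your metric-growth heuristic only controls geodesics with bounded endpoints. The standard remedy, used in the paper itself in the proofs of Proposition~\ref{prop1} and of Theorem~\ref{thm0}, is truncation: fix $R>0$ with $\xi^-\in\mathbb{B}(0,R/4)$, cut $\sigma_n$ at its exit time from $\mathbb{B}(0,R/2)$ on the side of the endpoint converging to the finite limit $\xi^-$, and apply Proposition~\ref{prop4} to the truncated subsegments, whose endpoints still separate in the limit (one tends to $\xi^-$, the other has norm $R/2$), after which the Gromov-product inequality at $\sigma_n(T_n)$ yields the contradiction as above. (A smaller remark of the same kind: hypothesis (i) does not follow merely from $p_n,q_n$ leaving every compact set; one should argue that $d_{\widehat{\Omega}}^C(p_n,q_n)\to\infty$, or work with the truncated segments where this is clear. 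To be fair, the paper's own proof quotes Proposition~\ref{prop4} without spelling out these verifications either.)
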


\begin{proof}
The proof is similar to \cite[Propostion~11.5]{zimmer2016convexmathann}. Assume for a contradiction that $\xi^+\neq\xi^-$, then at least one of them is finite. After a subsequence we may assume that
\[
\lim_{n\to\infty}(p_n|q_n)_o^C=\infty.
\]
Now let $\sigma_n\colon[a_n,b_n]\to\widehat{\Omega}$ be a sequence of Catlin geodesics (thus Kobayashi  $(A,0)$ quasi geodesics) such that $\sigma_n(a_n)=p_n$, $\sigma_n(b_n)=q_n$. By Proposition~\ref{prop4} and Remark~\ref{remark3} there exists $T_n\in[a_n,b_n]$ such that $\sigma_n(t+T_n)$ converges locally uniformly to a geodesic $\sigma\colon\mathbb{R}\to\widehat{\Omega}$. Since $\sigma_n$ are Catlin geodesics, we have
\begin{align*}
(p_n|q_n)_o^C&=\frac{1}{2}(d_{\widehat{\Omega}}^C(p_n,o)+d_{\widehat{\Omega}}^C(o,q_n)-d_{\widehat{\Omega}}^C(p_n,q_n))\\
&=\frac{1}{2}(d_{\widehat{\Omega}}^C(p_n,o)+d_{\widehat{\Omega}}^C(o,q_n)-d_{\widehat{\Omega}}^C(p_n,\sigma_n(T_n))-
d_{\widehat{\Omega}}^C(\sigma_n(T_n),q_n))\\
&\leq d_{\widehat{\Omega}}^C(o,\sigma_n(T_n)).
\end{align*}
Consequently
\[
\infty=\lim_{n\to\infty}(p_n|q_n)_o^C\leq\lim_{n\to\infty}d_{\widehat{\Omega}}^C(o,\sigma_n(T_n))=
d_{\widehat{\Omega}}^C(o,\sigma(0))
\]
which is a contradiction.
\end{proof}

\begin{lem}\label{lem10}
Let $\sigma\colon\mathbb{R}\to\widehat{\Omega}$ be a Catlin geodesic. Then both
\[
\lim_{t\to-\infty}\sigma(t)\quad\text{and}\quad\lim_{t\to+\infty}\sigma(t)
\]
exist in $\overline{\mathbb{C}^{d+1}}$. Moreover, if one of the limits is finite, then
\[
\lim_{t\to-\infty}\sigma(t)\neq\lim_{t\to\infty}\sigma(t).
\]
\end{lem}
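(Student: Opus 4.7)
The plan is to reduce the existence of the limits to Corollary~\ref{cor2} and their distinctness to Proposition~\ref{prop2}; in both cases the key input is a direct computation of the Catlin Gromov product based at $o=\sigma(0)$, using that a Catlin geodesic is by definition an isometric embedding of $\mathbb{R}$ into $(\widehat{\Omega},d^C_{\widehat{\Omega}})$.

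First I would check that $\sigma(t)$ leaves every compact subset of $\widehat{\Omega}$ as $t\to\pm\infty$. Since $d^C_{\widehat{\Omega}}(\sigma(t),\sigma(0))=|t|$ and by Lemma~\ref{lem4} the Catlin and Kobayashi distances on $\widehat{\Omega}$ are bi-Lipschitz equivalent, one has $d_{\widehat{\Omega}}(\sigma(t),\sigma(0))\to\infty$. Since $\widehat{\Omega}$ is complete hyperbolic (Remark~\ref{remark1}), every accumulation point of $\sigma(t)$ in $\overline{\mathbb{C}^{d+1}}$ (viewed as the one-point compactification of $\mathbb{C}^{d+1}$) must therefore lie in $\partial\widehat{\Omega}\cup\{\infty\}$.

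To prove that $\lim_{t\to+\infty}\sigma(t)$ exists, suppose $\sigma(t_n)\to\xi^+$ and $\sigma(s_n)\to\xi^-$ with $t_n,s_n\to+\infty$ and both limits in $\partial\widehat{\Omega}\cup\{\infty\}$. Setting $o:=\sigma(0)$, the geodesic property gives
\[
2(\sigma(t_n)\,|\,\sigma(s_n))_o^C = t_n + s_n - |t_n - s_n| = 2\min(t_n,s_n) \longrightarrow \infty.
\]
Corollary~\ref{cor2} then forces $\xi^+=\xi^-$, so the accumulation set at $+\infty$ is a single point; the same argument handles $t\to-\infty$.

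For the second assertion, after possibly replacing $\sigma(t)$ by $\sigma(-t)$, assume $\xi^-:=\lim_{t\to-\infty}\sigma(t)\in\partial\widehat{\Omega}$ is finite, and suppose for contradiction that $\xi^+:=\lim_{t\to+\infty}\sigma(t)$ equals $\xi^-$. Taking $p_n:=\sigma(n)$, $q_n:=\sigma(-n)$ and again $o=\sigma(0)$, one computes
\[
2(p_n\,|\,q_n)_o^C = d^C_{\widehat{\Omega}}(\sigma(n),\sigma(0)) + d^C_{\widehat{\Omega}}(\sigma(-n),\sigma(0)) - d^C_{\widehat{\Omega}}(\sigma(n),\sigma(-n)) = n+n-2n = 0,
\]
so $\liminf_{n,m\to\infty}(p_n\,|\,q_m)_o^C<\infty$. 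Since $\xi^-\in\partial\widehat{\Omega}$ and $\xi^+\in\partial\widehat{\Omega}\cup\{\infty\}$, Proposition~\ref{prop2} applies and yields $\xi^+\neq\xi^-$, the desired contradiction. The only real subtlety is ensuring the hypotheses of Proposition~\ref{prop2} are met with one endpoint possibly at $\infty$, but this is precisely why the statement requires one of the limits to be finite.
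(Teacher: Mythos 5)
Your proof is correct and follows essentially the same route as the paper: existence of the limits via the Gromov product computation $(\sigma(t_k)\,|\,\sigma(s_k))^C_{\sigma(0)}=\min\{t_k,s_k\}\to\infty$ combined with Corollary~\ref{cor2}, and distinctness (when one limit is finite) via $(\sigma(-t)\,|\,\sigma(t))^C_{\sigma(0)}\equiv 0$ combined with Proposition~\ref{prop2}. Your preliminary observation that all accumulation points of $\sigma(t)$ lie in $\partial\widehat{\Omega}\cup\{\infty\}$ merely makes explicit a step the paper leaves implicit, so it is the same argument, not a different one.
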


\begin{proof}
First we show $\lim_{t\to+\infty}\sigma(t)$ exists (the limit to $-\infty$ is analogous). Assume for a contradiction that there exists $t_k,s_k\to\infty$ such that
\[
\lim_{k\to\infty}\sigma(t_k)\neq\lim_{k\to\infty}\sigma(s_k),
\]
then the Gromov product
\begin{align*}
\lim(\sigma(t_k)|\sigma(s_k))_{\sigma(0)}^C&=\lim_{k\to\infty}\frac{1}{2}\big(t_k+s_k-|t_k-s_k|\big)\\
&=\lim_{k\to\infty}\min\{t_k,s_k\}\to\infty.
\end{align*}
Now Corollary~\ref{cor2} indicates that $\lim_{k\to\infty}\sigma(t_k)=\lim_{k\to\infty}(s_k)$, which is a contradiction.

If one of the limits is finite, the Gromov product
\[
\big(\sigma(-t)|\sigma(t)\big)_{\sigma(0)}^C\equiv0<\infty
\]
and Proposition~\ref{prop2} ensures that the two limits are distinct.
\end{proof}

\section{Scaling at infinity}\label{section7}

A geodesic line $\sigma\colon\mathbb{R}\to\widehat{\Omega}$ is \textit{well behaved} if both limits
\[
\lim_{t\to-\infty}\sigma(t)\quad\text{and}\quad\lim_{t\to+\infty}\sigma(t)
\]
exist in $\overline{\mathbb{C}^{d+1}}$ and are distinct. By Lemma~\ref{lem10} we know that the limits are different if one of them is finite. Thus we need to study the case when both limits are $\infty$. The strategy is inspired by \cite[Section~12]{zimmer2016convexmathann}, we shall take a dilation on $\widehat{\Omega}$ to understand the asymptotic behavior of the geodesics at infinity. The techniques and methods used in this section are similar to those which are used in the previous sections, and for accuracy and completeness, we give the details here.

\subsection{Stability of the Kobayashi metrics}
We begin with a dilation. Set
\[
\chi_n=\begin{pmatrix}
n^{-1}& & & &\\
&n^{-1/(2m')}& & & \\
& &n^{-1/2}& &\\
& & & \ddots&\\
& & & &n^{-1/2}
\end{pmatrix}
\]
where $2m'$ stands for the degree of $P(z_1,\overline{z}_1)$. Denote by
\[
\rho_n:=\frac{1}{n}r_P\circ\chi_n^{-1}=\re{z_0}+P^{(n)}(z_1)+\sum^d_{\alpha=2}|z_\alpha|^2
\]
the defining function of $\chi_n\widehat{\Omega}$, where $P^{(n)}(z_1)=P(n^{1/(2m')}z_1,n^{1/(2m')}\overline{z}_1)/n$. If write
\[
P(z_1,\overline{z}_1)=P_{2m'}(z_1,\overline{z}_1)+\cdots+P_{2}(z_1,\overline{z}_1)
\]
where $P_{k}$ is a real homogeneous polynomial of degree $2\leq k\leq 2m'$, then $\rho_n$ converges uniformly on compact subsets to
\[
\rho(z):=\re(z_0)+P_{2m'}(z_1,\overline{z}_1)+\sum^d_{\alpha=2}|z_\alpha|^2
\]
and thus $\chi_n\widehat{\Omega}$ converges locally uniformly to the domain
\[
\widetilde{\Omega}:=\{z\in\mathbb{C}^{d+1} : \rho(z)<0\}
\]
as $n\to\infty$.

\begin{remark}
The dilation $\chi_n$ can be seen as a scaling process at infinity in the following sense: Let $u_n=(n,'0)\to\infty$ and let $\Phi_n(z)=z$ be the identity. Then $\chi_n=\chi_n\circ\Phi_n$ and $\chi_n(u_n)=(-1,'0)$. But we can not exploit Theorem~\ref{thm4} and Theorem~\ref{thm1} directly, since the two theorems are based on the engulfing property of the distorted polydiscs near the origin and estimates of the Kobayashi distances on bounded Levi corank one domains \cite[Theorem~1.1]{verma2015corankone}, which are not available on the unbounded limit domains.
\end{remark}

Now we give the estimates of the Kobayashi metrics. If we set
\[
A^{\rho_n}_{\ell}(z_1)=\max\bigg\{\bigg|\frac{\partial^{j+k}\rho_n}{\partial z_1^j\partial\overline{z}_1^k}(z_1)\bigg| :j,k>0,j+k=\ell\bigg\},
\]
then for $z\in\chi_n\widehat{\Omega}$ and $X=(x_0,x_1,\ldots,x_d)\in\mathbb{C}^{d+1}$,
\[
M_{\rho_n}(z;X)=\frac{|x_0+2x_1[P^{(n)}]'(z_1)+2\sum^d_{\alpha=2}x_\alpha\overline{z}_\alpha|}{|\rho_n(z)|}+
|x_1|\sum^{2m'}_{\ell=2}\bigg|\frac{A^{\rho_n}_\ell(z_1)}{\rho_n(z)}\bigg|^{\frac{1}{\ell}}+\sum^d_{\alpha=2}\frac{|x_\alpha|}
{\sqrt{|\rho_n(z)|}}
\]
where $[P^{(n)}]'(z_1)={\partial P^{(n)}}(z_1)/{\partial z_1}$, we have the following:
\begin{lem}\label{lem11}
For all $z\in\chi_n\widehat{\Omega}$ and $X\in\mathbb{C}^{d+1}$,
\[
 M_{r_P}(\chi_n^{-1}(z);(d\chi_n^{-1})_{|z}X)=M_{\rho_n}(z;X)
\]
and $M_{\rho_n}\to M_{\rho}$ locally uniformly on $\widehat{\Omega}$.
\end{lem}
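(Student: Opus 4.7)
The strategy is to verify the identity termwise by a direct calculation tracking the rescaling of each ingredient of the Catlin metric under $\chi_n$, and then to obtain the locally uniform convergence by decomposing $P$ into its real homogeneous parts. There is no conceptual difficulty; the work is almost entirely the bookkeeping of powers of $n$.

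Setting $w := \chi_n^{-1}(z)$ gives $w_0 = n z_0$, $w_1 = n^{1/(2m')} z_1$, and $w_\alpha = n^{1/2} z_\alpha$ for $\alpha = 2, \ldots, d$; with the same weights the components of $Y := (d\chi_n^{-1})_{|z} X$ are $y_0 = n x_0$, $y_1 = n^{1/(2m')} x_1$, $y_\alpha = n^{1/2} x_\alpha$. The definition $\rho_n = n^{-1} r_P \circ \chi_n^{-1}$ immediately yields $r_P(w) = n \rho_n(z)$. The chain rule applied to $P^{(n)}(z_1) = P(n^{1/(2m')} z_1, n^{1/(2m')} \overline{z}_1)/n$ gives, for every pair of positive integers $(j,k)$ with $j + k = \ell$,
\[
\frac{\partial^{j+k} P}{\partial z_1^j \partial \overline{z}_1^k}(w_1) = n^{1 - \ell/(2m')} \frac{\partial^{j+k} \rho_n}{\partial z_1^j \partial \overline{z}_1^k}(z_1),
\]
so $A^P_\ell(w_1) = n^{1 - \ell/(2m')} A^{\rho_n}_\ell(z_1)$, and in particular $P'(w_1) = n^{1-1/(2m')}[P^{(n)}]'(z_1)$.

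Substituting these equalities into the three terms of $M_{r_P}(w;Y)$, the powers of $n$ cancel pairwise. In the first term, the numerator factors out an overall $n$ that matches $|r_P(w)| = n|\rho_n(z)|$; in the second, $n^{1/(2m')}$ from $|y_1|$ meets $(n^{1-\ell/(2m')}/n)^{1/\ell} = n^{-1/(2m')}$; in the third, $n^{1/2}/\sqrt{n} = 1$. Termwise the result coincides with $M_{\rho_n}(z;X)$. For the convergence, decompose $P = P_{2m'} + P_{2m'-1} + \cdots + P_2$ into its real homogeneous components, so that
\[
P^{(n)}(z_1) = \sum_{k=2}^{2m'} n^{k/(2m')-1} P_k(z_1, \overline{z}_1).
\]
Since $n^{k/(2m')-1} \to 0$ for $k < 2m'$ and equals $1$ for $k = 2m'$, $P^{(n)} \to P_{2m'}$ uniformly on compact subsets of $\mathbb{C}$ together with all partial derivatives. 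Consequently $[P^{(n)}]'$, $A^{\rho_n}_\ell$, and $\rho_n$ converge locally uniformly to their limits. On any compact $K \Subset \widetilde{\Omega}$ the defining function $|\rho|$ is bounded away from zero, hence so is $|\rho_n|$ for $n$ large, and each of the three terms of $M_{\rho_n}$ then converges uniformly on $K$ to the corresponding term of $M_\rho$.

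The only subtle point in the calculation is matching the exponents of $n$ in the second term, where the $\ell$-th root interacts with the derivative scaling $n^{1 - \ell/(2m')}$; once this is set up the identification is automatic, and the remainder of the argument is routine.
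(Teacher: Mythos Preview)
Your argument is correct and is essentially the same direct bookkeeping the paper has in mind: the paper simply points to the computation in Theorem~\ref{thm2} with $\Phi_n=\mathrm{id}$ (so that $d_n=1$, the unitary $P^n$ and the eigenvalues $\lambda_j^{(n)}$ are trivial, and the two-sided inequality collapses to an equality), whereas you carry out that same scaling calculation explicitly in coordinates. Your handling of the locally uniform convergence via the homogeneous decomposition $P=\sum_k P_k$ is exactly the mechanism behind $\rho_n\to\rho$ already noted in the paper.
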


\begin{proof}
The proof is the same as the proof of Theorem~\ref{thm2} where we let $\Phi_n$ be the identity and we omit the proof.
\end{proof}

We need the following theorem \cite[Theorem~2.1.]{yujiye1995limitsofkobayshimetric}:

\begin{thm}
Let $D_n,D$ be a family of domains in $\mathbb{C}^{d+1}$ such that $D_n$ converges in the Hausdorff topology to $D$. Let $D$ be a taut domain. If, there exists another taut domain $D'$ such that $D_n\subset D'$ for all large $n$, then we have
\[
\lim_{n\to\infty}F_{D_n}(z;X)=F_D(z;X)\quad (z,X)\in D\times\mathbb{C}^{d+1}.
\]
Moreover, the convergence takes place uniformly over compact subsets of $D\times\mathbb{C}^{d+1}$.
\end{thm}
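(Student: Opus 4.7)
The plan is to prove the two semi-continuity inequalities $\limsup F_{D_n}(z;X) \leq F_D(z;X)$ and $\liminf F_{D_n}(z;X) \geq F_D(z;X)$ separately, and then upgrade the resulting pointwise convergence to uniform convergence on compact subsets.

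For the upper bound, fix $\epsilon>0$ and choose a holomorphic disc $f\colon\Delta\to D$ with $f(0)=z$, $(df)_{0}(\lambda)=X$, and $|\lambda|\leq F_D(z;X)+\epsilon$. For $r<1$ sufficiently close to $1$, the set $f(\overline{r\Delta})$ is a compact subset of $D$; by the Hausdorff convergence $D_n\to D$ this compact set is contained in $D_n$ for all $n$ sufficiently large. Then $\zeta\mapsto f(r\zeta)$ is admissible for $F_{D_n}(z;X)$ and yields $F_{D_n}(z;X)\leq|\lambda|/r$. Letting successively $n\to\infty$, $r\to 1^-$, $\epsilon\to 0^+$ gives the desired inequality.

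For the lower bound, choose $\phi_n\in\mathrm{Hol}(\Delta,D_n)$ with $\phi_n(0)=z$, $(d\phi_n)_{0}(\lambda_n)=X$, and $|\lambda_n|\leq F_{D_n}(z;X)+1/n$. Since $D_n\subset D'$ and $D'$ is taut, $\{\phi_n\}$ forms a normal family in $\mathrm{Hol}(\Delta,D')$; the compactly-divergent alternative is ruled out by the fixed basepoint $\phi_n(0)=z\in D\subset\overline{D'}$. Passing to a subsequence, $\phi_n\to\phi\in\mathrm{Hol}(\Delta,D')$ locally uniformly. Hausdorff convergence forces $\phi(\Delta)\subset\overline{D}$. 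Tautness of $D$ then promotes this to $\phi(\Delta)\subset D$: a contact point $\zeta_{0}\in\Delta$ with $\phi(\zeta_0)\in\partial D$ would, via a local plurisubharmonic defining function for the pseudoconvex domain $D$ and the maximum principle applied to the subharmonic function $\rho\circ\phi$, force $\phi$ to be constantly in $\partial D$ near $\zeta_0$, hence globally, contradicting $\phi(0)=z\in D$. Cauchy estimates give $(d\phi)_{0}(\lambda_\infty)=X$ for $\lambda_\infty=\lim\lambda_n$, so $F_D(z;X)\leq|\lambda_\infty|\leq\liminf F_{D_n}(z;X)$.

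For uniform convergence on a compact set $K\Subset D\times\mathbb{C}^{d+1}$, I would argue by contradiction: suppose there is a subsequence with $(z_{n_k},X_{n_k})\to(z_0,X_0)\in K$ and $|F_{D_{n_k}}(z_{n_k};X_{n_k})-F_D(z_0;X_0)|\geq\delta>0$. Rerunning the two arguments above with moving base-data (using continuity of $F_D$ on $D\times\mathbb{C}^{d+1}$, which holds because $D$ is taut hence hyperbolic) produces a contradiction. The main obstacle is the lower-bound step, specifically ensuring that the normal limit $\phi$ does not slip into $\partial D$; this is precisely where the tautness of the limit domain $D$ and the availability of a local plurisubharmonic defining function are essential, and the result would in general fail without this hypothesis.
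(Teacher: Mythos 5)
Your overall architecture (upper bound by shrinking an almost-extremal disc of $D$ and using that its compact image eventually lies in $D_n$; lower bound by normal families in the ambient taut domain $D'$ with the fixed base point excluding compact divergence; then a compactness argument for uniformity) is the standard one for this statement; note, though, that the paper does not prove this theorem at all, it simply quotes it from Yu's paper, so the comparison can only be with the expected standard proof. The upper-bound half and the extraction of the limit disc $\phi\in\mathrm{Hol}(\Delta,D')$ with $\phi(\Delta)\subset\overline{D}$ are essentially correct (minor point: compact divergence is ruled out because $z\in D_n\subset D'$, i.e.\ $\{z\}$ is a compact subset of $D'$ itself, not merely of $\overline{D'}$).

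The genuine gap is the step ``tautness of $D$ promotes $\phi(\Delta)\subset\overline{D}$, $\phi(0)\in D$ to $\phi(\Delta)\subset D$, via a local plurisubharmonic defining function and the maximum principle.'' First, a taut (even complete hyperbolic) domain need not admit, near a given boundary point, a plurisubharmonic function defined in a full neighborhood, vanishing on $\partial D$ and negative on $D$; pseudoconvexity only gives exhaustions from inside, not such local barriers. Second, and more seriously, the intermediate claim you are trying to prove is false for general taut $D$: take $D=\Delta\times\Delta^*$ (a product of complete hyperbolic domains, hence taut) and $\phi(\zeta)=\bigl(0,\tfrac{2\zeta+1}{2+\zeta}\bigr)$; then $\phi(\Delta)\subset\overline{D}$, $\phi(0)=(0,1/2)\in D$, yet $\phi(-1/2)=(0,0)\in\partial D$, and indeed no local plurisubharmonic barrier exists at $(0,0)$. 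So any correct argument must use more than ``$\phi(\Delta)\subset\overline D$ and $D$ taut'': it must exploit that $\phi$ arises as a limit of discs $\phi_n$ with $\phi_n(\Delta)\subset D_n$, together with a precise notion of Hausdorff convergence. This is not a pedantic point: if ``Hausdorff convergence'' is read only as ``every compact of $D$ is eventually in $D_n$ and every compact of the complement of $\overline D$ is eventually disjoint from $D_n$,'' the theorem itself fails (e.g.\ $D_n=\Delta^2$, $D'=\Delta^2$, $D=\Delta\times\Delta^*$, where $F_{D_n}<F_D$ in the second direction), so pinning down the convergence and using it in the lower bound is exactly where the real work lies. In the situations where this paper applies the theorem the difficulty disappears, because the limit domains $\widehat{\Omega}$, $\widetilde{\Omega}$ carry \emph{global} continuous plurisubharmonic defining functions ($r_P$, $\rho$), and then your maximum-principle argument works verbatim applied to the global subharmonic function $\rho\circ\phi\le 0$ with $\rho(\phi(0))<0$; but as a proof of the quoted theorem for an arbitrary taut limit domain, the key step is unjustified as written.
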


\begin{prop}\label{prop3}
For $(z;X)\in\widetilde{\Omega}\times\mathbb{C}^{d+1}$,
\[
\lim_{n\to\infty}F_{\chi_n\widehat{\Omega}_n}(z;X)=F_{\widetilde{\Omega}}(z;X).
\]
Moreover, the convergence is uniform on compact subsets of $\widetilde{\Omega}\times\mathbb{C}^{d+1}$.
\end{prop}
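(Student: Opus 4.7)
The plan is to apply the stability theorem recalled immediately above to the sequence $D_n := \chi_n\widehat{\Omega}$ with limit $D := \widetilde{\Omega}$. Three ingredients are required: (i) Hausdorff convergence, (ii) tautness of the limit, and (iii) a single taut domain $D'$ with $\chi_n\widehat{\Omega}\subset D'$ for all $n$ large. Part (i) is already in hand from the locally uniform convergence $\rho_n\to\rho$ established just above the statement. Part (ii) follows by repeating Remark~\ref{remark1} applied to $\widetilde{\Omega}$: its boundary is smooth pseudo-convex of Levi corank one with finite type $2m'$, so \cite{cho1996peakpoints} produces a holomorphic peak function at every finite boundary point, \cite[Lemma~1]{bedford1991noncompactautomorphism} at infinity, and \cite[Theorem~1]{gaussier1999tautness} then yields complete hyperbolicity, hence tautness.

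The main work is part (iii). The natural candidate is
\[
D' := \bigl\{z\in\mathbb{C}^{d+1} : \re(z_0) + Q(z_1,\overline{z_1}) + \sum_{\alpha=2}^d |z_\alpha|^2 < 0 \bigr\},
\]
with $Q$ a subharmonic polynomial of degree $2m'$ without harmonic terms satisfying $Q\leq P^{(n)}$ pointwise for all $n\geq 1$. Decomposing $P = P_{2m'} + \cdots + P_2$ into homogeneous pieces yields
\[
P^{(n)}(z_1) = P_{2m'}(z_1,\overline{z_1}) + \sum_{k<2m'} n^{k/(2m')-1}\, P_k(z_1,\overline{z_1}),
\]
where each multiplier $n^{k/(2m')-1}$ lies in $(0,1]$ for $n\geq 1$. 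Taking $Q$ of the form $c_0|z_1|^{2m'} - M$ for suitable constants $c_0 > 0$ and $M > 0$ (using the coercivity of $P_{2m'}$, which gives $P_{2m'}(z_1)\geq c_0|z_1|^{2m'}$, together with uniform control of the lower-order perturbations) makes $D'$ a Levi corank one domain of finite type $2m'$, taut by the argument of (ii), and containing every $\chi_n\widehat{\Omega}$. The stability theorem then yields both pointwise convergence and local uniform convergence on compact subsets of $\widetilde{\Omega}\times\mathbb{C}^{d+1}$.

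The principal obstacle is the uniform lower bound $P^{(n)}\geq c_0|z_1|^{2m'} - M$. The strategy is to split: on $\{|z_1|\leq R_0\}$ the polynomial $P$ is bounded, so $P^{(n)}$ is uniformly bounded below since each factor $n^{k/(2m')-1}\leq 1$; on $\{|z_1| > R_0\}$ the coercivity of $P_{2m'}$ (which follows from the finite type of $\partial\widetilde{\Omega}$ at infinity) dominates the lower-order error $\sum_{k<2m'} n^{k/(2m')-1}|P_k(z_1)| \lesssim |z_1|^{2m'-1}$ for large $|z_1|$. These estimates parallel Fiacchi's $\mathbb{C}^2$ treatment in \cite{Fiacchi2020Gromov} and should carry over thanks to Lemma~\ref{lem11}, which already supplies $M_{\rho_n}\to M_\rho$ locally uniformly.
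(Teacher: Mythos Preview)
Your overall strategy matches the paper's: apply Yu's stability theorem with $D_n=\chi_n\widehat{\Omega}$ and $D=\widetilde{\Omega}$. Parts (i) and (ii) are fine, and your alternative route to tautness of $\widetilde{\Omega}$ via peak functions (rather than the paper's appeal to the WB-domain framework of \cite{gaussier2016positivity}) is legitimate.

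The genuine gap is in (iii). You assert that $P_{2m'}(z_1)\geq c_0|z_1|^{2m'}$ for some $c_0>0$, calling this ``coercivity'', but subharmonicity together with the absence of harmonic terms does \emph{not} force $P_{2m'}$ to be bounded below by a positive multiple of $|z_1|^{2m'}$. A concrete obstruction: $P_{2m'}(z_1)=|z_1|^4+|z_1|^2\,\re(z_1^2)$ is real, homogeneous of degree $4$, has no harmonic terms, and satisfies $\partial_{z_1}\partial_{\overline{z_1}}P_{2m'}=4|z_1|^2+3\,\re(z_1^2)\geq|z_1|^2>0$ away from the origin, yet $P_{2m'}(ir)=0$ for every real $r$. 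If the top-degree part of $P$ is of this shape, then along $z_1=ir$ one has $P^{(n)}(ir)\to P_{2m'}(ir)=0$ while $c_0r^{2m'}-M\to+\infty$, so no choice of $c_0>0$ and $M$ makes $Q\leq P^{(n)}$ hold uniformly in $n$, and your container $D'$ fails to contain $\chi_n\widehat{\Omega}$ for large $n$. Your appeal to ``finite type of $\partial\widetilde{\Omega}$ at infinity'' does not help: finite type controls the order of vanishing of the Levi form (equivalently, of $\Delta P_{2m'}$), not the sign of $P_{2m'}$ itself.

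The paper avoids this by keeping $P_{2m'}$ in the defining function of $D'$ rather than replacing it by $c_0|z_1|^{2m'}$. One picks $\epsilon_0>0$ small enough that $P_{2m'}-2\epsilon_0|z_1|^{2m'}$ is still subharmonic, bounds the lower-order perturbation $|P^{(n)}-P_{2m'}|$ by $\epsilon_0|z_1|^{2m'}+C'$ uniformly in $n$, and takes
\[
D'=\Big\{\re(z_0)+P_{2m'}(z_1,\overline{z}_1)-2\epsilon_0|z_1|^{2m'}+\sum_{\alpha=2}^d|z_\alpha|^2<C_0+C'\Big\}.
\]
After a real shift in $z_0$ this is again a weighted-homogeneous Levi corank one domain, hence complete hyperbolic and taut.
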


\begin{proof}
As $\chi_n\widehat{\Omega}$ is pseudo-convex, the limit domain $\widetilde{\Omega}$ is also pseudo-convex and thus $P_{2m'}(z_1,\overline{z}_1)$ is real homogeneous of degree $2m'$, subharmonic, contains no harmonic terms. Now $\widetilde{\Omega}$ is a WB-domain in the sense of \cite[Definition~4.1]{gaussier2016positivity} and thus it is taut. We only need to find a bumping domain $D'$ which is taut and contains $\chi_n\widehat{\Omega}$ for all $n$ large.

Let $\epsilon_0>0$ such that $P_{2m'}(z_1,\overline{z}_1)-2\epsilon_0|z_1|^{2m'}$ is still subharmonic. Then there exists constant $R_0>0$ such that
\[
P(z_1,\overline{z}_1)\geq P_{2m'}(z_1,\overline{z}_1)-\epsilon_0|z_1|^{2m'}
\]
for all $|z_1|\geq R_0$ and consequently there is a constant $C_0>0$ such that
\[
P(z_1,\overline{z}_1)\geq P_{2m'}(z_1,\overline{z}_1)-\epsilon_0|z_1|^{2m'}-C_0
\]
for all $z_1\in\mathbb{C}$. Similar the remaining terms of $P^{(n)}(z_1)$ where the degree $<2m'$ will be controlled by $\epsilon_0|z_1|^{2m'}+C'$ for some constant $C'>0$ and thus $\chi_n\widehat{\Omega}\subset D'$ where
\[
D':=\{z\in\mathbb{C}^{d+1} : \re(z_0)+P_{2m'}(z_1,\overline{z}_1)-2\epsilon_0|z_1|^{2m'}+\sum^d_{\alpha=2}|z_\alpha|^2-C_0-C'<0\}
\]
for all $n$ large. As $D'$ is also weighted homogeneous (after a shift of constant $C_0+C'$), hence complete hyperbolic and the proof is complete.
\end{proof}

Now we can give the global estimate of the Kobayashi metric on $\widetilde{\Omega}$:

\begin{lem}\label{lem13}
There exits $A\geq1$ such that
\[
A^{-1}M_\rho(z;X)\leq F_{\widetilde{\Omega}}(z;X)\leq AM_\rho(z;X)
\]
for all $(z;X)\in\widetilde{\Omega}\times\mathbb{C}^{d+1}$.
\end{lem}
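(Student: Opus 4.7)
The plan is to transport the global Catlin-type estimate already established on $\widehat\Omega$ (Lemma~\ref{lem4}) to the rescaled domains $\chi_n\widehat\Omega$ via the biholomorphism $\chi_n$, and then pass to the limit to obtain the estimate on $\widetilde\Omega$. This mirrors the final step of Lemma~\ref{lem4} but uses the ingredients developed in this section: Lemma~\ref{lem11} for the metric identity $M_{r_P}(\chi_n^{-1}(z);(d\chi_n^{-1})_{|z}X)=M_{\rho_n}(z;X)$ with $M_{\rho_n}\to M_\rho$ locally uniformly, and Proposition~\ref{prop3} for the stability $F_{\chi_n\widehat\Omega}\to F_{\widetilde\Omega}$ locally uniformly.

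First I would fix $(z;X)\in\widetilde\Omega\times\mathbb{C}^{d+1}$. Since $\chi_n\widehat\Omega$ converges to $\widetilde\Omega$ in the local Hausdorff sense (indeed, $\rho_n\to\rho$ uniformly on compact sets), there is $N$ such that $z\in\chi_n\widehat\Omega$ for all $n\geq N$. For such $n$, biholomorphic invariance of the Kobayashi metric gives
\[
F_{\chi_n\widehat\Omega}(z;X)=F_{\widehat\Omega}\bigl(\chi_n^{-1}(z);(d\chi_n^{-1})_{|z}X\bigr),
\]
and applying the already-proved Lemma~\ref{lem4} to the right-hand side together with Lemma~\ref{lem11} yields
\[
A^{-1}M_{\rho_n}(z;X)\leq F_{\chi_n\widehat\Omega}(z;X)\leq AM_{\rho_n}(z;X)
\]
with the same constant $A$ as in Lemma~\ref{lem4}.

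Finally I would let $n\to\infty$. The left-hand and right-hand sides converge to $A^{-1}M_\rho(z;X)$ and $AM_\rho(z;X)$ respectively by Lemma~\ref{lem11}, while the middle term converges to $F_{\widetilde\Omega}(z;X)$ by Proposition~\ref{prop3}. This yields exactly the asserted inequality $A^{-1}M_\rho(z;X)\leq F_{\widetilde\Omega}(z;X)\leq AM_\rho(z;X)$ for every $(z;X)\in\widetilde\Omega\times\mathbb{C}^{d+1}$.

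The main (very mild) obstacle is simply to confirm that $z$ eventually lies in $\chi_n\widehat\Omega$ so that the chain of identities makes sense; this is immediate from the convergence $\rho_n\to\rho$ together with the fact that $\rho(z)<0$ is an open condition. No new hard estimates are needed: the essential analytic work has been done in Lemma~\ref{lem4}, Lemma~\ref{lem11} and Proposition~\ref{prop3}, and the present lemma is purely a matter of combining them and passing to the limit.
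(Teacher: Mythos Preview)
Your proposal is correct and follows essentially the same route as the paper: the paper's proof is the one-liner ``By Theorem~\ref{thm3}, the previous proposition and Lemma~\ref{lem11}, since $M_{\rho_n}\to M_\rho$ the proof is the same as Lemma~\ref{lem4},'' and you have spelled out exactly that argument. The only cosmetic difference is that you invoke Lemma~\ref{lem4} directly for the estimate on $\widehat\Omega$, whereas the paper cites Theorem~\ref{thm3} (which is the ultimate source of the constant $A$ via Lemma~\ref{lem4}); your citation is in fact the more natural one here since $\widehat\Omega$ is unbounded.
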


\begin{proof}
By Theorem~\ref{thm3}, the previous proposition and Lemma~\ref{lem11}, since $M_{\rho_n}\to M_\rho$ the proof is the same as Lemma~\ref{lem4}.
\end{proof}

\subsection{Stability of the Kobayashi distances}

Now we show that the Kobayashi distances are also stable under the dilation $\chi_n$. First we prove the following:

\begin{lem}\label{lem8}
For any $p,q\in\widetilde{\Omega}$,
\begin{equation*}
\limsup_{n\to\infty}d_{\chi_n\widehat{\Omega}}(p,q)\leq d_{\widetilde{\Omega}}(p,q),
\end{equation*}
and the convergence is uniform on compact subsets of $\widetilde{\Omega}\times\widetilde{\Omega}$.
\end{lem}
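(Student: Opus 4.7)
The plan is to mimic the proof of Lemma~\ref{lem2} almost verbatim, with $\chi_n\widehat{\Omega}$ and $\widetilde{\Omega}$ playing the roles of $\Omega_n$ and $\widehat{\Omega}$, respectively. The two ingredients needed are: (i) the local uniform convergence $F_{\chi_n\widehat{\Omega}} \to F_{\widetilde{\Omega}}$ on compact subsets of $\widetilde{\Omega} \times \mathbb{C}^{d+1}$, which is exactly Proposition~\ref{prop3}; and (ii) the complete Kobayashi hyperbolicity of $\widetilde{\Omega}$, so that Kobayashi-bounded sets in $\widetilde{\Omega}$ have compact closure inside $\widetilde{\Omega}$.

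Granting these, I would fix $\epsilon>0$ and a compact set $K\Subset\widetilde{\Omega}$. By complete hyperbolicity there is a uniform constant $R'>0$ with $d_{\widetilde{\Omega}}(p,q)\leq R'$ for all $p,q\in K$. For each such pair pick a piecewise $\mathcal{C}^1$ curve $\gamma\colon[0,1]\to\widetilde{\Omega}$ with $\gamma(0)=p$, $\gamma(1)=q$ and
\[
\int^1_0 F_{\widetilde{\Omega}}(\gamma(t);\gamma'(t))\,dt\leq d_{\widetilde{\Omega}}(p,q)+\epsilon/2.
\]
Every point of such a $\gamma$ lies within Kobayashi distance $R'+\epsilon/2$ of $K$, so all these $\gamma$'s are contained in the single compact set $K':=\{x\in\widetilde{\Omega} : d_{\widetilde{\Omega}}(x,K)\leq R'+\epsilon/2\}$, which depends only on $K$ and $\epsilon$. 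Since $\chi_n\widehat{\Omega}\to\widetilde{\Omega}$ locally uniformly, we have $K'\subset\chi_n\widehat{\Omega}$ for all $n$ sufficiently large. Applying the uniform convergence of Proposition~\ref{prop3} on $K'$ (and on the bounded set of tangent vectors $\gamma'(t)$) gives
\[
d_{\chi_n\widehat{\Omega}}(p,q)\leq\int^1_0 F_{\chi_n\widehat{\Omega}}(\gamma(t);\gamma'(t))\,dt\leq\int^1_0 F_{\widetilde{\Omega}}(\gamma(t);\gamma'(t))\,dt+\epsilon/2\leq d_{\widetilde{\Omega}}(p,q)+\epsilon
\]
for all $n$ large, uniformly in $p,q\in K$. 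Taking $\limsup$ and then letting $\epsilon\to0$ yields the lemma together with its uniform-on-compacts statement.

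The main technical point I expect to be the obstacle is verifying the complete hyperbolicity of $\widetilde{\Omega}$ needed to build the compact set $K'$; tautness alone (which Proposition~\ref{prop3} uses) is insufficient. However, $\widetilde{\Omega}$ is weighted homogeneous under the one-parameter family of dilations suggested by $\chi_n$, and $\rho$ is a subharmonic polynomial with $P_{2m'}$ containing no harmonic terms, so by the existence of holomorphic peak functions at every finite boundary point and at infinity (as used in Remark~\ref{remark1}) $\widetilde{\Omega}$ is complete hyperbolic; alternatively one can note that $\widetilde{\Omega}$ is sandwiched between itself and the weighted homogeneous bumping domain $D'$ constructed in the proof of Proposition~\ref{prop3}. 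Once this is in place the rest is a direct translation of Lemma~\ref{lem2}.
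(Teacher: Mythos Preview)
Your proposal is correct and follows exactly the approach the paper takes: the paper's proof of Lemma~\ref{lem8} simply states that it is the same as Lemma~\ref{lem2}, using Proposition~\ref{prop3} for the uniform convergence of the infinitesimal metrics and the complete hyperbolicity of $\widetilde{\Omega}$. Your identification and justification of the complete hyperbolicity of $\widetilde{\Omega}$ (via peak functions/weighted homogeneity, as in Remark~\ref{remark1}) is precisely the missing ingredient the paper invokes without further comment.
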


\begin{proof}
The proof is the same as Lemma~\ref{lem2} by Proposition~\ref{prop3} and the complete hyperbolicity of $\widetilde{\Omega}$.
\end{proof}

For any fixed $R>0$, denote by $\mathbf{B}_{\chi_n\widehat{\Omega}}((-1,'0),R)$ the Kobayashi ball of $\chi_n\widehat{\Omega}$ centered at $(-1,'0)$ with radius $R$, then we have the following:

\begin{lem}\label{lem9}
For any $R>0$ fixed, the ball $\mathbf{B}_{\chi_n\widehat{\Omega}}((-1,'0),R)$ is uniformly compactly contained in $\widetilde{\Omega}$ for all $n$ large.
\end{lem}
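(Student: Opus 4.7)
The plan is to combine two ingredients already available: the bounding taut domain $D'$ constructed in the proof of Proposition~\ref{prop3}, and the global Catlin-type metric estimate on $\widehat{\Omega}$ (Lemma~\ref{lem4}). The first will confine the ball in the Euclidean sense, and the second will give a uniform lower bound on $|\rho_n|$ along the ball so that it cannot accumulate on $\partial\widetilde{\Omega}$. Along the way, note that $P^{(n)}(0)=0$ gives $\rho_n((-1,'0))=-1$, and direct inspection of the explicit defining functions shows $(-1,'0)\in D'\cap\widetilde{\Omega}$.

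First I would use the inclusion $\chi_n\widehat{\Omega}\subset D'$, which is valid for all $n$ large by the construction of $D'$ in the proof of Proposition~\ref{prop3}. The distance-decreasing property of the Kobayashi distance yields
\[
\mathbf{B}_{\chi_n\widehat{\Omega}}((-1,'0),R)\subset \mathbf{B}_{D'}((-1,'0),R).
\]
Since $D'$ is complete hyperbolic (being a weighted homogeneous WB-domain up to a constant shift, as noted in the proof of Proposition~\ref{prop3}), the closed ball $\overline{\mathbf{B}_{D'}((-1,'0),R)}$ is compact in $D'$; in particular it lies in some fixed Euclidean ball $\mathbb{B}(0,M)$ with $M=M(R)>0$.

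Next I would derive a uniform lower bound on $|\rho_n|$ along the ball. The biholomorphic invariance of the Kobayashi metric under $\chi_n$, together with Lemma~\ref{lem11} and the estimate of Lemma~\ref{lem4}, gives
\[
F_{\chi_n\widehat{\Omega}}(z;X)\geq A^{-1}M_{\rho_n}(z;X).
\]
Running the $z_0$-component argument from \eqref{eq7} with $\rho_n$ in place of $r_P$, and using $\rho_n((-1,'0))=-1$, one obtains
\[
d_{\chi_n\widehat{\Omega}}((-1,'0),z)\geq A^{-1}\bigl|\log|\rho_n(z)|\bigr|,
\]
so every $z\in\mathbf{B}_{\chi_n\widehat{\Omega}}((-1,'0),R)$ satisfies $\rho_n(z)\leq -e^{-AR}$.

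Combining the two bounds, every point of the ball lies in $\overline{\mathbb{B}(0,M)}\cap\{\rho_n\leq -e^{-AR}\}$. Using the uniform convergence $\rho_n\to\rho$ on the fixed compact set $\overline{\mathbb{B}(0,M)}$, for all $n$ large one has $|\rho_n-\rho|<e^{-AR}/2$ there, which forces $\rho(z)\leq -e^{-AR}/2$. Hence
\[
\mathbf{B}_{\chi_n\widehat{\Omega}}((-1,'0),R)\subset K:=\overline{\mathbf{B}_{D'}((-1,'0),R)}\cap\bigl\{\rho\leq -e^{-AR}/2\bigr\},
\]
which is a fixed compact subset of $\widetilde{\Omega}$, giving the uniform compact containment. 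The main subtle point is the availability of the taut bounding domain $D'$: it plays the role that the engulfing-polydisc argument of Lemma~\ref{lem3} played in the first scaling, which is not available here because the base point in the unbounded model $\widehat{\Omega}$ recedes to infinity; once $D'$ is in hand, everything else is a repackaging of previously established comparison estimates.
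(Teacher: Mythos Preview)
Your proof is correct. Both your argument and the paper's rely on the same core estimate, namely the transplanted version of \eqref{eq7},
\[
d_{\chi_n\widehat{\Omega}}((-1,'0),z)\geq A^{-1}\bigl|\log|\rho_n(z)|\bigr|,
\]
which bounds $\rho_n$ away from $0$ on the Kobayashi ball. The paper proceeds by contradiction: taking a hypothetical cluster point $\xi\in\partial\widetilde{\Omega}\cup\{\infty\}$ and arguing that $|\log|\rho_n(p_n)||\to\infty$ in either case. Your proof is instead constructive: you exhibit a fixed compact $K\subset\widetilde{\Omega}$ containing all the balls.

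The genuine difference is in how escape to infinity is excluded. The paper's sentence ``$\rho(\xi)=-\infty$'' when $\xi=\infty$ is heuristic; a priori one can have $p_n\to\infty$ in $\chi_n\widehat{\Omega}$ with $\rho_n(p_n)$ bounded (e.g.\ along level sets of $\rho_n$), so the contradiction does not follow from the $\rho_n$--estimate alone. You close this by invoking the taut bumping domain $D'$ from Proposition~\ref{prop3}: the inclusion $\chi_n\widehat{\Omega}\subset D'$ and the distance-decreasing property trap the ball inside the compact set $\overline{\mathbf{B}_{D'}((-1,'0),R)}$, giving Euclidean boundedness for free. This is a cleaner substitute for the engulfing-polydisc machinery behind Lemma~\ref{lem3}, and it makes the infinity case fully rigorous. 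After that, your passage from $\rho_n\le -e^{-AR}$ to $\rho\le -e^{-AR}/2$ via uniform convergence on $\overline{\mathbb{B}(0,M)}$ is exactly what is needed to keep the ball away from $\partial\widetilde{\Omega}$.
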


\begin{proof}
For any $R>0$ fixed, assume for a contradiction that there exists a sequence $\{p_n\}\subset\mathbf{B}_{\chi_n\widehat{\Omega}}((-1,'0),R)$ which cluster at some point $\xi\in\partial\widetilde{\Omega}\cup\{\infty\}$. Thus $\rho(\xi)=0$ or $\rho(\xi)=-\infty$. Now \eqref{eq7} (replace $r_P$ by $\rho_n$) implies that
\begin{align*}
R\geq d_{\chi_n\widehat{\Omega}}((-1,'0),p_n)\geq A^{-1}\big|\log|\rho_n(p_n)|\big|\to A^{-1}\big|\log|\rho(\xi)|\big|=\infty
\end{align*}
as $n\to\infty$, which is impossible and the proof is complete.
\end{proof}

\begin{prop}\label{prop6}
For any $p,q\in\widetilde{\Omega}$,
\begin{equation*}
\lim_{n\to\infty}d_{\chi_n\widehat{\Omega}}(p,q)= d_{\widetilde{\Omega}}(p,q),
\end{equation*}
and the convergence is uniform on compact subsets of $\widetilde{\Omega}\times\widetilde{\Omega}$.
\end{prop}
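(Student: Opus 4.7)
The proof will follow the blueprint of Theorem~\ref{thm1} almost verbatim, but the argument is cleaner here because the reference point $(-1,'0)$ lies in $\chi_n\widehat{\Omega}$ for every $n\geq 1$, so there is no analog of the auxiliary sequence $v_n$ to track. Lemma~\ref{lem8} already supplies the $\limsup$ inequality, uniformly on compact subsets of $\widetilde{\Omega}\times\widetilde{\Omega}$, so everything reduces to proving the matching $\liminf$ inequality.

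To bootstrap, fix a compact set $K\Subset\widetilde{\Omega}$ containing $(-1,'0)$. Since $\widetilde{\Omega}$ is complete hyperbolic (it is a WB-domain, as used in the proof of Proposition~\ref{prop3}), $\widetilde{\Omega}=\bigcup_{\nu\geq 1}\mathbf{B}_{\widetilde{\Omega}}((-1,'0),\nu)$, so there is $\nu_0$ with $K\subset\mathbf{B}_{\widetilde{\Omega}}((-1,'0),\nu_0)$. Applying Lemma~\ref{lem8} to the pair consisting of $(-1,'0)$ and a point of $K$ produces $R_0>0$ such that $K\subset\mathbf{B}_{\chi_n\widehat{\Omega}}((-1,'0),R_0)$ for all $n$ large, and consequently $d_{\chi_n\widehat{\Omega}}(p,q)\leq 2R_0$ for all $p,q\in K$ by the triangle inequality.

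Now given $\epsilon>0$, choose $R'$ with $R'\gg 4R_0$ and $\tanh(R'-2R_0)\geq 1-\epsilon$. The triangle inequality gives
\[
\mathbf{B}_{\chi_n\widehat{\Omega}}(p,R')\subset\mathbf{B}_{\chi_n\widehat{\Omega}}((-1,'0),2R'),\qquad p\in K,
\]
and Lemma~\ref{lem9} tells us that $\mathbf{B}_{\chi_n\widehat{\Omega}}((-1,'0),2R')\Subset\widetilde{\Omega}$ uniformly for $n$ large. Apply Lemma~\ref{lem1} with $D=\chi_n\widehat{\Omega}$ and $D'=\mathbf{B}_{\chi_n\widehat{\Omega}}((-1,'0),2R')$ and $b=R'$ (noting $a=d_{\chi_n\widehat{\Omega}}(p,q)\leq 2R_0$):
\[
d_{D'}(p,q)\leq\frac{d_{\chi_n\widehat{\Omega}}(p,q)}{\tanh(R'-2R_0)}\leq\frac{d_{\chi_n\widehat{\Omega}}(p,q)}{1-\epsilon}.
\]
Because $D'\subset\widetilde{\Omega}$, the decreasing property of the Kobayashi distance yields $d_{\widetilde{\Omega}}(p,q)\leq d_{D'}(p,q)$. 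Letting $n\to\infty$ and then $\epsilon\to 0$, combined with Lemma~\ref{lem8}, gives the equality and uniformity on $K\times K$.

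The only potential obstruction is ensuring that the compactness of Kobayashi balls around $(-1,'0)$ is genuinely uniform in $n$; this is precisely the content of Lemma~\ref{lem9}, which was established via the global estimate $d_{\chi_n\widehat{\Omega}}((-1,'0),z)\gtrsim|\log|\rho_n(z)||$ coming from Lemma~\ref{lem13} and the integration argument in~\eqref{eq7}. With Lemma~\ref{lem9} in hand, the argument is formally identical to Theorem~\ref{thm1}, and in particular there is no need to invoke the engulfing property of polydiscs (which was the delicate ingredient underlying Lemma~\ref{lem3}).
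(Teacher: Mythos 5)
Your proposal is correct and follows essentially the same route as the paper's proof: both reduce to the $\liminf$ inequality, trap $K$ in a Kobayashi ball $\mathbf{B}_{\chi_n\widehat{\Omega}}((-1,'0),\cdot)$ via Lemma~\ref{lem8} and complete hyperbolicity of $\widetilde{\Omega}$, invoke Lemma~\ref{lem9} for the uniform compact containment, and conclude with Lemma~\ref{lem1} plus the decreasing property of the Kobayashi distance. Your observation that the triangle inequality replaces the analogue of Lemma~\ref{lem7}, and that no engulfing property is needed here, matches the structure of the paper's argument.
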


\begin{proof}
Let $K\Subset\widetilde{\Omega}$ be the compact subset which contains $(-1,'0)$. As $\widetilde{\Omega}$ is complete hyperbolic,
\[
\widetilde{\Omega}=\bigcup_{\nu=1}\mathbf{B}_{\widetilde{\Omega}}((-1,'0),\nu)
\]
and we choose $\nu_K$ such that $K\subset\mathbf{B}_{\widetilde{\Omega}}((-1,'0),\nu_K)$. Also Lemma~\ref{lem8} implies that there exists $C_K>0$ and $N>0$ such that $\mathbf{B}_{\widetilde{\Omega}}((-1,'0),\nu_K)
\subset\mathbf{B}_{\chi_n\Omega}((-1,'0),C_K)$ for all $n>N$.

For any $p,q\in K$, we have
\[
d_{\chi_n\widehat{\Omega}}(p,q)\leq d_{\chi_n\widehat{\Omega}}(p,(-1,'0))+d_{\chi_n\widehat{\Omega}}((-1,'0),q)\leq 2C_K
\]
for all $n>N$. For small $\epsilon>0$, chose $R'\gg8C_K$ such that
\[
\tanh(R'-2C_K)\geq1-\epsilon.
\]
It follows by Lemma~\ref{lem1} ($\chi_n\widehat{\Omega}$ and $\mathbf{B}_{\chi_n\widehat{\Omega}}((-1,'0),2R')$ play the role of $D$ and $D'$, respectively, and $b=R'$) that
\begin{align*}
d_{\mathbf{B}_{\chi_n\widehat{\Omega}}((-1,'0),2R')}(p,q)&\leq\frac{d_{\chi_n\widehat{\Omega}}(p,q)}
{\tanh(R'-d_{\chi_n\widehat{\Omega}}(p,q))}\\
&\leq\frac{d_{\chi_n\widehat{\Omega}}(p,q)}{\tanh(R'-2C_K)}\leq\frac{1}{1-\epsilon}
d_{\chi_n\widehat{\Omega}}(p,q)
\end{align*}
for all $n>N$ large. Now Lemma~\ref{lem9} implies that
\[
\mathbf{B}_{\chi_n\widehat{\Omega}}((-1,'0),2R')\subset\widetilde{\Omega},
\]
uniformly for all $n$ large and the decreasing property of the Kobayashi distance indicates that
\[
d_{\widetilde{\Omega}}(p,q)\leq d_{\mathbf{B}_{\chi_n\widehat{\Omega}}((-1,'0),2R')}(p,q)\leq
\frac{1}{1-\epsilon}
d_{\chi_n\widehat{\Omega}}(p,q)
\]
and let $n\to\infty$, we have
\[
d_{\widetilde{\Omega}}(p,q)\leq\frac{1}{1-\epsilon} \liminf_{n\to\infty}
d_{\chi_n\widehat{\Omega}}(p,q).
\]
Combine Lemma~\ref{lem8} and the arbitrariness of $\epsilon$ we finish the proof.
\end{proof}

\subsection{Asymptotic behavior of the geodesics}
In order to show that all geodesics in $\widehat{\Omega}$ are well behaved, we need to get uniform estimates of the Kobayashi metrics on the scaled domains $\chi_n\widehat{\Omega}$.

\begin{lem}
For each $R>0$, there exist $c>0$ and $C>0$ such that for all $n$ large,
\[
M_{\rho_n}(z;X)\geq\frac{c\|X\|}{|\rho_n(z)|^{1/{(2m')}}},\quad z\in\chi_n\widehat{\Omega}\cap \mathbb{B}(0,R),\quad X\in\mathbb{C}^{d+1}
\]
and for any $o\in\widetilde{\Omega}$
\[
d_{\chi_n\widehat{\Omega}}(z,o)\leq C+A\ln\bigg(\frac{1}{|\rho_n(z)|}\bigg),\quad z\in \chi_n\widehat{\Omega}\cap \mathbb{B}(0,R).
\]
\end{lem}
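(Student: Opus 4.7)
The plan is to adapt the proof of Lemma~\ref{lem6} almost verbatim, substituting $\chi_n\widehat{\Omega}$ for $\psi_n(\Omega)$, $\rho_n$ for $r_n$, and $\widetilde{\Omega}$ for $\widehat{\Omega}$. All the ingredients needed for this substitution are now available: Lemma~\ref{lem11} transfers $M_{r_P}$ to $M_{\rho_n}$; combining Lemma~\ref{lem4} with the biholomorphism $\chi_n$ gives $A^{-1}M_{\rho_n}\leq F_{\chi_n\widehat{\Omega}}\leq AM_{\rho_n}$ with the same constant $A$; and Proposition~\ref{prop6} plays the role of Theorem~\ref{thm1}.

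For the metric bound I would first apply the elementary inequality from the proof of Lemma~\ref{lem6} with $y=x_0$, $a_1=2[P^{(n)}]'(z_1)$ and $a_\alpha=2\overline{z}_\alpha$ for $2\leq\alpha\leq d$. Since $z\in\mathbb{B}(0,R)$ and $P^{(n)}\to P_{2m'}$ uniformly on compact sets, the $a_k$ are uniformly bounded, and the task reduces to dominating $c|x_k|/|\rho_n(z)|^{1/(2m')}$ by each of $|x_0|/|\rho_n(z)|$, $|x_1||A_{2m'}^{\rho_n}(z_1)/\rho_n(z)|^{1/(2m')}$, and $|x_\alpha|/\sqrt{|\rho_n(z)|}$. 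The middle piece works because $P^{(n)}$ differs from $P_{2m'}$ only by terms of degree $<2m'$, so all derivatives of order $2m'$ of the error vanish; hence $A_{2m'}^{\rho_n}(z_1)$ is a strictly positive constant independent of $n$ and of $z_1$ (recall that $P_{2m'}$ is non-trivial and without harmonic terms). The first and third pieces work because $|\rho_n(z)|$ is uniformly bounded above on $\chi_n\widehat{\Omega}\cap\mathbb{B}(0,R)$ and $1/(2m')\leq 1/2\leq 1$.

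For the distance bound I would first check the analogue of Lemma~\ref{lem5} on $\chi_n\widehat{\Omega}$: repeating its proof with the comparison $F_{\chi_n\widehat{\Omega}}\simeq M_{\rho_n}$ shows that, for every $\tilde{x}\in\partial(\chi_n\widehat{\Omega})$, the curve $t\mapsto\tilde{x}-(e^{-t},'0)$ is a Kobayashi $(A,0)$-quasi-geodesic in $\chi_n\widehat{\Omega}$. Given $z\in\chi_n\widehat{\Omega}\cap\mathbb{B}(0,R)$, I set $\tilde{x}:=z+(|\rho_n(z)|,'0)\in\partial(\chi_n\widehat{\Omega})$ and $\sigma(t):=\tilde{x}-(e^{-t},'0)$. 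Then $\sigma(-\ln|\rho_n(z)|)=z$, $\rho_n(\sigma(0))=-1$, and therefore
\[
d_{\chi_n\widehat{\Omega}}(z,\sigma(0))\leq A\ln\frac{1}{|\rho_n(z)|}.
\]
The main obstacle, and the only step requiring genuine additional input, is to bound $d_{\chi_n\widehat{\Omega}}(\sigma(0),o)$ uniformly in $z$ and $n$. Here I would argue that boundedness of $z$ on $\mathbb{B}(0,R)$ together with the uniform boundedness of $P^{(n)}$ on compact sets forces the family $\{\sigma(0)\}$ (over all admissible $z$ and $n$) to be relatively compact in $\mathbb{C}^{d+1}$; any accumulation point $p$ satisfies $\rho(p)=-1<0$, so lies in $\widetilde{\Omega}$. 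Its closure $\overline{K}$ is then a compact subset of $\widetilde{\Omega}$, and the uniform convergence in Proposition~\ref{prop6} on $\overline{K}\cup\{o\}$ yields a constant $C_1$ with $d_{\chi_n\widehat{\Omega}}(\sigma(0),o)\leq C_1$ for all $n$ large. Setting $C:=C_1$ and applying the triangle inequality completes the argument.
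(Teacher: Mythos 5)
Your proposal is correct and follows essentially the same route as the paper: uniform bounds on $|\rho_n|$ and on the coefficients $2[P^{(n)}]'(z_1)$, $2\overline{z}_\alpha$ over $\mathbb{B}(0,R)$, a uniform positive lower bound on the mixed $2m'$-th order derivatives coming from the top-degree part of $P^{(n)}$, and the elementary inequality give the metric estimate, while the distance estimate uses a curve in the $z_0$-direction to a fixed level set lying in a compact subset of $\widetilde{\Omega}$ together with the stability of the distances (Lemma~\ref{lem8}/Proposition~\ref{prop6}). The only deviations are cosmetic: you normalize to the level set $\{\rho_n=-1\}$ through a boundary point rather than to the level $\rho_n(o)$ as in the paper, and you use only the $\ell=2m'$ term instead of a lower bound on the full sum $\sum_\ell(A^{\rho_n}_\ell)^{1/\ell}$ (just note, when forming the closure of your family of points $\sigma(0)$, that accumulation points at a fixed large $n$ satisfy $\rho_n=-1$ rather than $\rho=-1$, which still places them in $\widetilde{\Omega}$ by the uniform closeness of $\rho_n$ to $\rho$ on bounded sets).
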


\begin{proof}
For each $R>0$, as $\rho_n\to\rho$ uniformly there exists $C_*>0$ such that $|\rho_n(z)|$, $|\rho(z)|\leq C_*$ for all $n$ large and for all $z\in\mathbb{B}(0,R)$. Thus, for every $\ell\in\{2,\ldots,2m'\}$,
\[
|\rho_n(z)|^{\frac{1}{\ell}}\leq C_*^{\frac{1}{\ell}-\frac{1}{2m'}}|\rho_n(z)|^{\frac{1}{2m'}}.
\]
Moreover, there exists $B>0$ such that $\sum^{2m'}_{\ell=2}(A^{\rho}_\ell(z_1))^{1/\ell}\geq 2B$ for all $z\in\mathbb{B}(0,R)$, and as
\[
\sum^{2m'}_{\ell=2}(A^{\rho_n}_\ell(z_1))^{1/\ell}\to\sum^{2m'}_{\ell=2}(A^{\rho}_\ell(z_1))^{1/\ell},
\]
it follows that $\sum^{2m'}_{\ell=2}(A^{\rho_n}_\ell(z_1))^{1/\ell}\geq B$ for all $z\in\mathbb{B}(0,R)$ and for all $n$ large. By the same argument we see that there exists $D>0$ such that
\[
\big|2[P^{(n)}(z_1)]'\big|\leq D, \quad \big|2z_2\big|,\ldots,\big|2z_d\big|\leq D
\]
for all $n$ large and $z=(z_0,z_1,\ldots,z_d)\in\mathbb{B}(0,R)$.

Now consider the inequality ($x_k,y,a_k\in\mathbb{C}$, and $\sum^d_{k=1}|a_k|\leq |a|$)
\begin{align*}
\bigg|y+\sum^d_{k=1}a_kx_k\bigg|+\sum^d_{k=1}|x_k|&\geq\frac{1}{1+\sum^d_{k=1}|a_k|}
\bigg(|y|-\sum^d_{k=1}|a_k||x_k|\bigg)+
\sum^d_{k=1}\frac{1+|a_k|}{1+|a|}|x_k|\\
&\geq\frac{1}{1+|a|}\bigg(\sum^d_{k=1}|x_k|+|y|\bigg)
\end{align*}
we have
\begin{align*}
&M_{\rho_n}(p;X)\\
=&\frac{|x_0+2x_1[P^{(n)}]'(z_1)+2\sum^d_{\alpha=2}x_\alpha\overline{z}_\alpha|}{|\rho_n(z)|}+
|x_1|\sum^{2m'}_{\ell=2}\bigg(\frac{A^{\rho_n}_\ell(z_1)}{|\rho_n(z)|}\bigg)^{1/\ell}+\sum^d_{\alpha=2}\frac{|x_\alpha|}
{\sqrt{|\rho_n(z)|}}\\
\geq&\frac{1}{|\rho_n(p)|^{\frac{1}{2m'}}}\bigg(C_*^{\frac{1}{2m'}-1}
\bigg|x_0+2x_1[P^{(n)}]'(z_1)+2\sum^d_{\alpha=2}x_\alpha\overline{z}_\alpha\bigg|
+C_*^{\frac{1}{2m'}}B|x_1|+C_*^{\frac{1}{2m'}-\frac{1}{2}}\sum^d_{k=2}|x_k|\bigg)\\
\geq&\frac{C^*}{|\rho_n(p)|^{\frac{1}{2m'}}}\frac{1}{1+dD}\big(|x_0|+\sum^d_{k=1}|x_k|\big)
\end{align*}
where $C^*:=\min\{C_*^{\frac{1}{2m'}-1},C_*^{\frac{1}{2m'}-\frac{1}{2}},C_*^{\frac{1}{2m'}}B\}$.

For any fixed $R>0$, by compactness there exists $C_R>0$ such that
\[
d_{\widetilde{\Omega}}(x,o)\leq 2C_R,\quad\text{for all}\quad x\in\mathbb{B}(0,R),\quad \rho(x)=\rho(o)
\]
and as $d_{\chi_n\widehat{\Omega}}$ converges uniformly on compact subsets of $\widetilde{\Omega}\times\widetilde{\Omega}$ to $d_{\widetilde{\Omega}}$, we can suppose that
\[
d_{\chi_n\widehat{\Omega}}(x,o)\leq C_R,\quad\text{for all}\quad x\in\mathbb{B}(0,R),\quad\rho_n(x)=\rho_n(o)
\]
for all $n$ large. Next for each $p\in\chi_n\widehat{\Omega}\cap\mathbb{B}(0,R)$, let $x=p+(\lambda,0')\in\chi_n\widehat{\Omega}$ with $\lambda\in\mathbb{R}$ such that $\rho_n(x)=\rho_n(o)$. A direction computation shows that
\[
d_{\chi_n\widehat{\Omega}}(x,p)\leq A\bigg|\ln\bigg(\frac{\rho_n(x)}{\rho_n(p)}\bigg)\bigg|=A
\bigg|\ln\bigg(\frac{\rho_n(o)}{\rho_n(p)}\bigg)\bigg|.
\]
Now for each $n\in\mathbb{N}$, there exists $A_R>0$ such that for $p\in\chi_n\widehat{\Omega}\cap\mathbb{B}(0,R)$, $o$, the corresponding $x$ are contained in $\mathbb{B}(0,A_R)$. It follows that
\begin{align*}
d_{\chi_n\widehat{\Omega}}(p,o)&\leq d_{\chi_n\widehat{\Omega}}(p,x)+d_{\chi_n\widehat{\Omega}}(x,o)\\
&\leq A\bigg|\ln\bigg(\frac{\rho_n(o)}{\rho_n(p)}\bigg)\bigg|+C_R\\
&\leq A\ln\bigg(\frac{1}{|\rho_n(p)|}\bigg)+A|\ln A_R|+C_R
\end{align*}
and the proof is complete if we denote by $C:=A|\ln A_R|+C_R$.
\end{proof}

\begin{prop}\label{prop5}
Let $\sigma_n\colon[a_n,b_n]\to\widehat{\Omega}$ be a sequence of Kobayashi $(A,0)$ quasi geodesic. Define $\tilde{\sigma}_n:=\chi_n\circ\sigma_n$. Suppose that there exists $R>0$ such that
\begin{enumerate}
\item[$\mathrm{(i)}$] $|b_n-a_n|\to\infty$;
\item[$\mathrm{(ii)}$] $\tilde{\sigma}_n([a_n,b_n])\subset \mathbb{B}(0,R)$;
\item[$\mathrm{(iii)}$] $\lim_{n\to\infty}\|\tilde{\sigma}_n(a_n)-\tilde{\sigma}_n(b_n)\|>0$,
\end{enumerate}
then, after a subsequence, there is a $T_n\in[a_n,b_n]$ such that the sequence of $t\mapsto\tilde{\sigma}_n(t+T_n)$ converges uniformly on compact set to an $(A,0)$ quasi geodesic $\tilde{\sigma}\colon\mathbb{R}\to\widetilde{\Omega}$.
\end{prop}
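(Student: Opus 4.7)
The plan is to transport the argument of Proposition~\ref{prop4} to the new scaling $\chi_n$. Remark~\ref{remark3} already isolated the three ingredients needed for that proof: stability of the Kobayashi distance, the uniform metric/distance estimates, and the existence of $(A,0)$ quasi-geodesic lines pointing into the boundary along $L_0$. For the present setting, stability is Proposition~\ref{prop6}, the uniform estimates are furnished by the lemma immediately preceding Proposition~\ref{prop5}, and the quasi-geodesic lines of Lemma~\ref{lem5} carry over to $\widetilde{\Omega}$ verbatim because the defining function of $\widetilde{\Omega}$ has the same structural form as $r_P$ (this is what feeds the $\re(z_0)$-type integration in \eqref{eq7}). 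So once these substitutions are made, the argument is mechanical.

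More concretely, after passing to a subsequence I may assume $\|\tilde{\sigma}_n(a_n)-\tilde{\sigma}_n(b_n)\|\to\epsilon>0$, and then choose $T_n\in[a_n,b_n]$ so that
\[
\lim_{n\to\infty}\|\tilde{\sigma}_n(a_n)-\tilde{\sigma}_n(T_n)\|=\lim_{n\to\infty}\|\tilde{\sigma}_n(T_n)-\tilde{\sigma}_n(b_n)\|=\epsilon/2.
\]
By compactness, $\tilde{\sigma}_n(T_n)\to y\in\overline{\widetilde{\Omega}\cap\mathbb{B}(0,R)}$. The Lipschitz estimate on quasi-geodesics (the direct analog of Corollary~\ref{cor1}, which rests on the uniform metric estimate $M_{\rho_n}(z;X)\ge c\|X\|/|\rho_n(z)|^{1/(2m')}$) lets me apply Arzel\`a–Ascoli. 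Combined with the uniform convergence of $d_{\chi_n\widehat{\Omega}}\to d_{\widetilde{\Omega}}$ on compacta (Proposition~\ref{prop6}), this yields, when $y\in\widetilde{\Omega}$, a subsequential limit $\tilde{\sigma}\colon\mathbb{R}\to\widetilde{\Omega}$ which inherits the $(A,0)$ quasi-geodesic inequalities.

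The nontrivial part is excluding $y\in\partial\widetilde{\Omega}\cup\{\infty\}$. As in the proof of Proposition~\ref{prop4}, I would reparametrize so that $0\in[T_n,b_n]$ and $|\rho_n(\tilde{\sigma}_n(0))|=\max_{t\in[T_n,b_n]}|\rho_n(\tilde{\sigma}_n(t))|\to 0$. The metric lower bound gives $\|\tilde{\sigma}_n'(t)\|\lesssim|\rho_n(\tilde{\sigma}_n(t))|^{1/(2m')}$ (with $2m'$ replacing $2m$), and integrating on any compact interval shows that any uniform limit $\hat\sigma$ of $\tilde{\sigma}_n|_{[T_n,b_n]}$ must be constant. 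On the other hand, the distance upper bound $d_{\chi_n\widehat{\Omega}}(z,o)\le C+A\ln(1/|\rho_n(z)|)$ combined with $d_{\chi_n\widehat{\Omega}}(\tilde{\sigma}_n(0),\tilde{\sigma}_n(t))\ge A^{-1}|t|$ yields $|\rho_n(\tilde{\sigma}_n(t))|\le e^{C/A-|t|/(2A^2)}$, and then the triangle inequality applied to $\|\tilde{\sigma}_n(T')-\tilde{\sigma}_n(b')\|$ for $-T',b'$ large forces $\hat\sigma$ to span a distance $\ge\epsilon/2$, hence nonconstant. This contradiction rules out $y\in\partial\widetilde{\Omega}$, and the $y=\infty$ case is handled identically since the metric/distance estimates hold uniformly on $\mathbb{B}(0,R)$.

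The main obstacle, and the reason this proposition could not be obtained directly from Proposition~\ref{prop4}, is precisely the stability statement at infinity: Theorem~\ref{thm4} and Theorem~\ref{thm1} relied on the engulfing property of polydiscs near a finite boundary point and on the bounded-domain estimates of \cite{verma2015corankone}, neither of which is available for $\chi_n\widehat{\Omega}$ with $\chi_n(\infty)=\infty$. Thus the real work for this proposition has already been done in establishing Proposition~\ref{prop3} (through the auxiliary taut bumping domain $D'$) and Proposition~\ref{prop6} (through Lemma~\ref{lem9} and the localization Lemma~\ref{lem1}); once these are in hand, the proof of Proposition~\ref{prop5} is a faithful transcription of that of Proposition~\ref{prop4}.
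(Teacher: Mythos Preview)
Your proposal is correct and follows exactly the approach the paper takes: the paper's proof simply says ``By the previous lemma and the regularity of the quasi-geodesics (Corollary~\ref{cor1}), the proof is the same as Proposition~\ref{prop4} and we do not repeat it here.'' Your writeup makes the substitutions explicit and correctly identifies Proposition~\ref{prop6} and the preceding uniform estimate lemma as the replacements for Theorem~\ref{thm1} and Lemma~\ref{lem6}; the only superfluous remark is the $y=\infty$ case, which cannot occur since hypothesis~(ii) forces $y\in\overline{\mathbb{B}(0,R)}$.
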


\begin{proof}
By the previous lemma and the regularity of the quasi-geodesics (Corollary~\ref{cor1}), the proof is the same as Proposition~\ref{prop4} and we do not repeat it here.
\end{proof}

The following lemma is very similar to \cite[Proposition~12.2]{zimmer2016convexmathann}:

\begin{lem}
There exists no Catlin geodesic $\sigma\colon\mathbb{R}\to\widehat{\Omega}$ such that
\begin{equation}\label{eq6}
\lim_{t\to\infty}\sigma(t)=\lim_{t\to-\infty}\sigma(t)=\infty.
\end{equation}
\end{lem}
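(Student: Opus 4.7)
I argue by contradiction. Suppose $\sigma\colon\mathbb{R}\to\widehat{\Omega}$ is a Catlin geodesic with $\lim_{t\to\pm\infty}\sigma(t)=\infty$. The strategy is to apply the dilations $\chi_n$ of Section~\ref{section7} to zoom out on $\sigma$, extract via Proposition~\ref{prop5} an $(A,0)$-Kobayashi quasi-geodesic line $\tilde{\sigma}\colon\mathbb{R}\to\widetilde{\Omega}$ whose Euclidean image is bounded, and then contradict the analog of Lemma~\ref{lem10} applied to $\widetilde{\Omega}$.

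Since $\chi_n$ is a linear biholomorphism of $\mathbb{C}^{d+1}$ and Kobayashi metrics are biholomorphism-invariant, $\chi_n\circ\sigma$ is again an $(A,0)$-Kobayashi quasi-geodesic of $\chi_n\widehat{\Omega}$. For each fixed $z\in\widehat{\Omega}$ one has $\chi_n z\to 0$, while for each fixed $n$, $\|\chi_n\sigma(t)\|\to\infty$ as $|t|\to\infty$ because $\sigma(t)\to\infty$. Fix $R>0$; for $n$ large enough that $\chi_n\sigma(0)\in\mathbb{B}(0,R/2)$, define
\[
a_n:=\sup\{t\le 0 : \|\chi_n\sigma(t)\|=R\},\qquad b_n:=\inf\{t\ge 0 : \|\chi_n\sigma(t)\|=R\}.
\]
Then $\chi_n\sigma([a_n,b_n])\subset\overline{\mathbb{B}}(0,R)$ establishes condition (ii) of Proposition~\ref{prop5}. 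For any fixed $T>0$, $\chi_n\sigma([-T,T])$ lies eventually in $\mathbb{B}(0,R/2)$, so $|b_n-a_n|\to\infty$, giving condition (i). The exit points $p_n:=\chi_n\sigma(a_n)$ and $q_n:=\chi_n\sigma(b_n)$ lie on $\partial\mathbb{B}(0,R)$; after a subsequence $p_n\to p^{\ast}$, $q_n\to q^{\ast}$, and condition (iii) demands $p^{\ast}\ne q^{\ast}$. Securing this separation is the step I expect to require the most care: if it fails for the chosen $R$, I would vary $R$ (or replace the Euclidean ball by another bounded body), exploiting that the coincidence cannot be stable in $R$ because otherwise $\chi_n\sigma$ would be forced to exit every sphere through a single fixed point, in contradiction with $\chi_n\sigma(0)\to 0$.

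With (iii) in hand Proposition~\ref{prop5} furnishes, after a further subsequence and reparametrization by some $T_n\in[a_n,b_n]$, an $(A,0)$-Kobayashi quasi-geodesic line $\tilde{\sigma}\colon\mathbb{R}\to\widetilde{\Omega}$ with $\tilde{\sigma}(\mathbb{R})\subset\overline{\mathbb{B}}(0,R)$. The arguments of Sections~\ref{section6} and~\ref{section8} transfer verbatim to $\widetilde{\Omega}$ in place of $\widehat{\Omega}$: Lemma~\ref{lem13} provides the Catlin-Kobayashi comparison, Lemma~\ref{lem12} the explicit family of Catlin geodesics $x-(ae^{-t},'0)$ for $x\in\partial\widetilde{\Omega}$, and Proposition~\ref{prop6} the stability of the Kobayashi distance under the dilations $\chi_{\lambda}$. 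Hence Lemma~\ref{lem10} applies to $\widetilde{\Omega}$: both $\tilde{\sigma}(\pm\infty)$ exist in $\overline{\mathbb{C}^{d+1}}$, both are finite (because $\tilde{\sigma}$ is Euclidean-bounded) and hence distinct, and both must lie in $\partial\widetilde{\Omega}$ since $\tilde{\sigma}$ has infinite Kobayashi length while sitting inside a compact Euclidean ball.

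The desired contradiction is then extracted by tracing these boundary endpoints of $\tilde{\sigma}$ back, via the reparametrization by $T_n$, to sub-sequential limits of $p_n$ and $q_n$; the scaling structure forces them to encode the single ``point at infinity'' $\infty\in\overline{\mathbb{C}^{d+1}}$ toward which both ends of $\sigma$ tend, which is incompatible with the distinctness just established. This identification and the cleanest articulation of the resulting incompatibility is the technically hardest step of the proof.
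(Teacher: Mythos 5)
Your overall strategy (rescale by the dilations $\chi_n$, extract a limit quasi-geodesic in $\widetilde{\Omega}$ via Proposition~\ref{prop5}, and derive a contradiction) is the same as the paper's, but the two steps you yourself flag as delicate are genuine gaps, and the paper's proof is structured precisely so that neither arises. First, your condition~(iii): you take the single interval $[a_n,b_n]$ on which $\chi_n\sigma$ stays in $\mathbb{B}(0,R)$, so both endpoints sit on the sphere $\partial\mathbb{B}(0,R)$, and you then need their limits $p^{\ast},q^{\ast}$ to be distinct. Nothing forces this: if the two ends of $\sigma$ escape to infinity ``in the same asymptotic direction'', the rescaled forward and backward exit points can converge to the same point of the sphere for every radius $R$, so varying $R$ does not rescue the argument (your heuristic that coincidence for all $R$ would force exit ``through a single fixed point'' is not correct -- the common exit point may depend on $R$). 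The paper avoids this entirely by applying Proposition~\ref{prop5} separately to the forward ray on $[0,b_n]$ and the backward ray on $[d_n,0]$, where one endpoint is $\tilde{\sigma}_n(0)=\chi_n\sigma(0)\to 0$ and the other has norm $2R$, so (iii) is automatic.

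Second, your concluding contradiction is not a valid argument: a locally bounded quasi-geodesic line $\tilde{\sigma}$ in $\widetilde{\Omega}$ with two \emph{distinct} finite boundary endpoints is in no tension with the fact that both ends of $\sigma$ tend to $\infty$ in the original coordinates -- two sequences diverging to $\infty$ in $\widehat{\Omega}$ can perfectly well rescale to two different finite limit points under the anisotropic dilations $\chi_n$, so the endpoints do not ``encode'' a single point. (Your appeal to an analogue of Lemma~\ref{lem10} on $\widetilde{\Omega}$ would also require redoing the Gromov-product estimates of Section~\ref{section8} for $M_\rho$, which you do not do, though that part is plausibly routine.) The paper's contradiction is quantitative instead: from the two rays it gets reparametrization times $T_n'\to+\infty$ and $T_n''\to-\infty$ with $\tilde{\sigma}_n(T_n')\to\hat{\sigma}_1(0)$ and $\tilde{\sigma}_n(T_n'')\to\hat{\sigma}_2(0)$ in $\widetilde{\Omega}$, and then the stability of the Kobayashi distance under $\chi_n$ (Proposition~\ref{prop6}) together with the quasi-geodesic lower bound gives
\[
d_{\widetilde{\Omega}}(\hat{\sigma}_1(0),\hat{\sigma}_2(0))
=\lim_{n\to\infty}d_{\widehat{\Omega}}(\sigma(T_n'),\sigma(T_n''))
\geq\lim_{n\to\infty}A^{-1}|T_n'-T_n''|=\infty,
\]
which is the actual contradiction. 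Without a replacement for this distance argument, and without a proof of the endpoint separation, your proposal does not close.
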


\begin{proof}
Assume for a contradiction that there is a Catlin geodesic line $\sigma\colon\mathbb{R}\to\widehat{\Omega}$ such that
\[
\lim_{t\to\infty}\sigma(t)=\lim_{t\to-\infty}\sigma(t)=\infty.
\]
Then Lemma~\ref{lem13} implies that $\sigma$ is a Kobayashi $(A,0)$ quasi geodesic. Now consider the $(A,0)$ quasi geodesics $\tilde{\sigma}_n:=\chi_n\circ\sigma\colon\mathbb{R}\to\chi_n\widehat{\Omega}$. Let $R>0$ be a constant such that $R>2\|\sigma(0)\|$. For each $n\in\mathbb{N}$, since
\[
\lim_{t\to\infty}\tilde{\sigma}_n(t)=\lim_{t\to-\infty}\tilde{\sigma}_n(t)=\infty,
\]
and $\|\tilde{\sigma}_n(0)\|<\|\sigma(0)\|$, we can chose $b_n\in(0,\infty)$ such that $\|\sigma_n(b_n)\|=2R$. As $\lim_{n\to\infty}\tilde{\sigma}_n(t_0)=0\in\partial\widetilde{\Omega}$ for fixed $t_0\in\mathbb{R}$ we must have $b_n\to\infty$. Now the hypotheses of Proposition~\ref{prop5} are satisfied (on the interval $[0,b_n]$), and thus there exists $T'_n\in[0,b_n]$ such that $t\mapsto\tilde{\sigma}_n(t+T_n')$ converges locally uniformly to an $(A,0)$ quasi geodesic $\hat{\sigma}_1\colon\mathbb{R}\to\widetilde{\Omega}$. Similarly, by considering $(-\infty,0]$, there exists $T_n''\in[d_n,0]$ such that $t\mapsto\tilde{\sigma}_n(t+T_n'')$ converges to a geodesic $\hat{\sigma}_2\colon\mathbb{R}\to\widetilde{\Omega}$.

Since $\tilde{\sigma}_n(0)\to0$ and $0\in\partial\widetilde{\Omega}$, we must have $T_n'$ and $T_n''$ converges to $\infty$ and $-\infty$, respectively. Thus Proposition~\ref{prop6} indicates that
\begin{align*}
d_{\widetilde{\Omega}}(\hat{\sigma}_1(0),\hat{\sigma}_2(0))&=\lim_{n\to\infty}d_{\chi_n\widehat{\Omega}}
(\tilde{\sigma}_n(T_n'),\tilde{\sigma}_n(T_n''))\\
&=\lim_{n\to\infty}d_{\widehat{\Omega}}(\sigma(T_n'),\sigma(T_n''))\geq\lim_{n\to\infty}A^{-1}|T_n'-T_n''|=\infty
\end{align*}
which is a contradiction and the proof is complete.
\end{proof}

\begin{prop}
The Catlin geodesics in $\widehat{\Omega}$ are well behaved, i.e., if $\sigma\colon\mathbb{R}\to\widehat{\Omega}$ is a Catlin geodesic, then both
\[
\lim_{t\to-\infty}\sigma(t)\quad\text{and}\quad\lim_{t\to\infty}\sigma(t)
\]
exist in $\overline{\mathbb{C}^{d+1}}$ and they are distinct.
\end{prop}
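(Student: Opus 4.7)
The plan is to combine the two immediately preceding results. By Lemma~\ref{lem10}, for any Catlin geodesic $\sigma\colon\mathbb{R}\to\widehat{\Omega}$, both limits $\lim_{t\to-\infty}\sigma(t)$ and $\lim_{t\to+\infty}\sigma(t)$ already exist in $\overline{\mathbb{C}^{d+1}}$, and moreover they must be distinct whenever at least one of them is a finite point of $\overline{\widehat{\Omega}}$. Therefore existence is automatic, and distinctness only needs to be verified in the remaining case where both limits equal $\infty$.

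To rule out that case, I would invoke the previous lemma (the one established via the dilation $\chi_n$ and Proposition~\ref{prop5}), which states that no Catlin geodesic can have both endpoint limits equal to $\infty$. Hence the scenario $\lim_{t\to-\infty}\sigma(t)=\lim_{t\to+\infty}\sigma(t)=\infty$ cannot occur, so at least one of the two endpoint limits is finite. Lemma~\ref{lem10} then guarantees that the two limits are distinct.

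Combining these two observations yields the proposition in a couple of lines: existence is from Lemma~\ref{lem10}; the only way the two limits could coincide would be if both were $\infty$, which is exactly what the previous lemma forbids. The main substance has already been absorbed into those two lemmas --- in particular, the hard work of the rescaling at infinity via $\chi_n$, the stability results (Proposition~\ref{prop3} and Proposition~\ref{prop6}), the uniform Kobayashi metric estimates on $\chi_n\widehat{\Omega}\cap\mathbb{B}(0,R)$, and the extraction argument Proposition~\ref{prop5} --- so there is no further obstacle. The only care needed in writing up is to apply the preceding lemma with the correct orientation of $\sigma$ (it holds for both $t\to\pm\infty$), and to note that a Catlin geodesic is in particular a Kobayashi $(A,0)$ quasi-geodesic by Lemma~\ref{lem13}, so that the hypotheses of all cited results are met.
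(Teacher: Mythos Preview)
Your proposal is correct and follows essentially the same two-line argument as the paper: existence and distinctness-when-one-limit-is-finite from Lemma~\ref{lem10}, and exclusion of the both-limits-at-$\infty$ case from the immediately preceding lemma. One minor slip: the comparison making a Catlin geodesic in $\widehat{\Omega}$ a Kobayashi $(A,0)$ quasi-geodesic is Lemma~\ref{lem4}, not Lemma~\ref{lem13} (which concerns $\widetilde{\Omega}$), though this observation is not actually needed here since both cited results apply directly to Catlin geodesics.
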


\begin{proof}
By Lemma~\ref{lem10}, we know that the forward and backward limits of a Catlin geodesic exist, and the limits are distinct if at least one is not $\infty$. Now the previous lemmas shows that a Catlin geodesic can not have $\infty$ as a backward and forward limit and the proof is complete.
\end{proof}

By the following corollary, $(A,B)$ quasi geodesics are also well behaved  \cite[Corollary~5.13]{Fiacchi2020Gromov} (cf. \cite[Chapter 5]{Ghys1990Gromov})
\begin{cor}
Let $\sigma\colon\mathbb{R}\to\widehat{\Omega}$ be an $(A,B)$ quasi-geodesic line respect to the Catlin metric. Then both
\[
\lim_{t\to-\infty}\sigma(t)\qquad\lim_{t\to\infty}\sigma(t)
\]
exist in $\mathbb{C}^{d+1}$ and they are different.
\end{cor}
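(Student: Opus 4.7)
The plan is to reprise the argument of the preceding proposition, replacing Catlin geodesics throughout by Catlin $(A,B)$ quasi-geodesics. The key observation is that each of the preparatory tools used in the proof of the proposition admits a direct $(A,B)$-generalization in which the additive constant $B$ is absorbed harmlessly in every asymptotic estimate. Since the preceding proposition already establishes well-behavedness for Catlin geodesics (i.e.\ the $(1,0)$ case), extending to arbitrary $(A,B)$ mostly amounts to rerunning the same arguments with bookkeeping for $B$.

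First, I would verify that the Lipschitz regularity (the analog of Corollary~\ref{cor1}) still applies to $(A,B)$ Catlin quasi-geodesics contained in a fixed Euclidean ball: the essential ingredients are the pointwise bound $M_{\rho_n}(\sigma(t);\sigma'(t))\leq A$ at a.e.\ speed-1 time and the uniform lower estimate $M_{\rho_n}(z;X)\geq c\|X\|/|\rho_n(z)|^{1/(2m')}$ of Section~\ref{section7}, both of which are independent of $B$. Next, Proposition~\ref{prop5} and the unnamed lemma ruling out $\infty$ as simultaneous forward and backward limit of a Catlin geodesic both carry over to $(A,B)$ quasi-geodesics: every appearance of the lower bound $d^C_{\widehat{\Omega}}(\sigma(s),\sigma(t))\geq A^{-1}|s-t|$ is replaced by $A^{-1}|s-t|-B$, but since the argument uses $|b_n-a_n|\to\infty$ (and consequently $|T'_n-T''_n|\to\infty$), the additive $-B$ does not change the blow-up of $d^C_{\widetilde{\Omega}}(\hat{\sigma}_1(0),\hat{\sigma}_2(0))=\infty$ that produces the desired contradiction.

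With the generalized scaling-at-infinity machinery in place, existence of the forward and backward limits in $\overline{\mathbb{C}^{d+1}}$ follows from the $(A,B)$-analog of Lemma~\ref{lem10}, which in turn rests on Proposition~\ref{prop2} and Corollary~\ref{cor2}. The Gromov-product arguments of Section~\ref{section8} rely on the explicit Catlin geodesics $\sigma(t)=x-(ae^{-t},0,\ldots,0)$ of Lemma~\ref{lem12} together with triangle inequalities for $d^C_{\widehat{\Omega}}$; substituting an $(A,B)$ quasi-geodesic for the genuine geodesic in the opposing position in these inequalities introduces only bounded additive perturbations of the form $O(B)$, which do not affect the statement that $\lim_{n\to\infty}(p_n|q_n)_o^C=\infty$ forces $\xi^+=\xi^-$ or that $\liminf(p_n|q_m)_o^C<\infty$ forces $\xi^+\neq\xi^-$.

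The main obstacle is precisely this propagation of the additive $B$-terms through the chain of triangle inequalities in Proposition~\ref{prop2}: each triangle step picks up an $O(B)$ error, and one must verify that the Gromov-product lower bound of the form $\lim_{n\to\infty}(p_n|q_n)_o^C\geq T-R$ still diverges as $T\to\infty$ after $R$ is replaced by $R+O(B)$. This is a bookkeeping matter rather than a genuine new idea, since $B$ is a fixed constant while $T$ is arbitrary, and the qualitative conclusion that $(A,B)$ quasi-geodesic lines in $\widehat{\Omega}$ have well-defined and distinct forward and backward limits in $\overline{\mathbb{C}^{d+1}}$ goes through as in the geodesic case.
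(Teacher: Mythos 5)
Your plan rests on the claim that the whole Section~\ref{section6}--\ref{section8} machinery survives with the additive constant $B$ "absorbed harmlessly", but it breaks at the very first step you list. For $B>0$ an $(A,B)$ quasi-geodesic need not be continuous, and even a continuous one can have infinite Euclidean length on compact intervals: the upper inequality $d^C_{\widehat{\Omega}}(\sigma(t),\sigma(s))\leq A|t-s|+B$ gives no information as $|t-s|\to0$. The proof of Corollary~\ref{cor1} (and of its analogue in Section~\ref{section7}) obtains the $L$-Lipschitz property and the a.e.\ bound $M_{r_n}(\sigma(t);\sigma'(t))\leq A$ precisely by dividing the $B=0$ inequality by $|t-s|$ and letting $|t-s|\to0$, so these ingredients are emphatically not "independent of $B$"; without them the Arzel\`a--Ascoli step and the length integrals in Claims 1 and 2 of Proposition~\ref{prop4}, as well as Proposition~\ref{prop5}, are unavailable. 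This is exactly what the paper's citation to \cite[Chapter~5]{Ghys1990Gromov} and \cite[Corollary~5.13]{Fiacchi2020Gromov} is for: one first replaces $\sigma$ by a continuous, piecewise-geodesic (``tamed'') quasi-geodesic at bounded Hausdorff distance in $d^C_{\widehat{\Omega}}$, runs the argument for that curve, and then transfers the limits back to $\sigma$, using estimates such as \eqref{eq7} and Lemma~\ref{lem6} to see that curves at bounded Catlin distance from each other have the same boundary limits. None of this appears in your outline.

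The second genuine gap is your assertion that the Gromov-product arguments only pick up $O(B)$ perturbations. The damage is multiplicative in $A$, not additive in $B$: Lemma~\ref{lem10} uses the exact identities $(\sigma(t_k)|\sigma(s_k))^C_{\sigma(0)}=\min\{t_k,s_k\}$ and $(\sigma(-t)|\sigma(t))^C_{\sigma(0)}=0$, valid only for genuine geodesics. For an $(A,B)$ quasi-geodesic with $A>1$ one only gets
\[
(\sigma(t_k)|\sigma(s_k))^C_{\sigma(0)}\geq\frac{1}{2}\big(A^{-1}(t_k+s_k)-A|t_k-s_k|\big)-\frac{3}{2}B,
\]
which need not tend to $+\infty$ (take $s_k=2t_k$ and $A$ moderately large), and
\[
(\sigma(-t)|\sigma(t))^C_{\sigma(0)}\leq(A-A^{-1})\,t+\frac{3}{2}B,
\]
which is unbounded in $t$. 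Hence neither Corollary~\ref{cor2} (used for the existence of the two limits) nor Proposition~\ref{prop2} (used for their distinctness when one limit is finite) can be invoked for quasi-geodesics by bookkeeping; this is precisely where the real content of the corollary lies and why the paper defers to Fiacchi's Corollary~5.13 rather than rerunning Lemma~\ref{lem10}. As written, the Gromov-product portion of your argument works only when $A=1$ and $B=0$, i.e.\ for the Catlin geodesics already treated by the preceding proposition, so the proposal does not establish the stated corollary.
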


As Kobayashi geodesics are Catlin $(A,0)$ quasi geodesics on $\widehat{\Omega}$, it follows that

\begin{cor}\label{prop7}
The Kobayashi geodesics in $\widehat{\Omega}$ are well behaved, i.e., if $\sigma\colon\mathbb{R}\to\widehat{\Omega}$ is a Kobayashi geodesic, then both
\[
\lim_{t\to-\infty}\sigma(t)\quad\text{and}\quad\lim_{t\to\infty}\sigma(t)
\]
exist in $\overline{\mathbb{C}^{d+1}}$ and they are distinct.
\end{cor}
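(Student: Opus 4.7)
The plan is to reduce this corollary to the immediately preceding one about Catlin quasi-geodesics. The key observation is that Lemma~\ref{lem4} gives a uniform constant $A\geq1$ with
\[
A^{-1}M_{r_P}(z;X)\leq F_{\widehat{\Omega}}(z;X)\leq A M_{r_P}(z;X),\quad (z,X)\in\widehat{\Omega}\times\mathbb{C}^{d+1},
\]
so integrating along paths and taking infima yields the corresponding comparison of distances
\[
A^{-1}d_{\widehat{\Omega}}^C(p,q)\leq d_{\widehat{\Omega}}(p,q)\leq A d_{\widehat{\Omega}}^C(p,q)\qquad p,q\in\widehat{\Omega}.
\]

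First I would verify that any Kobayashi geodesic $\sigma\colon\mathbb{R}\to\widehat{\Omega}$ is an $(A,0)$ quasi-geodesic with respect to the Catlin distance $d_{\widehat{\Omega}}^C$. Indeed, since $d_{\widehat{\Omega}}(\sigma(t_1),\sigma(t_2))=|t_1-t_2|$, the two-sided comparison above immediately gives
\[
A^{-1}|t_1-t_2|\leq d_{\widehat{\Omega}}^C(\sigma(t_1),\sigma(t_2))\leq A|t_1-t_2|
\]
for all $t_1,t_2\in\mathbb{R}$, which is exactly the defining inequality for an $(A,0)$ (and hence $(A,B)$ with $B=0$) Catlin quasi-geodesic.

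Then I would simply invoke the previous corollary, which asserts that every $(A,B)$ Catlin quasi-geodesic line in $\widehat{\Omega}$ is well behaved, namely both limits $\lim_{t\to\pm\infty}\sigma(t)$ exist in $\overline{\mathbb{C}^{d+1}}$ and are distinct. Applied to $\sigma$, this gives the desired conclusion. There is no genuine obstacle here: the work has already been done in the Catlin-metric setting, where the scaling at infinity from Section~\ref{section7} together with Proposition~\ref{prop5} and the non-existence lemma ruling out geodesics with both endpoints at $\infty$ is available; Lemma~\ref{lem4} is precisely the bridge that transfers these results back to the Kobayashi geodesics. The only mild care needed is to remember that the one-sided existence of the limits in $\overline{\mathbb{C}^{d+1}}$ was established in Lemma~\ref{lem10} (using only Catlin-metric Gromov-product arguments, so no further input is required), and that distinctness in the remaining case (both limits equal to $\infty$) is exactly what the scaling-at-infinity lemma excludes.
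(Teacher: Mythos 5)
Your argument is correct and follows exactly the route the paper takes: the comparability $A^{-1}M_{r_P}\leq F_{\widehat{\Omega}}\leq AM_{r_P}$ from Lemma~\ref{lem4} makes any Kobayashi geodesic a Catlin $(A,0)$ quasi-geodesic, and the preceding corollary on Catlin $(A,B)$ quasi-geodesic lines then gives existence and distinctness of the two limits. The paper compresses this into a single sentence, but the content is the same as your proposal.
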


\section{Proof of Theorem~\ref{thm0}}

In this section we shall give the proof of the main theorem. As bounded pseudo-convex domains of finite type are always \textit{Goldilocks} domains \cite[Definition~1.1]{zimmer2017goldilocks}, we have the following visibility  theorem~{\cite[Theorem~1.4.]{zimmer2017goldilocks}}:

\begin{thm}\label{thm6}
Let $\Omega\subset\mathbb{C}^{d+1}$ be a smoothly bounded pseudo-convex domain of finite type. Suppose the Levi form of all boundary points of $\Omega$ has rank at least $d-1$ and let $A\geq1, B\geq0$. If $\xi,\eta\in\partial\Omega$ and $V_\xi$ and $V_\eta$ are neighborhoods of $\xi$, $\eta$ in $\overline{\Omega}$ so that $\overline{V_\xi}\cap\overline{V_\eta}=\emptyset$ then there exists a compact set $K\subset\Omega$ with the following property: if $\sigma\colon[0,T]\to\Omega$ is an $(A,B)$ quasi-geodesic with $\sigma(0)\in V_\xi$ and $\sigma(T)\in V_\eta$ then $\sigma\cap K\neq\emptyset$.
\end{thm}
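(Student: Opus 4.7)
The plan is to deduce Theorem~\ref{thm6} directly from the visibility theorem for Goldilocks domains of Bharali--Zimmer, so the only real content is to verify that the class of domains under consideration falls into the Goldilocks class in the sense of \cite{zimmer2017goldilocks}. Once that is done, the conclusion (existence of a compact set $K$ through which every $(A,B)$ quasi-geodesic with endpoints in $V_\xi,V_\eta$ must pass) is exactly the conclusion of their visibility theorem.

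To check the Goldilocks property I would use the sharp local Kobayashi metric estimate from Theorem~\ref{thm3}. For each $u\in\partial\Omega$, that estimate gives a neighborhood $W_u$ and a lower bound of the form $F_\Omega(z;X)\gtrsim\|X\|/|r(z)|^{1/(2m_u)}$, where $2m_u$ is the (finite) local D'Angelo type. By compactness of $\partial\Omega$ and upper-semicontinuity of the type for Levi corank one domains, I can cover $\partial\Omega$ by finitely many such neighborhoods with a uniform exponent $2M$, producing
\[
F_\Omega(z;X)\;\geq\;c\,\frac{\|X\|}{|r(z)|^{1/(2M)}}
\]
for all $z$ sufficiently close to $\partial\Omega$ and all $X\in\mathbb{C}^{d+1}$. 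With this bound the Bharali--Zimmer integrability quantity $\int_0^{r_0}M(\Omega,\epsilon)\,d\epsilon/\epsilon$ behaves like $\int_0^{r_0}\epsilon^{1/(2M)-1}\,d\epsilon$ and converges, which is the first half of the Goldilocks definition. The complementary Kobayashi-distance growth condition is a standard consequence of the same family of estimates (it reduces to the logarithmic upper bound $d_\Omega(z,o)\lesssim\log(1/|r(z)|)$, which one reads off from Theorem~\ref{thm3} by integrating along the inward normal, as in Lemma~\ref{lem6}). Thus $\Omega$ is Goldilocks, and \cite[Theorem~1.4]{zimmer2017goldilocks} applies.

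The main (minor) obstacle is purely bookkeeping: making sure that the uniform type exponent $2M$ exists across $\partial\Omega$ and that the resulting metric/distance bounds match the precise formulation of the Goldilocks definition in \cite{zimmer2017goldilocks}. Once the two integrability/growth conditions are in hand, the visibility conclusion itself requires no new work and is quoted verbatim. In particular, the compact set $K$ and its dependence on the disjoint closed neighborhoods $\overline{V_\xi}$ and $\overline{V_\eta}$ are those furnished by the cited theorem, and nothing further about the Levi corank one structure is required at this stage.
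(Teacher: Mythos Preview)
Your proposal is correct and is essentially the same approach as the paper: the paper does not prove Theorem~\ref{thm6} at all but simply quotes it from \cite[Theorem~1.4]{zimmer2017goldilocks}, after asserting in one line that bounded pseudo-convex domains of finite type are Goldilocks domains. You go slightly further by sketching how the Goldilocks conditions follow from the metric estimate in Theorem~\ref{thm3}, but this is just filling in what the paper leaves to the citation; no different idea is involved.
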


\begin{cor}\label{cor3}
Let $\Omega\subset\mathbb{C}^{d+1}$ be a smoothly bounded pseudo-convex domain of finite type and the Levi form of every boundary points of $\Omega$ has rank at least $d-1$. Let $p_n,q_n\in\Omega$ be two sequences with $p_n\to\xi^+\in\partial\Omega$ and $q_n\to\xi^-\partial\Omega$, and
\[
\lim_{n,m\to\infty}(p_n|q_m)_o=\infty,
\]
then $\xi^+=\xi^-$ (here the Gromov product is with respect to the Kobayashi distance).
\end{cor}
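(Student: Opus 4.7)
The plan is to argue by contradiction via the visibility theorem (Theorem~\ref{thm6}), in essentially the same spirit as the proof of Corollary~\ref{cor2} above, but inside the original domain $\Omega$ rather than in the limit model $\widehat{\Omega}$.

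Suppose for contradiction that $\xi^+\neq\xi^-$. Since $\xi^\pm\in\partial\Omega$ and $\Omega$ is bounded, I choose disjoint neighborhoods $V_{\xi^+},V_{\xi^-}$ in $\overline{\Omega}$ with $\overline{V_{\xi^+}}\cap\overline{V_{\xi^-}}=\emptyset$. Restricting to $m=n$ in the hypothesis still gives $(p_n|q_n)_o\to\infty$, and for $n$ large enough $p_n\in V_{\xi^+}$ and $q_n\in V_{\xi^-}$. Next I connect $p_n$ to $q_n$ by a curve that the visibility theorem can be applied to: since $\Omega$ is a bounded Goldilocks domain, $(\Omega,d_\Omega)$ is complete and proper, hence (by Hopf--Rinow for length spaces) a geodesic metric space, so I pick a Kobayashi geodesic $\sigma_n\colon[0,T_n]\to\Omega$ with $\sigma_n(0)=p_n$ and $\sigma_n(T_n)=q_n$. (If one prefers to avoid invoking Hopf--Rinow, $(1,\epsilon)$-almost-geodesics always exist and work with a harmless additive constant.)

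Applying Theorem~\ref{thm6} with $A=1$, $B=0$ to the neighborhoods $V_{\xi^+},V_{\xi^-}$ yields a compact set $K\subset\Omega$, depending only on these neighborhoods, such that $\sigma_n\cap K\neq\emptyset$ for every sufficiently large $n$. Pick $t_n\in[0,T_n]$ with $w_n:=\sigma_n(t_n)\in K$. Because $\sigma_n$ is a geodesic,
\[
d_\Omega(p_n,q_n)=d_\Omega(p_n,w_n)+d_\Omega(w_n,q_n),
\]
so the triangle inequality applied to $o$ and $w_n$ gives
\[
2(p_n|q_n)_o=d_\Omega(p_n,o)+d_\Omega(q_n,o)-d_\Omega(p_n,w_n)-d_\Omega(w_n,q_n)\leq 2\,d_\Omega(o,w_n).
\]
Compactness of $K$ yields $\sup_n d_\Omega(o,w_n)<\infty$, so $\{(p_n|q_n)_o\}$ is bounded, contradicting $(p_n|q_n)_o\to\infty$. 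Hence $\xi^+=\xi^-$.

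There is no real obstacle once Theorem~\ref{thm6} is in hand: the only mild point to handle cleanly is the existence of actual Kobayashi geodesics between arbitrary pairs in $\Omega$, which I would resolve either via the Hopf--Rinow argument above or by replacing geodesics with $(1,\epsilon_n)$-almost-geodesics (with $\epsilon_n\to 0$), in which case the identity $d_\Omega(p_n,q_n)=d_\Omega(p_n,w_n)+d_\Omega(w_n,q_n)$ is replaced by an inequality off by $3\epsilon_n$, which is still enough to contradict $(p_n|q_n)_o\to\infty$.
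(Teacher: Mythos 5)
Your proposal is correct and follows essentially the same route as the paper: take Kobayashi geodesics $\sigma_n$ from $p_n$ to $q_n$, invoke the visibility Theorem~\ref{thm6} to produce a point $w_n=\sigma_n(t_n)$ in a fixed compact set $K$, and use the geodesic identity to bound $(p_n|q_n)_o\leq d_\Omega(o,w_n)\leq\max\{d_\Omega(o,x):x\in K\}$, contradicting the divergence of the Gromov products. Your extra care about the existence of geodesics (via completeness and Hopf--Rinow, or almost-geodesics) is a reasonable refinement of a point the paper leaves implicit, but the argument is the same.
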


\begin{proof}
The proof is the same as \cite[Corollary~6.2.]{Fiacchi2020Gromov}. Assume for a contradiction that $\xi^+\neq\xi^-$ and consider $\sigma_n\colon[a_n,b_n]\to\Omega$ be a sequence of Kobayashi geodesics such that $\sigma_n(a_n)=p_n$ and $\sigma_n(b_n)=q_n$. By the visibility theorem, there exists a compact subset $K\subset\Omega$ and $T_n\in[a_n,b_n]$ such that $\sigma_n(T_n)\in K$. Thus
\[
(p_n|q_n)_o=(\sigma_n(a_n)|\sigma_n(b_n))_o\leq d_{\Omega}(\sigma_n(T_n),o)\leq\max\{d_{\Omega}(x,o) : x\in K\}<\infty
\]
which is a contradiction.
\end{proof}

\begin{prop}\label{prop1}
Let $\Omega\subset\mathbb{C}^{d+1}$ be a smoothly bounded pseudo-convex domain of finite type and the Levi form of every boundary points of $\Omega$ has rank at least $d-1$. Let $\sigma\colon\mathbb{R}\to\Omega$ be a Kobayashi geodesic line. Then both
\[
\lim_{t\to-\infty}\sigma(t),\qquad\lim_{t\to\infty}\sigma(t)
\]
exist in $\partial\Omega$ and they are distinct.
\end{prop}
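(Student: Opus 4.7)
The plan is to prove separately that the two limits $\xi^\pm := \lim_{t\to\pm\infty}\sigma(t)$ exist in $\partial\Omega$, and then that they are distinct.

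\textbf{Existence of the limits.} Fix $t_n\to+\infty$. Compactness of $\overline{\Omega}$ yields a subsequential limit $\xi\in\overline{\Omega}$, and since $\Omega$ is complete Kobayashi hyperbolic (bounded pseudo-convex of finite type) while $d_{\Omega}(\sigma(0),\sigma(t_n))=t_n\to\infty$, the limit must lie in $\partial\Omega$. To eliminate two different subsequential limits $\xi_1,\xi_2$ coming from $t_n,s_n\to+\infty$, the Kobayashi-geodesic identity gives
\[
\big(\sigma(t_n)\mid\sigma(s_n)\big)_{\sigma(0)}=\tfrac{1}{2}\bigl(t_n+s_n-|t_n-s_n|\bigr)=\min(t_n,s_n)\to\infty,
\]
so Corollary~\ref{cor3} forces $\xi_1=\xi_2$. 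Hence $\xi^+\in\partial\Omega$ is well defined, and symmetrically $\xi^-\in\partial\Omega$.

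\textbf{Setup for distinctness.} Suppose for contradiction that $\xi:=\xi^+=\xi^-$. Apply Theorem~\ref{thm5} at $u_n:=\sigma(n)\to\xi$ to obtain scaling maps $\psi_n$ and the limit domain $\widehat{\Omega}$ with $v_n:=\psi_n(\sigma(n))\to(-1,{}'0)$. Shift the parametrization by setting $\sigma_n(t):=\sigma(t+n)$ on $[-n,n]$ and $\tilde{\sigma}_n:=\psi_n\circ\sigma_n$, a Kobayashi geodesic in $\psi_n(\Omega)$ based at $v_n$. For fixed $T>0$ and $n>T$, the image $\tilde{\sigma}_n([-T,T])$ lies in $\mathbf{B}_{\psi_n(\Omega)}(v_n,T)$, and by Lemma~\ref{lem3} these Kobayashi balls are uniformly compactly contained in $\widehat{\Omega}$. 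Combined with the uniform Lipschitz bound from Corollary~\ref{cor1} and the stability of the Kobayashi distance (Theorem~\ref{thm1}), Arzelà--Ascoli together with a diagonal argument produces, along a subsequence, a Kobayashi geodesic line $\tilde{\sigma}\colon\mathbb{R}\to\widehat{\Omega}$ with $\tilde{\sigma}_n\to\tilde{\sigma}$ locally uniformly.

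\textbf{Toward a contradiction.} By Corollary~\ref{prop7} the limits $\tilde{\sigma}(\pm\infty)$ exist in $\overline{\mathbb{C}^{d+1}}$ and are distinct. By Corollary~\ref{cor4} applied at each endpoint of $[-n,n]$ (after a further subsequence ensuring convergence in $\overline{\mathbb{C}^{d+1}}$),
\[
\tilde{\sigma}(-\infty)=\lim_{n\to\infty}\psi_n(\sigma(0)),\qquad\tilde{\sigma}(+\infty)=\lim_{n\to\infty}\psi_n(\sigma(2n)).
\]
Writing $\psi_n=\Lambda_n\circ\Phi_n$, the biholomorphism $\Phi_n$ keeps the fixed interior point $\sigma(0)$ at bounded, nonvanishing Euclidean distance from its image, while the dilation $\Lambda_n$ expands the normal direction by $1/\epsilon_n\to\infty$. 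Hence $\psi_n(\sigma(0))\to\infty$ and $\tilde{\sigma}(-\infty)=\infty$.

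\textbf{Main obstacle.} The remaining and crucial step will be to force $\tilde{\sigma}(+\infty)=\infty$ as well, producing a Kobayashi (equivalently, Catlin) geodesic in $\widehat{\Omega}$ with both asymptotic limits equal to $\infty$, in direct contradiction with the impossibility established immediately before Corollary~\ref{prop7}. The idea is to exploit the forward--backward symmetry of the geodesic $\sigma$ about $u_n=\sigma(n)$ (the points $\sigma(0)$ and $\sigma(2n)$ both lie at Kobayashi distance $n$ from $u_n$), and to compare the scaling based at $\sigma(n)$ with the twin scaling based at $u_n':=\sigma(-n)\to\xi$. Matching the two pairs of scaled endpoints via Corollary~\ref{cor4} and the metric estimate Lemma~\ref{lem4} then forces $\tilde{\sigma}(+\infty)=\infty$, closing the contradiction and establishing $\xi^+\neq\xi^-$.
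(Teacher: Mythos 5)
Your existence argument (Gromov product along the geodesic plus Corollary~\ref{cor3}) is exactly the paper's and is fine; the extraction of a limit geodesic $\tilde{\sigma}$ after scaling at $u_n=\sigma(n)$ is also sound. The genuine gap is precisely the step you yourself flag as the ``main obstacle'': nothing forces $\tilde{\sigma}(+\infty)=\lim_n\psi_n(\sigma(2n))$ to be $\infty$, and in fact this is false in general. The scaling data $\epsilon_n$, $\tau_k(\xi_n,\epsilon_n)$ are adapted to $\sigma(n)$, while the Euclidean rate at which $\sigma(2n)$ approaches $\xi$ is not controlled by the Kobayashi parametrization; in the model situation of a geodesic entering $\xi$ along the inner normal one has $\delta(\sigma(2n))\approx\epsilon_n^2$, so $\psi_n(\sigma(2n))\to 0\in\partial\widehat{\Omega}$, a \emph{finite} boundary point. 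Then $\tilde{\sigma}$ is a perfectly well-behaved geodesic joining $\infty$ to a finite boundary point, and no contradiction with Corollary~\ref{prop7} (or the lemma preceding it) arises. Note also that your construction only uses the forward ray $\sigma|_{[0,2n]}$, so it cannot by itself contradict anything: a geodesic ray landing at $\xi$ always exists. The hypothesis $\xi^-=\xi^+$ must enter, and the ``forward--backward symmetry'' you invoke is a symmetry of Kobayashi distances only ($d_\Omega(\sigma(n),\sigma(0))=d_\Omega(\sigma(n),\sigma(2n))=n$), which gives no information in the rescaled Euclidean picture; the twin scaling at $\sigma(-n)$ produces a different sequence of maps $\psi_n'$, and no mechanism is given (and none is apparent) for ``matching'' the two scaled pictures.

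The paper's proof sidesteps this by using a single scaling adapted to the boundary point $\xi$ itself rather than to the geodesic: it takes $u_n=(-1/n,'0)$ (so $u_\infty=\xi$), splits $\sigma$ into its two tails $\sigma^{\pm}$, both of which tend to $\xi$, and observes that for each $n$ the scaled tails $\psi_n\circ\sigma^{\pm}$ run from points blowing up to $\infty$ (the images of the fixed points $\sigma(\tau^{\pm})$) into the single point $0=\psi_n(\xi)$. Proposition~\ref{prop4} applied to each tail yields times $T_n^+\to+\infty$, $T_n^-\to-\infty$ and limit geodesics $\hat{\sigma}^{\pm}$ in $\widehat{\Omega}$, and then the stability of the distances (Theorem~\ref{thm1}) gives $d_{\widehat{\Omega}}(\hat{\sigma}^+(0),\hat{\sigma}^-(0))=\lim_n d_{\Omega}(\sigma(T_n^+),\sigma(T_n^-))=\lim_n|T_n^+-T_n^-|=\infty$, a contradiction. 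The key points you are missing are (a) both tails must be scaled by the \emph{same} maps so they can be compared, and (b) the contradiction comes from the distance stability between the two tails, not from the behavior at infinity of a single limit geodesic; Corollary~\ref{prop7} is not needed here at all. To salvage your route you would need an a priori lower bound on $\|\psi_n(\sigma(2n))\|$, which, as the radial example shows, is unavailable.
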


\begin{proof}
By Corollary~\ref{cor3} the existence of the limits is the same as Lemma~\ref{lem10} (here we use the Gromov product with respect to the Kobayashi distance). Now we show they are different. Assume for a contradiction that there exists a Kobayashi geodesic $\sigma\colon\mathbb{R}\to\Omega$ such that $\lim_{t\to\infty}\sigma(t)=\lim_{t\to-\infty}\sigma(t)=\xi\in\partial\Omega$. We may suppose that $\xi$ is the origin. Let $R>0$ be a constant such that the Kobayashi metric $F_\Omega$ is comparable with $M_r$ on $\Omega\cap\mathbb{B}(0,R)$ (Theorem~\ref{thm3}). Now set
\begin{align*}
\tau^+&:=\inf\{t\in[0,\infty) : \sigma([t,\infty))\subset \mathbb{B}(0,R/2)\}\\
\tau^-&:=\sup\{t\in(-\infty,0] : \sigma((-\infty,t])\subset\mathbb{B}(0,R/2)\}
\end{align*}
and denote by
\[
\sigma^+:=\sigma|_{[\tau^+,\infty)}\quad\text{and}\quad\sigma^-:=\sigma|_{(-\infty,\tau^-]}.
\]

Now let $\{\psi_n\}$, $P\colon\mathbb{C}\to\mathbb{R}$ be as in Theorem~\ref{thm5} (where we set $u_n=(-1/n,'0)$ and $u_\infty=\xi$) and denote by
\[
\tilde{\sigma}_n^+:=\psi_n\circ\sigma^+\quad\text{and}\quad\tilde{\sigma}_n^-:=\psi_n\circ\sigma^-.
\]
Notice that
\[
\lim_{t\to\infty}\tilde{\sigma}_n^+(t)=\lim_{t\to-\infty}\tilde{\sigma}_n^-(t)=0
\]
for each $n\in\mathbb{C}$ and
\[
\lim_{n\to\infty}\tilde{\sigma}_n^+(\tau^+)=\lim_{n\to\infty}\tilde{\sigma}_n^-(\tau^-)=\infty.
\]
Thus the hypotheses of Proposition~\ref{prop4} are satisfied and consequently, there exists $T_n^+>0$ and $T_n^-<0$ such that $t\mapsto\tilde{\sigma}_n^+(t+T_n^+)$ and $t\mapsto\tilde{\sigma}_n^-(t+T_n^-)$ converges locally uniformly to geodesics $\hat{\sigma}^+\colon\mathbb{R}\to\widehat{\Omega}$ and $\hat{\sigma}^-\colon\mathbb{R}\to\widehat{\Omega}$, respectively.

Now for fixed $t_0\geq \tau^+$, $\lim_{n\to\infty}\tilde{\sigma}_n^+(t_0)\to0\in\partial\widehat{\Omega}$, we must have $T_n^+\to\infty$ since $\tilde{\sigma}_n(T_n^+)\to\hat{\sigma}^+(0)\in\widehat{\Omega}$ and similarly $T_n^-\to-\infty$. It follows by Theorem~\ref{thm1} that
\begin{align*}
d_{\widehat{\Omega}}(\hat{\sigma}^+(0),\hat{\sigma}^-(0))&=\lim_{n\to\infty}d_{\Omega_n}(\tilde{\sigma}^+_n(T_n^+),
\tilde{\sigma}_n^-(T_n^-))\\
&=\lim_{n\to\infty}d_{\Omega}(\sigma(T_n^+),\sigma(T_n^-))\\
&=\lim_{n\to\infty}|T_n^+-T_n^-|=\infty
\end{align*}
which is a contradiction and the proof is complete.
\end{proof}

Now we are ready to give the proof of the main theorem. The procedure is similar to the proof of \cite[Theorem~1.1]{Fiacchi2020Gromov} and we present it for completeness.

\begin{proof}[Proof of Theorem~\ref{thm0}]
Assume for a contradiction that $(\Omega,d_\Omega)$ is not Gromov hyperbolic with respect to the Kobayashi metric. Then there exist three sequences of points $\{x_n\}_{n\in\mathbb{N}}$, $\{y_n\}_{n\in\mathbb{N}}$, $\{z_n\}_{n\in\mathbb{N}}$ and Kobayashi geodesic segments $\sigma_{x_n,y_n}$, $\sigma_{y_n,z_n}$, $\sigma_{x_n,z_n}$ connecting them, and a point $u_n\in\sigma_{x_n,y_n}$ such that
\[
d_{\Omega}(u_n,\sigma_{x_n,z_n}\cup\sigma_{y_n,z_n})\geq n.
\]
Passing to a subsequence, we may assume that $u_n\to u_\infty\in\overline{\Omega}$.

\noindent\textbf{Case 1:} $u_\infty\in\Omega$. We can assume that $x_n$, $y_n$, $z_n$ converges to $x_\infty$, $y_\infty$, $z_\infty\in\overline{\Omega}$, respectively. Since
\[
d_{\Omega}(u_n,\{x_n,z_n,y_n\})\geq d_{\Omega}(u_n,\sigma_{x_n,z_n}\cup\sigma_{y_n,z_n})\geq n,
\]
we must have $x_\infty,y_\infty,z_\infty\in\partial\Omega$. After a re-parametrization we can assume that $\sigma_{x_n,y_n}(0)=u_n$. By the Arzel\`a-Ascoli theorem, $\sigma_{x_n,y_n}$ converges uniformly on compact subsets to a geodesic $\sigma\colon\mathbb{R}\to{\Omega}$. The previous proposition confirms that $x_\infty\neq y_\infty$, and thus one of them is different from $z_\infty$. Without loss of generality we assume that $x_\infty\neq z_\infty$. Now by the visibility Theorem~\ref{thm6} there exist a compact $K\subset\Omega$ and $T_n\in\mathbb{R}$ with the property $\sigma_{x_n,z_n}(T_n)\in K$. Thus
\begin{align*}
\max\{d_{\Omega}(u_\infty,x) : x\in K\}&\geq\lim_{n\to\infty}d_{\Omega}(\sigma(0),\sigma_{x_n,z_n}(T_n))\\
&=\lim_{n\to\infty}d_{\Omega}(\sigma_{x_n,y_n}(0),\sigma_{x_n,z_n}(T_n))\\
&\geq\lim_{n\to\infty}d_{\Omega}(u_n,\sigma_{x_n,z_n}\cup\sigma_{y_n,z_n})=\infty
\end{align*}
which is a contradiction.

\noindent\textbf{Case 2:} $u_\infty\in\partial\Omega$. In this case let $R>0$ be number such that the Kobayashi metric $F_\Omega$ is comparable with $M_r$ for $z\in\Omega\cap\mathbb{B}(0,R)$ in Theorem~\ref{thm3}. After a re-parametrization we assume that $\sigma_{x_n,y_n}(0)=u_n$ and let $\psi_n$, $P\colon\mathbb{C}\to\mathbb{R}$ be as in Theorem~\ref{thm5}, and after a subsequence, suppose that $\psi_n(x_n)$, $\psi_n(y_n)$, $\psi_n(z_n)$ converge to $\hat{x}_\infty$, $\hat{y}_\infty$, $\hat{z}_\infty$ in $\overline{\widehat{\Omega}}\cup\{\infty\}$, respectively. In this case we have $\psi_n\circ\sigma_{x_n,y_n}(0)=v_n\to(-1,'0)\in\widehat{\Omega}$.

\textbf{Subcase 1:} If both $\hat{x}_\infty$ and $\hat{y}_\infty$ are in $\partial\widehat{\Omega}$, then $\sigma_{x_n,y_n}\subset \mathbb{B}(0,R)$ for all $n$ large enough. It follows by Arzel\`a-Ascoli theorem and Theorem~\ref{thm1} that $\psi_n\circ\sigma_{x_n,y_n}$ converges to a geodesic $\hat{\sigma}\colon\mathbb{R}\to\widehat{\Omega}$. Now Proposition~\ref{prop7} implies that geodesics in $\widehat{\Omega}$ are well behaved and thus $\hat{x}_\infty\neq\hat{y}_\infty$, thus one them is different from $\hat{z}_\infty$, and we assume that $\hat{x}_\infty\neq\hat{z}_\infty$.

Now if $\sigma_{x_n,z_n}$ is defined on the interval $[a_n,b_n]$ such that $\sigma_{x_n,z_n}(a_n)=x_n$ and $\sigma_{x_n,z_n}(b_n)=z_n$, define
\[
t_n=\sup\{t\in[a_n,b_n] :\sigma_{x_n,z_n}([a_n,t_n])\subset\mathbb{B}(0,R/2)\}
\]
and set $z_n':=\sigma_{x_n,z_n}(t_n)$. Consequently we have $\lim_{n\to\infty}\psi_n(z_n')=\lim_{n\to\infty}\psi_n(z_n)=\hat{z}_\infty$ and the geodesic segment $\sigma_{x_n,z_n'}$ is contained in $\mathbb{B}(0,R)$. Now Proposition~\ref{prop4} indicates that $\psi\circ\sigma_{x_n,z_n'}$ converges locally uniformly to the geodesic $\hat{\sigma}\colon\mathbb{R}\to\widehat{\Omega}$. Thus it follows by Theorem~\ref{thm1} that
\begin{align*}
d_{\widehat{\Omega}}((-1,'0),\hat{\sigma}(0))&=\lim_{n\to\infty}d_{\Omega_n}(\psi_n(u_n),\psi_n\circ\sigma_{x_n,z_n'}
(0))\\
&=\lim_{n\to\infty}d_{\Omega}(u_n,\sigma_{x_n,z_n'}(0))\\
&\geq\lim_{n\to\infty}d_{\Omega}(u_n,\sigma_{x_n,z_n}\cup\sigma_{y_n,z_n})=\infty.
\end{align*}

\textbf{Subcase 2:} Now we consider the case where $\hat{x}_\infty\in\partial\widehat{\Omega}$ is finite and $\hat{y}_\infty=\infty$. If $\hat{z}_\infty=\infty$, we can find a $z_n''\in\sigma_{x_n,z_n}$ such that $\sigma_{x_n,z_n''}\subset\mathbb{B}(0,R/2)$ and get a contradiction by the same argument as in Subcase~1. If $\hat{z}_\infty\in\partial\widehat{\Omega}$ is finite, we can find $y_n'\in\sigma_{y_n,z_n}$ such that $\psi_n\circ\sigma_{y'_n,z_n}$ converges to a geodesic in $\widehat{\Omega}$, and use the similar argument in Subcase~1 and we obtain a contradiction again.
\end{proof}

Let $\mathcal{A}$ be the set of geodesic rays from a fixed point $o\in\Omega$. Two geodesics $\sigma_1$, $\sigma_2\in\mathcal{A}$ are equivalent $\sigma_1\sim\sigma_2$ if
\[
\sup_{t\geq0}d_{\Omega}(\sigma_1(t),\sigma_2(t))<\infty
\]
and we define the \textit{Gromov boundary} of $\Omega$ as $\partial^G\Omega:=\mathcal{A}/\sim$. Then we have the following extension theorem:
\begin{thm}
Let $\Omega\subset\mathbb{C}^{d+1}$ be a smoothly bounded pseudo-convex domain of finite type. Suppose the Levi form of every boundary point has rank at least $d-1$. Then the identity map $\Omega\to\Omega$ extends to a homomorphism $\Omega\cup\partial^G\Omega\to\overline{\Omega}$.
\end{thm}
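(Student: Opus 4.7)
The plan is to define $\Phi\colon\Omega\cup\partial^G\Omega \to \overline{\Omega}$ by $\Phi|_\Omega = \mathrm{id}$ and $\Phi([\sigma]) = \lim_{t\to\infty}\sigma(t)$, which lies in $\partial\Omega$ by Proposition~\ref{prop1}. Well-definedness on equivalence classes follows from visibility: if $\sigma_1 \sim \sigma_2$ had distinct topological limits $\xi_1\neq\xi_2$, pick disjoint neighborhoods $V_{\xi_i}\subset\overline{\Omega}$; then any geodesic joining $\sigma_1(t)\in V_{\xi_1}$ to $\sigma_2(t)\in V_{\xi_2}$ has Kobayashi length $\leq M := \sup_t d_\Omega(\sigma_1(t),\sigma_2(t)) < \infty$, yet by Theorem~\ref{thm6} must meet a fixed compact $K\Subset\Omega$, and $d_\Omega(\sigma_1(t),K)\to\infty$ yields a contradiction. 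Surjectivity onto $\partial\Omega$ is the dual construction: for $\xi\in\partial\Omega$ choose $p_n\to\xi$ and geodesic segments $\sigma_n$ from $o$ to $p_n$; since Kobayashi geodesics in the bounded domain $\Omega$ are uniformly Euclidean-Lipschitz on compact sets, Arzel\`a--Ascoli produces a limit ray $\sigma\colon[0,\infty)\to\Omega$. If its topological limit $\xi'$ differed from $\xi$, a second application of Theorem~\ref{thm6} to the tail $\sigma_n|_{[t_0,T_n]}$ (going from $V_{\xi'}$ to $V_\xi$) would force $\sigma_n(s_n)\in K'$ for some $s_n\in[t_0,T_n]$; the constraint $s_n\leq\max_{x\in K'}d_\Omega(o,x)$ contradicts $s_n\geq t_0$ as soon as $t_0$ is chosen above this bound.

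Injectivity on $\partial^G\Omega$ is the main obstacle. Let $\sigma_1,\sigma_2$ be rays from $o$ with common topological limit $\xi$, and suppose $d_\Omega(\sigma_1(t_n),\sigma_2(t_n))\to\infty$ along some $t_n\to\infty$. The argument splits on the Gromov product $(\sigma_1(t_n)|\sigma_2(t_n))_o$. If it is unbounded along a further subsequence, the standard Bridson--Haefliger fact for $\delta$-hyperbolic spaces (applied to Theorem~\ref{thm0}) gives that $\sigma_1$ and $\sigma_2$, having the same point in the Gromov boundary, lie at uniformly bounded Kobayashi distance, contradicting the hypothesis. Otherwise, passing to a subsequence with $(\sigma_1(t_n)|\sigma_2(t_n))_o\leq C$, let $\gamma_n\colon[0,L_n]\to\Omega$ be a geodesic joining $\sigma_1(t_n)$ to $\sigma_2(t_n)$; by $\delta$-hyperbolicity there exists $s_n\in[0,L_n]$ with $\gamma_n(s_n)$ in the compact set $K:=\overline{\mathbf{B}_\Omega(o,C+\delta)}$, and the triangle inequality $t_n = d_\Omega(o,\sigma_1(t_n))\leq d_\Omega(o,\gamma_n(s_n)) + s_n$ forces $s_n\geq t_n-C-\delta\to\infty$ and similarly $L_n-s_n\to\infty$. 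Reparametrizing $\hat\gamma_n(s):=\gamma_n(s+s_n)$ and applying Arzel\`a--Ascoli (with midpoints trapped in $K$), extract a Kobayashi geodesic line $\hat\gamma\colon\mathbb{R}\to\Omega$; repeating the visibility argument from the surjectivity step on both tails of $\hat\gamma_n$ identifies $\lim_{s\to\pm\infty}\hat\gamma(s)=\xi$, contradicting the distinct-endpoints conclusion of Proposition~\ref{prop1}.

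Continuity of $\Phi$ and of $\Phi^{-1}$ then follows from the same machinery. If $[\sigma_n]\to[\sigma]$ in $\partial^G\Omega$, pass to basepoint-$o$ representatives, extract an Arzel\`a--Ascoli locally uniform limit, and apply the well-definedness mechanism to match topological limits in $\partial\Omega$. Conversely, if $\xi_n\to\xi$ in $\partial\Omega$, the surjectivity construction produces rays $\sigma_n$ landing at $\xi_n$ whose locally uniform limit lands at $\xi$, and the injectivity argument identifies the corresponding classes in $\partial^G\Omega$; the map is then a continuous bijection between compact Hausdorff spaces, hence a homeomorphism.
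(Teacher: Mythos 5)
Your argument is correct in substance, but it takes a genuinely different route from the paper's. The paper settles this theorem by appealing to Corollary~\ref{cor4} and Proposition~\ref{prop1} and then following Fiacchi's proof verbatim, i.e.\ it matches Gromov-boundary points with Euclidean boundary points through the scaling machinery (locally uniform limits of rescaled quasi-geodesics and the identification of their endpoints). You instead work directly in $\Omega$: landing of rays via the Gromov-product criterion, well-definedness and surjectivity via the visibility Theorem~\ref{thm6}, and injectivity via a dichotomy on $(\sigma_1(t_n)|\sigma_2(t_n))_o$ that uses the $\delta$-hyperbolicity furnished by Theorem~\ref{thm0} together with properness of $(\Omega,d_\Omega)$: either the rays are asymptotic in the hyperbolic sense and hence uniformly close (the standard Bridson--Haefliger fact), or connecting geodesics subconverge to a geodesic line both of whose ends land at the same boundary point, contradicting Proposition~\ref{prop1}. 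This buys a proof that avoids Corollary~\ref{cor4} and the second scaling step altogether, at the price of leaning more heavily on Theorem~\ref{thm0} and on standard facts about proper Gromov hyperbolic spaces. Two loose ends you should tighten: the existence of $\lim_{t\to\infty}\sigma(t)$ for a geodesic \emph{ray} should be justified by Corollary~\ref{cor3} (Proposition~\ref{prop1} as stated concerns geodesic lines, though the argument is identical), and before invoking the continuous-bijection-from-a-compact-space argument you must also check continuity along mixed sequences, i.e.\ that $x_n\in\Omega$ converging in $\Omega\cup\partial^G\Omega$ to a class $[\sigma]$ satisfies $x_n\to\Phi([\sigma])$ in $\overline{\Omega}$; this again follows from Corollary~\ref{cor3} and the visibility theorem, but it is not covered by the two cases you actually wrote down.
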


\begin{proof}
By Corollary~\ref{cor4} and Proposition~\ref{prop1} the proof is the same as the proof of \cite[Theorem~1.1.]{Fiacchi2020Gromov} and we omit it.
\end{proof}



\end{document}